\numberwithin{equation}{section}
\numberwithin{figure}{section}
  \theoremstyle{remark}
  \newtheorem*{rem*}{\protect\remarkname}
\theoremstyle{plain}
\newtheorem{thm}{\protect\theoremname}[section]
  \theoremstyle{plain}
  \newtheorem{lem}[thm]{\protect\lemmaname}
  \theoremstyle{definition}
  \newtheorem{defn}[thm]{\protect\definitionname}
  \theoremstyle{plain}
  \newtheorem{prop}[thm]{\protect\propositionname}
  \theoremstyle{plain}
  \newtheorem{cor}[thm]{\protect\corollaryname}
  \theoremstyle{definition}
  \newtheorem{problem}[thm]{\protect\problemname}
  \providecommand{\corollaryname}{Corollary}
  \providecommand{\definitionname}{Definition}
  \providecommand{\lemmaname}{Lemma}
  \providecommand{\problemname}{Problem}
  \providecommand{\propositionname}{Proposition}
  \providecommand{\remarkname}{Remark}
\providecommand{\theoremname}{Theorem}
\begin{document}

\title{Sectional curvature-type conditions on metric spaces}

\author{Martin Kell}

\date{\today}

\address{Mathematisches Institut, Universität Tübingen, Tübingen, Germany}

\email{martin.kell@math.uni-tuebingen.de}
\begin{abstract}
In the first part Busemann concavity as non-negative curvature is
introduced and a bi-Lipschitz splitting theorem is shown. Furthermore,
if the Hausdorff measure of a Busemann concave space is non-trivial
then the space is doubling and satisfies a Poincar\'e condition and
the measure contraction property. Using a comparison geometry variant
for general lower curvature bounds $k\in\mathbb{R}$, a Bonnet-Myers
theorem can be proven for spaces with lower curvature bound $k>0$.

In the second part the notion of uniform smoothness known from the
theory of Banach spaces is applied to metric spaces. It is shown that
Busemann functions are (quasi-)convex. This implies the existence
of a weak soul. In the end properties are developed to further dissect
the soul.
\end{abstract}

\thanks{Part of this research was conducted while the author was an EPDI-PostDoc
at the IH\'ES. He wants to thank the IH\'ES for its hospitality
and financial support. The author is grateful to the referee for carefully
reading through the manuscript and for encouraging to prove a Bonnet-Myers
theorem.}

\maketitle
\global\long\def\diam{\operatorname{diam}}
\global\long\def\interior{\operatorname{int}}
\global\long\def\closure{\operatorname{cl}}
\global\long\def\Con{\operatorname{Con}}
\global\long\def\diam{\operatorname{diam}}

In order to understand the influence of curvature on the geometry
of a space it helps to develop a synthetic notion. Via comparison
geometry sectional curvature bounds can be obtained by demanding that
triangles are thinner or fatter than the corresponding comparison
triangles. The two classes are called $CAT(\kappa)$- and resp. $CBB(\kappa)$-spaces.
We refer to the book \cite{Bridson1999} and the forthcoming book
\cite{Alexander} (see also \cite{Burago1992,Otsu1997}). Note that
all those notions imply a Riemannian character of the metric space.
In particular, the angle between two geodesics starting at a common
point is well-defined. Busemann investigated a weaker notion of non-positive
curvature which also applies to normed spaces \cite[Section 36]{Busemann1955}.
A similar idea was developed by Pedersen \cite{Pedersen1952} (see
also \cite[(36.15)]{Busemann1955}). Pedersen's conditions is better
suited for the study of Hilbert geometries, see \cite{Kelly1958}.
In \cite{Ohta2007} Ohta studied even weaker convexity notion, called
$L$-convexity which can be seen as a relaxed form of Busemann's non-positive
curvature assumption.

In the recent year a synthetic notion of a lower bound on the Ricci
curvature was defined by Lott-Villani \cite{LV2009} and Sturm \cite{Sturm2006a}.
Surprisingly, their condition include also Finsler manifolds \cite{Ohta2009,Ohta2013}.
The notion of lower curvature bound in the sense of Alexandrov, i.e.
$CBB(\kappa)$-spaces, is compatible with this Ricci bound \cite{Petrunin2010,Gigli2013}.
However, by now there is no known sectional curvature analogue for
Finsler manifolds which is compatible to Ohta's Ricci curvature bounds
and thus the synthetic Ricci bounds. 

In this note we present two approaches towards a sectional curvature-type
condition. The first is the ``converse'' of Busemann's non-positive
curvature condition. This condition implies a bi-Lipschitz splitting
theorem, uniqueness of tangent cones and if the space admits a non-trivial
Hausdorff measure then it satisfies doubling and Poincar\'e conditions,
and even the measure contraction property. This approach rather focuses
on the generalized angles formed by two geodesics. Using ideas from
comparison geometry one can easily define general lower curvature
bounds and prove a Bonnet-Myers theorem if the lower bound is positive.

The second approach can be seen as a dual to the theory of uniformly
convex metric spaces which were studied in \cite{Ohta2007,Kuwae2013,Kell2014}.
We call this condition uniform smoothness. This rather weak condition
is only powerful in the large as any compact Finsler manifold is $2$-uniformly
smooth, see \cite[Corollary 4.4]{Ohta2008}. Nevertheless, if the
spaces is unbounded then Busemann functions associated to rays are
(quasi-)convex and the space has a weak soul. In order to match the
theory in the smooth setting we try to develop further assumptions
which imply existence of a retractions onto the soul and a more local
curvature assumption in terms of Gromov's characterization of non-negative
curvature \cite{Gromov1991}. 

\section{Preliminaries}

Throughout this manuscript let $(X,d)$ be a \emph{proper geodesic
metric space}, i.e. $(X,d)$ is a complete metric space such that
every bounded closed subset is compact and for every $x,y\in X$ there
is a continuous map $\gamma:[0,1]\to X$ with $\gamma(0)=x$, $\gamma(1)=y$
and 
\[
d(\gamma(t),\gamma(s))=|t-s|d(x,y).
\]
The map $\gamma$ will be called a (constant speed) \emph{geodesic}
connecting $x$ and $y$ which is parametrized by $[0,1]$. We say
that a continuous curve $\gamma:I\to X$ defined on some interval
$I\subset\mathbb{R}$ is a \emph{locally geodesic} if for all $t\in I$
there is an interval $I_{t}\subset I$ with $t\in\interior I_{t}$
such that $\gamma$ restricted to $I_{t}$ is a constant speed geodesic.

We say that $(X,d)$ is \emph{non-branching} if for all geodesics
$\gamma,\eta:[0,1]\to X$ with $\gamma(0)=\eta(0)$ and $\gamma(t)=\eta(t)$
for some $t\in(0,1)$ it holds $\gamma(t)=\eta(t)$ for all $t\in[0,1]$.
In other words two geodesic start at the same point and intersect
in the middle must agree.

\subsection{Notions of convexity }

A function $f:X\to\mathbb{R}$ is said to be (geodesically) \emph{convex}
if $t\mapsto f(\gamma(t))$ is convex for all geodesics $\gamma:[0,1]\to X$,
i.e. 
\[
f(\gamma(t))\le(1-t)f(\gamma(0))+tf(\gamma(1)).
\]
We say that $f$ is \emph{$p$-convex} if $f^{p}$ is convex. Furthermore,
$f$ is \emph{concave} if $-f$ is convex, and if $f$ is both convex
and concave then it is said to be \emph{affine}. The limiting notion
$p\to\infty$ is usually called quasi-convex. More precisely, we say
that a function $f:X\to\mathbb{R}$ is said to be \emph{quasi-convex}
if 
\[
f(\gamma(t))\le\max\{f(\gamma(0)),f(\gamma(1))\}.
\]
It is said to be \emph{strictly quasi-convex} if the inequality is
strict whenever $\gamma(0)\ne\gamma(1)$. Furthermore, we say $f$
is \emph{properly quasi-convex} if the inequality is strict whenever
$f$ restricted to $\gamma$ is non-constant. Note that any $p$-convex
function is automatically properly quasi-convex. As above \emph{quasi-concavity}
of $f$ is just quasi-convexity of $-f$, and $f$ is said to be \emph{monotone}
if it both quasi-convex and quasi-concave. 

A nice construction is obtained as follows: Let $h:\mathbb{R}\to\mathbb{R}$
be a non-decreasing function. Then $h\circ f$ is quasi-convex for
any quasi-convex function $f$. Furthermore, if $h$ is strictly increasing
then $h\circ f$ is strictly quasi-convex if $f$ is strictly quasi-convex. 
\begin{rem*}
In \cite{Busemann1983} it is suggested to use the terminology \emph{peaklessness}
for proper quasi-convexity and \emph{weak peaklessness} for quasi-convexity.
However, this terminology does not seem to be frequently used in the
literature. Because we only obtain quasi-convex functions we stay
with the better known term of quasi-convexity.
\end{rem*}
A subset $C$ of $X$ is said to be \emph{convex} if for all geodesics
$\gamma:[0,1]\to X$ with $\gamma(0),\gamma(1)\in C$ it holds $\gamma(t)\in C$
for $t\in(0,1)$. If, in addition, $\gamma(t)\in\interior C$ whenever
$\gamma$ is non-constant and $t\in(0,1)$ then $C$ is said to be
\emph{strictly convex}. A stronger notion, called \emph{totally geodesic},
is obtained by requiring that $C$ also contains all local geodesics,
i.e. if $\gamma:[a,b]\to X$ is locally geodesic with $\gamma(a),\gamma(b)\in C$
then $\gamma(t)\in C$ for $t\in(a,b)$.

As is well-known a function $f:X\to\mathbb{R}$ is (strictly) convex
if the epigraph $\{(x,t)\in X\times\mathbb{R}\,|\,t\ge f(x)\}$ is
(strictly) convex in the product space $(X\times\mathbb{R},\tilde{d})$
where $\tilde{d}((x,t),(y,s))=\sqrt{d(x,y)^{2}+|t-s|^{2}}$. In a
similar way $f$ is quasi-convex if the sublevel $C_{s}=f^{-1}((-\infty,s])$
are totally geodesic. Furthermore, if $f$ is strictly convex then
each $C_{s}$ is strictly convex.

In general strict convexity of the sublevels of $f$ is not related
to strict quasi-convexity of $f$. However, one can always construct
a quasi-convex function out of an exhaustive non-decreasing family
of closed convex sets as follows. Under some additional assumptions
on the family the function is also strict quasi-convex if the sublevels
are strictly convex.
\begin{lem}
Assume $(C_{s})_{s\in I}$, $I\subset\mathbb{R}$, is a non-decreasing
family of closed convex sets with $\cup_{s\in I}C_{s}=X$. Then the
function $f:X\to\mathbb{R}$ defined by
\[
f(x)=\inf\{\arctan s\in\mathbb{R}\,|\,x\in C_{s}\}
\]
is quasi-convex. If, in addition, $(1)$ for each $x\in X$ there
is an $s\in I$ such that $x\notin\interior C_{s}$, and 
\[
(2)\qquad\bigcap_{s'>s}C_{s'}=C_{s}
\]
and 
\[
(3)\quad\bigcup_{s'<s}\interior C_{s'}=\interior C_{s}
\]
then $f$ is strictly quasi-convex iff each $C_{s}$ is strictly convex.
\end{lem}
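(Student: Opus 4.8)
The plan is to work directly from the definition of $f$ and the characterization of quasi-convexity via sublevel sets. First I would identify the sublevel sets of $f$. Writing $\phi=\arctan$, which is a strictly increasing homeomorphism of $\mathbb{R}$ onto $(-\pi/2,\pi/2)$, the natural guess is that for $a=\arctan s$ the sublevel set $f^{-1}((-\infty,a])$ equals $\bigcap_{s'>s}C_{s'}$, which under assumption $(2)$ is just $C_s$; more generally $f^{-1}((-\infty,a])=\bigcap\{C_{s'} : \arctan s' > a\}$. Since an arbitrary intersection of closed convex sets is closed and convex, each such sublevel set is closed and convex, hence totally geodesic is \emph{not} what we get for free — but for \emph{quasi}-convexity we only need the sublevel sets $C_a=f^{-1}((-\infty,a])$ to be such that $f$ does not exceed $\max\{f(\gamma(0)),f(\gamma(1))\}$ along a geodesic, and that is immediate: if $\gamma(0),\gamma(1)\in C_a$ then by convexity of $C_a$ the whole geodesic lies in $C_a$, so $f(\gamma(t))\le a$. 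Taking $a=\max\{f(\gamma(0)),f(\gamma(1))\}$ gives quasi-convexity. The only subtlety here is the passage between the infimum in the definition and genuine membership in a $C_s$; one should note $x\in C_s$ for all $s$ with $\arctan s> f(x)$ by definition of the infimum together with the non-decreasing property of the family, so $x\in\bigcap_{\arctan s>f(x)}C_s$, and boundedness of $\arctan$ plus $\cup C_s=X$ ensures $f$ is real-valued.

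For the second part I would prove the two implications separately. Assume each $C_s$ is strictly convex; I want $f$ strictly quasi-convex, i.e. for a non-constant geodesic $\gamma$ with endpoints $p=\gamma(0)\ne\gamma(1)=q$ we must show $f(\gamma(t))<\max\{f(p),f(q)\}$ for $t\in(0,1)$. Set $a=\max\{f(p),f(q)\}$ and pick, using assumption $(1)$ and the definition, a value $s$ with $\arctan s$ slightly larger than $a$ so that $p,q\in C_s$; the point of $(1)$ together with $(3)$ is to arrange that $p$ or $q$ actually lies in $\interior C_s$ for suitable $s$, or rather to pass to a slightly smaller parameter. Here is where the argument needs care: strict convexity of $C_s$ says an interior non-constant geodesic lies in $\interior C_s$, so $\gamma(t)\in\interior C_s$; then assumption $(3)$, $\bigcup_{s'<s}\interior C_{s'}=\interior C_s$, lets me find $s'<s$ with $\gamma(t)\in\interior C_{s'}\subset C_{s'}$, whence $f(\gamma(t))\le\arctan s'<\arctan s$. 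Choosing $s$ carefully (via $(1)$ and $(2)$, taking $s$ to be essentially $\tan a$ up to the boundary behavior) so that $\arctan s$ can be taken arbitrarily close to $a$ from above, and using $(2)$ to control the endpoints, one squeezes out $f(\gamma(t))<a$. The bookkeeping of which parameter goes with which point, and ensuring the strict inequality survives the limit $\arctan s\downarrow a$, is the delicate part.

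For the converse, assume $f$ is strictly quasi-convex and suppose some $C_{s_0}$ fails to be strictly convex: there is a non-constant geodesic $\gamma$ with endpoints in $C_{s_0}$ and an interior point $\gamma(t_0)\notin\interior C_{s_0}$. I would use assumption $(1)$ at the point $x=\gamma(t_0)$ — although $(1)$ as stated only gives \emph{some} $s$ with $x\notin\interior C_s$; the relevant fact is rather that $\gamma(t_0)\in C_{s_0}\setminus\interior C_{s_0}$, the topological boundary. Using $(3)$ one shows $f(\gamma(t_0))=\arctan s_0$ exactly (if $f(\gamma(t_0))$ were strictly smaller, $(3)$ would force $\gamma(t_0)\in\interior C_{s_0}$), while $f(\gamma(0)),f(\gamma(1))\le\arctan s_0$. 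Hence $f(\gamma(t_0))\ge\max\{f(\gamma(0)),f(\gamma(1))\}$, contradicting strict quasi-convexity (which demands strict inequality along any non-constant geodesic, even when $f$ is non-constant on it — note that here we need the endpoints' values to be $\le\arctan s_0$, and $f(\gamma(t_0))=\arctan s_0$, so the max is attained or exceeded at an interior point). This contradiction shows every $C_s$ is strictly convex.

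I expect the main obstacle to be the second part's forward implication: correctly using the three technical hypotheses $(1)$, $(2)$, $(3)$ to convert "the geodesic enters the interior of $C_s$" into "the value of $f$ strictly drops," since this requires simultaneously (a) choosing the parameter $s$ with $\arctan s$ as close as desired to the endpoint maximum, (b) applying strict convexity at that $s$, and (c) invoking the left-continuity-type condition $(3)$ to realize the strictly smaller value — all while keeping the endpoints inside the relevant sublevel set via the right-continuity-type condition $(2)$. The quasi-convexity in the first part, by contrast, is essentially immediate from convexity of intersections.
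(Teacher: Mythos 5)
Your first part (quasi-convexity) is fine and matches the paper's argument: the sublevel set $f^{-1}((-\infty,a])$ equals $\bigcap\{C_{s'}:\arctan s'>a\}$, an intersection of convex sets, hence convex, which immediately gives quasi-convexity.

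The forward implication is where there is a genuine gap. You propose to choose $s$ with $\arctan s$ ``slightly larger than $a=\max\{f(p),f(q)\}$,'' apply strict convexity of $C_s$ and then $(3)$ to get $f(\gamma(t))<\arctan s$, and finally ``squeeze'' by letting $\arctan s\downarrow a$. This squeeze does not close the argument: the limit of the strict inequalities $f(\gamma(t))<\arctan s$ as $\arctan s\downarrow a$ only yields $f(\gamma(t))\le a$, not $f(\gamma(t))<a$. The point of assumption $(2)$ is precisely to avoid the approximation altogether: since $a>-\pi/2$ (this is where $(1)$ enters), the parameter $s_0=\tan a$ is a genuine element of $\mathbb{R}$, and $(2)$ places the endpoints $p,q$ in $C_{s_0}$ \emph{exactly}, not merely in $C_s$ for $s>s_0$. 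Then strict convexity of $C_{s_0}$ puts the midpoint in $\interior C_{s_0}$, $(3)$ hands you $s'<s_0$ with the midpoint in $\interior C_{s'}$, and $f(m)\le\arctan s'<\arctan s_0=a$ is immediate with no limiting argument. You gesture at this (``taking $s$ to be essentially $\tan a$''), but the argument you actually describe is the limiting one, which fails.

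A secondary remark on your converse: you assert that $f(\gamma(t_0))<\arctan s_0$ together with $(3)$ would ``force $\gamma(t_0)\in\interior C_{s_0}$,'' but this is not what $(3)$ gives you. From $f(\gamma(t_0))<\arctan s_0$ you only obtain $\gamma(t_0)\in C_{s''}$ for some $s''<s_0$, and membership in $C_{s''}$ need not imply membership in $\interior C_{s'}$ for any $s'<s_0$ — the point could lie on $\partial C_{s''}$ for all such $s''$. The paper disposes of this direction in one line as a direct consequence of the definition of strict quasi-convexity together with the identification $C_{s}=f^{-1}((-\infty,\arctan s])$ from $(2)$; if you want to spell it out, you should argue through that identification rather than through $(3)$.
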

\begin{rem*}
If $I$ is closed one can alternatively require that 
\end{rem*}
\begin{proof}
The first and second part directly follow from the definition. So
assume $(C_{s})_{s\in I}$ satisfies the additional properties and
let $x,y\in X$ with $f(x)\le f(y)$. The first assumption shows $f(y),f(x)>-\frac{\pi}{2}$
so that $x,y\in C_{\tan f(y)}$ by the second assumption. 

If $f$ is strictly quasi-convex then each sublevel is obviously strictly
convex. We show the converse: Assume each $C_{s}$ is strictly convex.
Then any midpoint $m$ of $x$ and $y$ is in the interior of $C_{\tan f(y)}$.
The second assumption implies that there is an $s<\tan f(y)$ such
that $m\in\interior C_{s'}$. Thus $f(m)\le s<f(y)$ proving strict
quasi-convexity of $f$. 
\end{proof}
Note that all properties above are necessary to show that $f$ is
strictly quasi-convex. Indeed, the family $(C_{s})_{s\in\mathbb{R}}$
of strictly convex closed intervals in $\mathbb{R}$ given by 
\[
C_{s}^{1}=\begin{cases}
[-s,s] & s\le1\\{}
[-(s+1),s+1] & s>1
\end{cases}
\]
satisfies the first and second but not the third property. Similarly,
\[
C_{s}^{2}=\begin{cases}
[-s,s] & s<1\\{}
[-(s+1),s+1] & s\ge1
\end{cases}
\]
satisfies the first and third but not second property. Finally, let
$\phi:\mathbb{R}\to(0,\infty)$ be an increasing homeomorphism. Then
\[
C_{s}^{3}=(-\infty,\phi(s)]
\]
satisfies all but the first property. In either case the induced functions
$f_{i}$, $i=1,2,3$, are constant on some open interval. In particular,
they are not strictly quasi-convex. One may replace the intervals
by geodesic balls resp. horoballs of the same radius if the metric
space has strictly convex geodesic balls and resp. strictly convex
horoballs to get more general examples. 

\subsection{Busemann functions}

A central tool to study the structure of spaces with certain generalized
curvature bounds are Busemann function associated to rays. Here a
ray $\gamma:[0,\infty)\to X$ is an isometric embedding of the half
line, i.e. 
\[
d(\gamma(t),\gamma(s))=|t-s|\qquad s,t\ge0.
\]

\begin{defn}
[Busemann function] Given a ray $\gamma:[0,\infty)\to X$ we define
the \emph{Busemann function} $b_{\gamma}:X\to\mathbb{R}$ as follows
\[
b_{\gamma}(x)=\lim_{t\to\infty}t-d(\gamma(t),x).
\]
\end{defn}
Note that the right hand side is non-decreasing in $t$ so that the
limit is well-defined.

In case $\gamma:\mathbb{R}\to X$ is a line we define $\gamma^{\pm}:[0,\infty)\to X$
by $\gamma^{\pm}(t)=\gamma(\pm t)$. One can show that 
\[
b_{\gamma^{+}}+b_{\gamma^{-}}\le0.
\]

We say a ray $\eta:[0,\infty)\to X$ is \emph{asymptotic to $\gamma$}
if there is sequence $t_{n}\to\infty$ and unit speed geodesics $\eta_{n}:[0,d(\eta(0),\gamma(t_{n}))]\to X$
from $\eta(0)$ to $\gamma(t_{n})$ such that $\eta_{n}$ converges
uniformly on compact subsets to $\eta$. It is not difficult to see
that 
\[
b_{\gamma}(\eta(t))=t+b_{\gamma}(\eta(0))\quad t\ge0.
\]
If $(X,d)$ is a proper then for any $x\in X$ we can select a subsequence
of geodesics $(\eta_{n})_{n\in\mathbb{N}}$ connecting $x$ and $\gamma(t_{n})$
such that $(\eta_{n_{k}})_{k\in\mathbb{N}}$ converges to a ray $\eta$
which is asymptotic to $\gamma$ starting at $x$. 

A line $\eta:\mathbb{R}\to X$ is said to be \emph{bi-asymptotic}
to the line $\gamma:\mathbb{R}\to X$ if $\eta^{\pm}$ is asymptotic
to $\gamma^{\pm}$. We say that $\eta$ is \emph{parallel} to $\gamma$
if the shifted lines $\eta_{(s)}:t\mapsto\eta(t+s)$ are bi-asymptotic
to $\gamma$. It is not clear whether every bi-asymptotic line $\eta$
to $\gamma$ is also parallel to $\gamma$. Note, that if $(X,d)$
is non-branching then it suffices to show that $b_{\gamma^{+}}$ restricted
to $\eta$ is affine. Assuming Busemann concavity this is indeed the
case, see Lemma \ref{lem:bi-asym-parallel} below.

\subsection{Gromov-Hausdorff convergence}

Given two subsets $A$ and $B$ of a metric space $(Z,d_{Z})$ the
\emph{Hausdorff distance} $d_{Z}^{(H)}$ of $A$ and $B$ is defined
as 
\[
d_{Z}^{(H)}(A,B)=\inf\{\epsilon>0\,|\,A\subset B_{\epsilon},B\subset A_{\epsilon}\}
\]
where $A_{\epsilon}=\cup_{x\in A}B_{\epsilon}(x)$ and $B_{\epsilon}=\cup_{x\in B}B_{\epsilon}(x)$.
Let $(X,d_{X})$ and $(Y,d_{Y})$ be two metric spaces. We say that
a metric space $(Z,d_{Z})$ together with two maps $i_{X}:X\to Z$
and $i_{Y}:Y\to Z$ is a \emph{metric coupling} of $(X,d_{X})$ and
$(Y,d_{Y})$ if $i_{X}$ and $i_{Y}$ are isometric embeddings, i.e.
for all $x_{i}\in X$ and $y_{i}\in Y$ it holds 
\begin{eqnarray*}
d_{Z}(i_{X}(x_{1}),i_{X}(x_{2})) & = & d_{X}(x_{1},x_{2})\\
d_{Z}(i_{Y}(y_{1}),i_{X}(y_{2})) & = & d_{Y}(y_{1},y_{2}).
\end{eqnarray*}
 Then the \emph{Gromov-Hausdorff distance} of $(X,d_{X})$ and $(Y,d_{Y})$
is defined as 
\[
d_{GH}((X,d_{X}),(Y,d_{Y}))=\inf d_{Z}^{(H)}(i_{X}(X),i_{Y}(Y))
\]
where the infimum is taken over all metric couplings of $(X,d_{X})$
and $(Y,d_{Y})$. Note that $d_{GH}$ is zero iff the completions
of $(X,d_{X})$ and $(Y,d_{Y})$ are isometric. Thus $d_{GH}$ induces
a metric on the equivalence classes of isometric complete metric spaces.
One may restrict the metric couplings further if a certain point is
supposed to be preserved. More precisely, let $(X,d_{X},x)$ and $(Y,d_{Y},y)$
be \emph{point metric spaces}. Then a metric coupling is a triple
$((Z,d_{Z},z),i_{X},i_{Y})$ such that $i_{X}$ and $i_{Y}$ are isometric
embeddings with $i_{X}(x)=i_{Y}(y)=z$. The Gromov-Hausdorff distance
is then defined as above.

In general this convergence is rather strong in case of non-compact/unbounded
spaces. A weaker notion is given by the \emph{pointed Gromov-Hausdorff
convergence}. More precisely, we say that a sequence $(X_{n},d_{n},x_{n})$
converges to the pointed metric space $(X,d,x)$ in the \emph{pointed
Gromov-Hausdorff topology} if for each $r>0$ 
\[
d_{GH}((B_{r}^{n}(x_{n}),d_{n}),(B_{r}(x),d))\to0
\]
where $B_{r}^{n}(x_{n})$ and $B_{r}(x)$ are the usual balls of radius
$r$ with respect to $d_{n}$ and resp. $d$.

\section{Busemann concavity}

In this section we define a form of non-negative curvature which is
similar to Busemann's notion of non-positive curvature. As it turns
out this notion is not new. It appeared already in the study of Hilbert
geometry as ``has defined curvature'' \cite{Kelly1958} and in a
paper of Kann \cite{Kann1961} who studied two dimensional $G$-spaces
of positive curvature which is defined via an additional quadratic
term. This, however, differs slightly from the definition in terms
of comparison geometry presented below. 
\begin{defn}
[Busemann concave] A geodesic metric space $(X,d)$ is said to be
\emph{Busemann concave} if for any three point $x,y_{1},y_{2}\in X$
and any geodesics $\gamma_{x,y_{i}}$ connecting $x$ and $y_{i}$
the function 
\[
t\mapsto d(\gamma_{x,y_{1}}(t),\gamma_{x,y_{2}}(t))
\]
 is concave on $[0,1]$.
\end{defn}
Busemann concavity implies that the space is non-branching. One readily
verifies that any strictly convex Banach space is Busemann concave.
Below we give further examples.

It is possible to define Busemann concavity in terms of comparison
geometry. More precisely, let $\triangle(\tilde{x}\tilde{y}_{1}\tilde{y}_{2})$
be a comparison triangle in $\mathbb{R}^{2}$ with side lengths $d(x,y_{1})$,
$d(x,y_{2})$ and $d(y_{1},y_{2})$. Then Busemann convexity is equivalent
to requiring 
\[
d(\gamma_{x,y_{1}}(t),\gamma_{x,y_{2}}(t))\ge d_{\mathbb{R}^{2}}(\tilde{\gamma}_{\tilde{x},\tilde{y}_{1}}(t),\tilde{\gamma}_{\tilde{x},\tilde{y}_{2}}(t))
\]
for all $t\in[0,1]$. With the help of this, it is possible to define
spaces with lower bound $k$ on the curvature for general $k\in\mathbb{R}$.
Note that for $k>0$ the existence of a comparison triangle is implicitly
assumed, see also Section \ref{subsec:Bonnet-Myers-theorem}.

Using the triangle comparison definition for Alexandrov spaces and
the Topogonov comparison theorem for Riemannian manifolds we obtain
the following.
\begin{lem}
Every Alexandrov space with sectional curvature bounded below by $k$
has Busemann curvature bounded below by $k$. Furthermore, a Riemannian
manifold has sectional curvature bounded below $k$ iff it has Busemann
curvature bounded below by $k$.
\end{lem}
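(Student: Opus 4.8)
The plan is to prove the two assertions in turn. Write $M_k^2$ for the simply connected model surface of constant curvature $k$, so that ``Busemann curvature bounded below by $k$'' is the comparison inequality $d(\gamma_{x,y_1}(t),\gamma_{x,y_2}(t))\ge d_{M_k^2}(\tilde\gamma_{\tilde x,\tilde y_1}(t),\tilde\gamma_{\tilde x,\tilde y_2}(t))$, where $\triangle\tilde x\tilde y_1\tilde y_2\subset M_k^2$ is a comparison triangle (its existence for $k>0$ being part of the standing hypotheses). For the first assertion, fix $x,y_1,y_2$, geodesics $\gamma_i=\gamma_{x,y_i}\colon[0,1]\to X$, put $d_i=d(x,y_i)$, fix $t\in[0,1]$, and let $\tilde\gamma_i$ be the sides of such a comparison triangle. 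I would use the Alexandrov bound in the form of monotonicity of comparison angles: the $k$-comparison angle at $x$ of the sub-triangle $\triangle(x,\gamma_1(t),\gamma_2(t))$ is non-increasing in $t$, hence $\tilde\angle_k\big(\gamma_1(t)\,x\,\gamma_2(t)\big)\ge\tilde\angle_k(y_1\,x\,y_2)=:\tilde\alpha$. Since $\gamma_i(t)$ lies at distance $td_i$ from $x$, this sub-triangle has legs $td_1,td_2$ at $x$ with included comparison angle $\ge\tilde\alpha$; by the elementary monotonicity of the law of cosines in $M_k^2$ (for fixed legs, the opposite side increases with the included angle) its third side $d(\gamma_1(t),\gamma_2(t))$ is at least the third side of the $M_k^2$-hinge with legs $td_1,td_2$ and angle $\tilde\alpha$. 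But that hinge is precisely the one cut out of $\triangle\tilde x\tilde y_1\tilde y_2$ by the points $\tilde\gamma_1(t)$ and $\tilde\gamma_2(t)$, so its third side equals $d_{M_k^2}(\tilde\gamma_1(t),\tilde\gamma_2(t))$, which is the required inequality.

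For the Riemannian equivalence, if $M$ has sectional curvature $\ge k$ then by Toponogov's theorem it is an Alexandrov space with curvature $\ge k$ and the first part applies. For the converse I argue by contradiction: suppose $K_p(\sigma)<k$ for some $p$ and some plane $\sigma\subset T_pM$, pick linearly independent unit vectors $\xi_1,\xi_2\in\sigma$ making an angle $\theta\in(0,\pi)$, and for small $\epsilon>0$ set $\gamma_i(t)=\exp_p(t\epsilon\xi_i)$, $y_i=\gamma_i(1)$. Using the classical fourth-order expansion $d(\exp_pX,\exp_pY)^2=|X-Y|^2-\tfrac13 K_p(\sigma)\,|X\wedge Y|^2+o(|X|^4+|Y|^4)$ in $M$, the same expansion with $K_p(\sigma)$ replaced by $k$ in $M_k^2$, and $d(y_1,y_2)$ to pin down the comparison triangle, one finds that both sides of the Busemann comparison share the same term $t^2 d(y_1,y_2)^2$ (their only other contribution up to order $\epsilon^4$ being a curvature term), so that
\[
d\big(\gamma_1(t),\gamma_2(t)\big)^2-d_{M_k^2}\big(\tilde\gamma_1(t),\tilde\gamma_2(t)\big)^2=\tfrac13\,\epsilon^4\sin^2\theta\,\big(K_p(\sigma)-k\big)\,(t^2-t^4)+o(\epsilon^4).
\]
Since $t^2-t^4>0$ for $t\in(0,1)$, the right-hand side is negative for all small $\epsilon$, contradicting Busemann curvature $\ge k$; hence $K_p(\sigma)\ge k$ for every $p$ and every plane $\sigma$.

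The point that needs care is the direction of the comparison in the first part: the naive route through the hinge form of Toponogov's theorem runs the wrong way — on the round sphere a hinge with a prescribed vertex angle has a \emph{shorter} opposite side than the corresponding Euclidean hinge — so one must argue through the comparison \emph{angle} of the sub-triangle $\triangle(x,\gamma_1(t),\gamma_2(t))$ and only then invoke the law of cosines in $M_k^2$. Everything else is routine: the equivalence of comparison-angle monotonicity with the Alexandrov bound, the monotonicity of the $M_k^2$ law of cosines, the bookkeeping of the fourth-order terms for the converse (the remainder being uniform in $t\in[0,1]$ since $|t\epsilon\xi_i|\le\epsilon$), and, when $k>0$, keeping track of the perimeter bounds that guarantee the comparison triangles actually exist.
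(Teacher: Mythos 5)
Your proof is correct and fills in the details of the paper's one-line justification (``Using the triangle comparison definition for Alexandrov spaces and the Toponogov comparison theorem for Riemannian manifolds''): the comparison-angle monotonicity argument plus the law of cosines in $M_k^2$ is exactly what the triangle comparison definition gives for the first part, and Toponogov together with the fourth-order Taylor expansion of $d\circ(\exp_p\times\exp_p)$ is the standard way to get both implications of the Riemannian equivalence. One small remark: your caution that ``the hinge form of Toponogov runs the wrong way'' is itself off the mark --- in a space of curvature $\ge k$ the hinge version gives $d(\gamma_1(t),\gamma_2(t))$ at least the $M_k^2$-hinge with the \emph{Alexandrov} angle $\alpha$, and since $\alpha\ge\tilde\alpha$ the monotone law of cosines then closes the argument just as well; so the hinge route also works, though the comparison-angle route you chose is cleaner. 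Also, for the contradiction at the end you should fix a single $t\in(0,1)$ (say $t=\tfrac12$) before letting $\epsilon\to0$, since the $o(\epsilon^4)$ remainder, while uniform in $t$, can dominate $\epsilon^4(t^2-t^4)$ when $t$ is near $0$ or $1$; you essentially acknowledge this but it is worth making explicit.
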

\begin{rem*}
Similar to the argument in \cite{Foertsch2010} the existence of angles
implies that a metric space with Busemann curvature bounded below
by $k\in\mathbb{R}$ is an Alexandrov space with the same lower curvature
bound. We leave it to the reader to work out the details.
\end{rem*}
In contrast to Busemann convexity it is not clear whether it suffices
to check the property above only for midpoints, i.e. 
\[
d(m_{x,y_{1}},m_{x,y_{2}})\ge\frac{1}{2}d(y_{1},y_{2}).
\]

Note that similar to Busemann convexity, Busemann concavity is not
stable under Gromov-Hausdorff convergence. Nevertheless, a weaker
property is preserved. 
\begin{defn}
A function $\sigma:X\times X\times[0,1]$ is called a \emph{geodesic
bicombing} if $\sigma_{xy}(0)=x$, $\sigma_{xy}(1)=y$ and $d(\sigma_{xy}(t),\sigma_{xy}(t'))=|t-t'|d(x,y)$
for all $x,y\in X$ and $t,t'\in[0,1]$. We say the bicombing is \emph{closed}
if $(x_{n},y_{n})\to(x,y)$ implies $\sigma_{x_{n}y_{n}}(t)\to\sigma_{xy}(t)$
for all $t\in[0,1]$. 
\end{defn}

\begin{defn}
[weak Busemann concavity] A metric space $(X,d)$ is said to be \emph{weak
Busemann concave} if there is a closed geodesic bicombing $\sigma$
such that for all $x,y,z\in X$ it holds 
\[
t\mapsto d(\sigma_{xy}(t),\sigma_{xz}(t))
\]
is concave.
\end{defn}
\begin{rem*}
This property resembles Kleiner's notion of often convex spaces \cite{Kleiner1997},
resp. the notion of convex bicombings \cite{Descombes2014}. 
\end{rem*}
It is easy to see that any Banach space is weakly Busemann concave.
The corresponding geodesic bicombing is given by straight lines. In
a future work we try to give generalizations of Theorem \ref{thm:Splitting}
and Proposition \ref{prop:Carnot-Busemann} using only weak Busemann
concavity.

Below it is shown that tangent cones of Busemann concave spaces are
uniquely defined. If the space is doubling or admits a doubling measure
then one can adjust the proofs of \cite{LeDonne2011} to show that
the tangent cones are (locally compact) Carnot groups away from a
thin set, i.e. a set which has zero measure for every doubling measure.
We refer to \cite{LeDonne2011} for necessary definitions of Carnot
groups. 

The following shows that the only Busemann concave Carnot groups are
Banach spaces with strictly convex norm. As Busemann concavity is
not stable under Gromov-Hausdorff convergence this is not sufficient
to conclude that almost all tangent cones are Banach spaces.
\begin{prop}
\label{prop:Carnot-Busemann}Assume $(X,d)$ is a finite dimensional
Busemann concave Carnot group with Carnot-Caratheodory metric $d$.
Then $(X,d)$ is a Banach space with strictly convex norm.
\end{prop}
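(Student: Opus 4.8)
The plan is to show that a finite-dimensional Busemann concave Carnot group must in fact have vanishing commutators, hence be abelian, and therefore coincide with a finite-dimensional normed space whose norm is strictly convex (strict convexity follows from the fact that Busemann concavity forces the non-branching property, which for a normed space is equivalent to strict convexity of the unit ball). The non-trivial content is the abelianness, and the mechanism I would use is the interplay between the dilations of a Carnot group and the concavity of $t\mapsto d(\gamma_{x,y_1}(t),\gamma_{x,y_2}(t))$.

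First I would recall the relevant structure: a Carnot group $(X,d)$ carries a one-parameter family of automorphic dilations $\delta_\lambda$, and geodesics through the identity $e$ are taken to geodesics through $e$ by $\delta_\lambda$, with $\delta_\lambda\circ\gamma_{e,y}(t)=\gamma_{e,\delta_\lambda y}(t)$ after reparametrization, and $d(\delta_\lambda p,\delta_\lambda q)$ is comparable to $\lambda\,d(p,q)$ only when $p,q$ lie on a common horizontal line through $e$; off such lines the dilation is genuinely non-homogeneous in the higher strata. The key observation is that Busemann concavity of $t\mapsto d(\gamma_{e,y_1}(t),\gamma_{e,y_2}(t))$ together with the value $0$ at $t=0$ forces this function to lie \emph{above} the linear interpolation $t\,d(y_1,y_2)$, i.e. $d(\gamma_{e,y_1}(t),\gamma_{e,y_2}(t))\ge t\,d(y_1,y_2)$ — this is exactly the comparison-triangle reformulation stated before Proposition \ref{prop:Carnot-Busemann}, specialized to the vertex at $e$. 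Applying the dilation $\delta_{1/t}$ and letting $t\to 0$, the right-hand side is invariant while the left-hand side, after rescaling, converges to the corresponding distance in the asymptotic cone / tangent at $e$, which for a Carnot group is the group itself; chasing the inequality through the non-homogeneous scaling of the higher-stratum coordinates shows that any contribution from strata of step $\ge 2$ would make the left side grow strictly slower than linearly, contradicting the inequality unless those contributions vanish identically.

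Concretely, the steps in order are: (1) reduce to triangles with one vertex at $e$ using left-invariance of $d$; (2) write the Busemann-concavity inequality in the form $d(\gamma_{e,y_1}(t),\gamma_{e,y_2}(t))\ge t\,d(y_1,y_2)$ for all $t\in[0,1]$; (3) apply $\delta_{1/t}$, using $\delta_{1/t}\gamma_{e,y_i}(t)=\gamma_{e,z_i^{(t)}}(1)$ for suitable $z_i^{(t)}$ with $z_i^{(t)}\to$ the "horizontal part" of $y_i$ as $t\to0$, so that $d(\gamma_{e,y_1}(t),\gamma_{e,y_2}(t))=\tfrac{1}{?}\,d(\delta_{1/t}\gamma_{e,y_1}(t),\delta_{1/t}\gamma_{e,y_2}(t))$ cannot in general equal $t$ times a constant unless the group law is linear; (4) conclude the Lie bracket of the Lie algebra is trivial, so $X$ is $\mathbb{R}^n$ with a norm; (5) invoke that Busemann concavity $\Rightarrow$ non-branching, and non-branching for a normed space $\Rightarrow$ strictly convex unit ball, giving the strictly convex norm.

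The main obstacle I anticipate is step (3): making the scaling argument fully rigorous requires controlling how the Carnot--Carath\'eodory distance between two dilated geodesics behaves, since the dilation acts with weight $i$ on the $i$-th stratum and the CC-distance is only quasi-homogeneous in the non-horizontal directions. One clean way around this is to avoid explicit dilations and instead argue at the level of the tangent cone: by the uniqueness of tangent cones for Busemann concave spaces (established earlier in this section) the tangent cone of $X$ at any point is again Busemann concave and is isometric to $X$ itself (Carnot groups are self-similar), so one may iterate — but this self-consistency, combined with the fact that a \emph{first-order} blow-up of the CC-metric linearizes the group only in the horizontal directions, pins down that the higher strata must be absent. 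I would present the dilation version as the primary argument and remark that the tangent-cone version gives an alternative; either way, the crux is that Busemann concavity is a genuinely \emph{affine/Euclidean-type} fattening condition on triangles that is incompatible with the sub-Riemannian non-homogeneity of any non-abelian Carnot group.
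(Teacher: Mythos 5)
Your plan hinges on step (3), and that step rests on a factual error. In a Carnot group with Carnot--Carath\'eodory metric the dilations are \emph{exact} similarities: $d(\delta_\lambda p,\delta_\lambda q)=\lambda\,d(p,q)$ for \emph{all} $p,q$, horizontal or not. There is no ``genuinely non-homogeneous'' scaling of the CC-distance coming from the higher strata, so the contradiction you hope to extract by chasing the inequality through a supposed quasi-homogeneity is not there. Once you restore exact homogeneity, the inequality $d(\gamma_{e,y_1}(t),\gamma_{e,y_2}(t))\ge t\,d(y_1,y_2)$, after applying $\delta_{1/t}$, collapses to $d(\delta_{1/t}\gamma_{e,y_1}(t),\delta_{1/t}\gamma_{e,y_2}(t))\ge d(y_1,y_2)$, and the Pansu blow-ups on the left converge to points $\exp(V_i)$ determined by the initial horizontal velocities; there is no visible clash with the right-hand side for a general non-abelian Carnot group, so abelianness does not follow from this estimate.

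The paper avoids this by comparing two geodesics $\eta^1,\eta^2$ emanating from the same point and tangent to the \emph{same} horizontal vector $V_1$, rather than geodesics to two arbitrary targets. Using the exact dilation invariance $d(\delta_\lambda p,\delta_\lambda q)=\lambda d(p,q)$ and Busemann concavity at the apex gives $d(\eta_\lambda^1(t/\lambda),\eta_\lambda^2(t/\lambda))\ge d(\eta^1(t),\eta^2(t))$ for $\lambda>1$, while the common Pansu differential forces both blow-ups $\delta_\lambda(\eta^i(t/\lambda))$ to converge to the \emph{same} limit $\exp(tV_1)$; letting $\lambda\to\infty$ gives $d(\eta^1(t),\eta^2(t))=0$, so geodesics tangent to a given horizontal direction are unique. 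The missing ingredient that turns this uniqueness into abelianness is a structural fact you never invoke: any Carnot group of step $k>1$ admits \emph{distinct} geodesics tangent to the same horizontal vector, which is exactly what the uniqueness forbids. You should supply (or cite) that fact; without it your step (4) does not follow. Your step (5)---strict convexity from non-branching of a normed space---is a clean way to finish and matches the paper's intent.
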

\begin{proof}
Let $V_{1}\in TX$ be a horizontal vector and $\eta^{1},\eta^{2}:(-\epsilon,\epsilon)\to X$
be tangent to $V_{1}$ at $\eta^{1}(0)=\eta^{2}(0)$. Now define the
delated curves 
\[
\eta_{\lambda}^{i}(t)=\delta_{\lambda}(\eta^{i}(t)).
\]
Then by invariance it holds $d(\eta_{\lambda}^{1}(t),\eta_{\lambda}^{2}(t))=\lambda d(\eta^{1}(t),\eta^{2}(t)).$
Together with Busemann concavity we get for $\lambda>1$ 
\[
d\left(\eta_{\lambda}^{1}\left(\frac{t}{\lambda}\right),\eta_{\lambda}^{2}\left(\frac{t}{\lambda}\right)\right)\ge d(\eta^{1}(t),\eta^{2}(t)).
\]
 As $\eta^{1}$ and $\eta^{2}$ are both tangent to $V_{1}$, their
Pansu differential is also $V_{1}$. In particular, it holds 
\[
\eta_{\lambda}^{i}\left(\frac{t}{\lambda}\right)\to\exp(tV_{1}).
\]
 But this implies that $d\left(\eta_{\lambda}^{1}\left(\frac{t}{\lambda}\right),\eta_{\lambda}^{2}\left(\frac{t}{\lambda}\right)\right)\to0$
and thus $d(\eta^{1}(t),\eta^{2}(t))=0$. 

To conclude, we just need to note that any $k$-step Carnot group
with $k>1$ has distinct geodesics which are tangent to the same horizontal
vector. Thus $X$ must be a rank $1$ Carnot group, i.e. a Banach
space. Strict convexity of the norm follows as otherwise Busemann
concavity cannot hold.
\end{proof}
\begin{rem*}
[Asymmetric metrics] In principle it is possible to define Busemann
concavity also for asymmetric metrics. In order to prove Propositions
\ref{prop:Busemann-contraction} and \ref{prop:Busemann-MCP-et-al}
one only needs the concavity property between any two forward resp.
backward geodesics starting at a fixed point $x\in X$. The proof
of the splitting theorem then only requires minor adjustments taking
care of the asymmetry of the distance.
\end{rem*}

\subsection{Constructions and examples\label{subsec:Constructions-and-examples}}

A whole family of Busemann concave spaces is obtained by products
of Busemann concave spaces.
\begin{lem}
\label{lem:products}If $(X_{i},d_{i})_{i\in I}$ are Busemann concave
spaces for some finite index set $I\subset\mathbb{N}$ and $F$ is
a strictly convex norm on $\mathbb{R}^{|I|}$ then $X=\times_{i\in I}X_{i}$
equipped with the metric $d((x_{i})_{i\in I},(y_{i})_{i\in I})=F((d(x_{i},y_{i}))_{i\in I})$
is Busemann concave. 
\end{lem}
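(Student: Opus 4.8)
The plan is to reduce the product statement to the one-dimensional concavity assumption on each factor together with the strict convexity of $F$. First I would fix three points $x=(x_i)$, $y=(y_i)$, $z=(z_i)$ in $X=\times_{i\in I}X_i$ and observe that a geodesic in $X$ from $x$ to $y$ is necessarily of the form $t\mapsto(\gamma_{x_i,y_i}(t))_{i\in I}$ where each $\gamma_{x_i,y_i}$ is a constant-speed geodesic in $X_i$; this is the standard fact that for a norm-combined product metric the coordinate projections of a geodesic are geodesics, and it uses only that $F$ is a norm (monotone in each coordinate on the positive orthant) and that each $X_i$ is geodesic. Thus, writing $\gamma(t)=(\gamma_{x_i,y_i}(t))_i$ and $\eta(t)=(\gamma_{x_i,z_i}(t))_i$ for the two geodesics issuing from $x$, we have
\[
d(\gamma(t),\eta(t))=F\bigl((d_i(\gamma_{x_i,y_i}(t),\gamma_{x_i,z_i}(t)))_{i\in I}\bigr).
\]

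The second step is the core computation. For each $i$, Busemann concavity of $X_i$ says the function $f_i(t):=d_i(\gamma_{x_i,y_i}(t),\gamma_{x_i,z_i}(t))$ is concave on $[0,1]$. Now I would invoke two elementary facts about the norm $F$ on $\mathbb{R}^{|I|}$: (a) $F$ is convex and monotone non-decreasing in each coordinate on the non-negative orthant, hence $F$ composed with a vector of concave non-negative functions is concave — more precisely, for $t_0,t_1\in[0,1]$ and $s=(1-\lambda)t_0+\lambda t_1$, monotonicity gives $F((f_i(s))_i)\ge F(((1-\lambda)f_i(t_0)+\lambda f_i(t_1))_i)$ and convexity of $F$ gives $F(((1-\lambda)f_i(t_0)+\lambda f_i(t_1))_i)\ge(1-\lambda)F((f_i(t_0))_i)+\lambda F((f_i(t_1))_i)$, which chains to the desired concavity of $t\mapsto d(\gamma(t),\eta(t))$. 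Combining with the first step, this already shows $X$ is Busemann concave. One should also record that $X$ is geodesic and proper (finite product of proper geodesic spaces), which is needed for the ambient standing assumptions.

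The one genuinely delicate point — and the place where strict convexity of $F$ is actually used rather than mere convexity — is ensuring the product metric behaves correctly, i.e.\ that $X$ is non-branching and that geodesics in $X$ really are exactly the coordinatewise ones with a single common parametrization. If $F$ were merely convex (e.g.\ an $\ell^\infty$-type norm) a "geodesic" in the product could allow different factors to move at different nonuniform rates, destroying the clean formula above; strict convexity forces each coordinate curve to be a genuine constant-speed geodesic synchronized with the others, and Busemann concavity of each $X_i$ forces each $X_i$ non-branching, whence the product is non-branching. So the main obstacle to watch is not the concavity inequality, which is routine, but correctly justifying the structure-of-geodesics lemma for the $F$-combined product metric; I would either cite the standard statement for strictly convex norm products or give a short argument that any geodesic, projected to a factor, is a reparametrized geodesic and that strict convexity of $F$ forces the reparametrization to be the identity, so that the displayed formula for $d(\gamma(t),\eta(t))$ holds verbatim.
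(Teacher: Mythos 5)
The central inequality in your argument has the wrong sign, and this is exactly where the argument breaks. Convexity of $F$ gives
\[
F\bigl((1-\lambda)u+\lambda v\bigr)\le(1-\lambda)F(u)+\lambda F(v),
\]
not $\ge$; applied with $u=(f_i(t_0))_i$ and $v=(f_i(t_1))_i$ this yields
\[
F\bigl(((1-\lambda)f_i(t_0)+\lambda f_i(t_1))_i\bigr)\le(1-\lambda)F\bigl((f_i(t_0))_i\bigr)+\lambda F\bigl((f_i(t_1))_i\bigr),
\]
which points the opposite way from what you need, so the chain from $g(s)\ge F\bigl(((1-\lambda)f_i(t_0)+\lambda f_i(t_1))_i\bigr)$ down to $(1-\lambda)g(t_0)+\lambda g(t_1)$ does not close. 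Nor is this a fixable slip: a norm composed with coordinatewise nonnegative concave functions need not be concave. Take $F$ the Euclidean norm on $\mathbb{R}^2$, $f_1(t)=t$ and $f_2(t)=\min(at,c)$; both $f_i$ are concave and vanish at $0$, yet $F\bigl((f_1(t),f_2(t))\bigr)=\sqrt{t^2+c^2}$ for $t>c/a$, which is strictly convex. So convexity of $F$ cannot do the work you want it to.

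The paper's own proof takes a different route and establishes a correspondingly weaker inequality. It does not attempt to compare $d(\gamma(\cdot),\gamma'(\cdot))$ between two arbitrary parameter values; it only uses the single consequence of Busemann concavity at the common starting point, namely $d_i(\gamma_i(t),\gamma_i'(t))\ge t\,d_i(\gamma_i(1),\gamma_i'(1))$ since $f_i(0)=0$, together with monotonicity and positive $1$-homogeneity of $F$ on the nonnegative orthant, to get $d(\gamma(t),\gamma'(t))\ge t\,d(\gamma(1),\gamma'(1))$. That is the Euclidean comparison-triangle inequality which the paper earlier declares equivalent to Busemann concavity, and the computation in the paper stops there. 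Your version, by working between two interior parameter values $t_0,t_1$ rather than between $0$ and $t$, is trying to prove honest concavity of $t\mapsto d(\gamma(t),\gamma'(t))$, and the obstruction you ran into is genuine: from concavity of the coordinate functions and the norm structure of $F$ one can only deduce that $t\mapsto d(\gamma(t),\gamma'(t))/t$ is nonincreasing, which is strictly weaker than concavity of $t\mapsto d(\gamma(t),\gamma'(t))$ on $[0,1]$. Any complete proof of the lemma under the paper's definition has to address this gap rather than route around it via convexity of $F$; the paper routes around it by invoking the earlier asserted equivalence with the comparison inequality, a point you should flag and examine rather than reproduce.
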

\begin{rem*}
(1) It is possible to allow for countably infinite index sets. The
obtained space is then an extended metric space, i.e. the metric may
be infinite. 

(2) If $F$ is not strictly convex or some factors are only weakly
Busemann concave then their product is weakly Busemann concave. 
\end{rem*}
\begin{proof}
If $F$ is strictly convex then geodesics in $X$ are obtained as
``product geodesics''. Thus
\begin{eqnarray*}
d((\gamma_{i}(t))_{i\in I},(\gamma_{i}^{'}(t))_{i\in I}) & = & F((d(\gamma_{i}(t),\gamma_{i}^{'}(t))_{i\in I})\\
 & \ge & tF(d(\gamma_{i}(1),\gamma_{i}^{'}(1))_{i\in I})\\
 & = & td((\gamma_{i}(1))_{i\in I},(\gamma_{i}^{'}(1))_{i\in I}).
\end{eqnarray*}
\end{proof}
\begin{rem*}
[Berwald spaces] Using Jacobi fields it is possible to show that
Berwald spaces with flag curvature bounded below by $k\in\mathbb{R}$
satisfy the corresponding Busemann comparison result locally until
the conjugate radius is reached. In \cite{Kristaly2006} it was shown
that in the class Berwald spaces non-positive flag-curvature is equivalent
to Busemann convexity. In the current setting we were not able to
``invert'' the inequalities to show the same for non-negative flag
curvature and obtain global Busemann concavity. Note, however, that
any simply connected non-negatively curved Berwald space which does
not contain a higher rank symmetric factor can be exactly described
\cite{Szabo1981}. More precisely, they are metric products as above
and each factor is either flat or a Riemannian manifold of non-negative
curvature (see \cite{Kell2015}). Thus a deeper understanding of higher
rank symmetric spaces and their Riemannian and Berwald structures
would allow to characterize non-negatively curved Berwald spaces in
terms of Busemann concavity.
\end{rem*}
Recall that the (Euclidean) cone $\Con(X)$ of a metric space $(X,d)$
is the set $X\times[0,\infty)$ where the points $(x,0)$ are identified
and the metric on $\Con(X)$ is given by 
\[
d((x,r),(y,s))^{2}=r^{2}+s^{2}-2rs\cos(\min\{\pi,d(x,y)\}).
\]
 
\begin{lem}
Assume $(X,d)$ has its diameter is bounded by $\pi$. Then the Euclidean
cone over $\Con(X)$ is Busemann concave iff $(X,d)$ has Busemann
curvature bounded below by $1$.
\end{lem}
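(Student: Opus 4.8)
The plan is to combine the two comparison reformulations with the structure of geodesics in a Euclidean cone. Recall that a constant--speed geodesic of $\Con(X)$ issuing from $(x_{0},r_{0})$ is either radial, of the form $\tau\mapsto(x_{0},\tau r_{1})$ when it ends at $(x_{0},r_{1})$ --- in particular every geodesic from the apex is radial --- or, for an endpoint $(x_{1},r_{1})$ with $\alpha:=d(x_{0},x_{1})\in(0,\pi)$, it develops isometrically onto the Euclidean segment joining the polar points $(r_{0},0)$ and $(r_{1},\alpha)$ inside the planar sector of angle $\alpha$; thus it has the form $\Gamma(s)=(\sigma(u(s)),R(s))$, where $\sigma$ is a geodesic of $X$ from $x_{0}$ to $x_{1}$, the pair $(R(s),\alpha\,u(s))$ is the constant--speed parametrisation of that segment in polar coordinates, $R$ is convex, and $u$ is increasing with $u(0)=0$ and $u(1)=1$. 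Because $\diam X\le\pi$ the truncation $\min\{\pi,\cdot\}$ in the cone metric is never active on the base distances that occur below, so no subtlety about geodesics through the apex arises except in the boundary case $d(x_{0},x_{1})=\pi$, which is treated separately by a direct computation.

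For the implication that a Busemann lower curvature bound $1$ on $X$ makes $\Con(X)$ Busemann concave, consider two geodesics $\Gamma_{1},\Gamma_{2}$ of $\Con(X)$ with common vertex $P_{0}$. If $P_{0}$ is the apex, both are radial and the cone formula gives $d_{\Con}(\Gamma_{1}(s),\Gamma_{2}(s))=s\,d_{\Con}(\Gamma_{1}(1),\Gamma_{2}(1))$, which is affine, hence concave; the cases where an endpoint has base point $x_{0}$ or lies at distance $\pi$ from it reduce to this or to a one--dimensional check. In the generic case write $P_{0}=(x_{0},r_{0})$ with $r_{0}>0$, $P_{i}=(x_{i},r_{i})$, $\alpha_{i}=d(x_{0},x_{i})\in(0,\pi)$, and develop $\Gamma_{i}(s)=(\sigma_{i}(u_{i}(s)),R_{i}(s))$, so that
\[
d_{\Con}(\Gamma_{1}(s),\Gamma_{2}(s))^{2}=R_{1}(s)^{2}+R_{2}(s)^{2}-2R_{1}(s)R_{2}(s)\cos d\bigl(\sigma_{1}(u_{1}(s)),\sigma_{2}(u_{2}(s))\bigr).
\]
The decisive point is that $R_{i}$ and $u_{i}$ depend only on $r_{0},r_{i},\alpha_{i}$, so they are exactly the data describing, inside $\Con(S^{2})=\mathbb{R}^{3}$, the straight segments from $\bar{P}_{0}$ to $\bar{P}_{i}$, where $\bar{P}_{i}=(\bar{x}_{i},r_{i})$ and $\bar{x}_{0}\bar{x}_{1}\bar{x}_{2}$ is the round--sphere comparison triangle of $x_{0}x_{1}x_{2}$; since the three defining side lengths agree one has $d_{\mathbb{R}^{3}}(\bar{P}_{i},\bar{P}_{j})=d_{\Con}(P_{i},P_{j})$ for all $i,j$, so $\bar{P}_{0}\bar{P}_{1}\bar{P}_{2}$ is a genuine Euclidean comparison triangle for $P_{0}P_{1}P_{2}$. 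As $\cos$ is decreasing on $[0,\pi]$ and every distance in sight is $\le\pi$, the domination of the displayed quantity over its value in the flat cone $\mathbb{R}^{3}$ --- where the spread of two segments from a common point is affine --- reduces to the family
\[
d\bigl(\sigma_{1}(s_{1}),\sigma_{2}(s_{2})\bigr)\ \ge\ d_{S^{2}}\bigl(\bar{\sigma}_{1}(s_{1}),\bar{\sigma}_{2}(s_{2})\bigr)\qquad (s_{i}=u_{i}(s)),
\]
and, applied also to all sub--configurations and fed into the comparison characterisation of Busemann concavity, this yields the required concavity.

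The hard part --- the step I do not expect to settle in a line --- is precisely the passage from the Busemann lower curvature bound on $X$, which compares $\sigma_{1}$ and $\sigma_{2}$ at a \emph{common} parameter, to the two--parameter inequalities just displayed, forced upon us because $u_{1}(s)\ne u_{2}(s)$ in general: the two cone geodesics sweep their angular directions at different, non--uniform rates, so their base geodesics are sampled at unequal parameters. To bridge this I would try to upgrade the hypothesis to a monotonicity statement for round--sphere comparison angles along the $\sigma_{i}$ --- informally, that the comparison angle at $x_{0}$ does not decrease as the endpoints are slid inward --- by iterating the one--parameter comparison over sub--triangles, possibly with an auxiliary non--branching hypothesis; if no clean such argument is available one can instead argue by continuity, reducing to small triangles, where $u_{i}(s)\to s$ and the mismatch evaporates. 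This is the technical heart of the lemma.

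For the converse I would run the development backwards: given $x,y_{1},y_{2}\in X$ with all mutual distances $\le\pi$ and geodesics $\sigma_{i}$ from $x$ to $y_{i}$, the developed paths in the sectors of angles $d(x,y_{i})$ are bona fide geodesics $\Gamma_{i}$ of $\Con(X)$ from $(x,1)$ to $(y_{i},1)$ of length $2\sin(d(x,y_{i})/2)=d_{\Con}((x,1),(y_{i},1))$, so concavity of $s\mapsto d_{\Con}(\Gamma_{1}(s),\Gamma_{2}(s))$ is available; converting cone distances back via $d_{\Con}((a,1),(b,1))=2\sin(d(a,b)/2)$ together with the identical identity in $\Con(S^{2})=\mathbb{R}^{3}$ then reads off the round--sphere triangle comparison for $X$. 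The same parametrisation mismatch reappears and is handled in the same manner; alternatively, one may observe that the space of directions of $\Con(X)$ at the apex is isometric to $(X,d)$ and try to invoke the principle that a link of a point in a Busemann--concave space carries a lower curvature bound $1$. Finally, it is worth recording that the statement is the Busemann counterpart of the familiar fact that the Euclidean cone is $CBB(0)$, resp.\ $CAT(0)$, exactly when the base is $CBB(1)$, resp.\ $CAT(1)$, and that if $X$ additionally has angles --- hence, by the remark above, is genuinely an Alexandrov space of curvature $\ge1$ --- the lemma follows from that classical cone theorem; the content here is that the extra hypothesis is unnecessary.
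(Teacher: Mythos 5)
Your approach is the same as the paper's: develop cone geodesics into planar sectors, use $\Con(S^2)=\mathbb{R}^3$ to read off a Euclidean comparison triangle for the cone triangle, and then peel off the concavity to a spherical side comparison on the base. The converse is likewise the same, run backwards through $d_{\Con}((a,1),(b,1))=2\sin(d(a,b)/2)$.

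The point you flag as the heart of the matter --- that the two cone geodesics sample their base geodesics at \emph{different} non-uniform rates $u_{1}(s)\ne u_{2}(s)$, so that the Busemann hypothesis, which is a same-proportion comparison $d(\sigma_{1}(t),\sigma_{2}(t))\ge d_{S^{2}}(\tilde{\sigma}_{1}(t),\tilde{\sigma}_{2}(t))$, does not hand you the two-parameter inequality $d(\sigma_{1}(s_{1}),\sigma_{2}(s_{2}))\ge d_{S^{2}}(\tilde{\sigma}_{1}(s_{1}),\tilde{\sigma}_{2}(s_{2}))$ directly --- is genuine, and it is not addressed by the paper either. The paper's argument takes $m_{1},m_{2}$ to be the (constant-speed) $t$-points of the base geodesics, then writes $(m_{1},s_{1}),(m_{2},s_{2})$ for ``the corresponding $t$-midpoints'' of the cone geodesics, but these two are different objects: the base coordinate of a cone $t$-midpoint from $(x_{3},r_{3})$ to $(x_{i},r_{i})$ is $\sigma_{i}(u_{i}(t))$, not $\sigma_{i}(t)$, and in general $u_{1}(t)\ne u_{2}(t)$ (even with $r_{1}=r_{2}=r_{3}$, unless $\alpha_{1}=\alpha_{2}$). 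This conflation is exactly the gap you describe; moreover the one-parameter condition really is weaker than the two-parameter one in the Busemann world (in a normed plane $\ell^{p}$ the one-parameter comparison holds with equality, while the two-parameter Euclidean comparison $\|s_{1}v_{1}-s_{2}v_{2}\|\ge|s_{1}\tilde{v}_{1}-s_{2}\tilde{v}_{2}|$ fails for $p$ close to $1$), so one cannot dodge the issue by invoking a general equivalence. Your proposed remedies (angle monotonicity in each leg separately, or a small-triangle/continuity reduction) are plausible directions, but neither is carried out, so the proposal as written shares the paper's gap rather than closing it. Still, noticing and articulating this defect is the most valuable content of your write-up; the remaining structure (including the converse and the degenerate apex cases) matches the paper and is fine.
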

\begin{rem*}
(1) The bound $\pi$ ensures that there is a comparison triangle.
By Theorem \ref{thm:Bonnet-Myers} this bound always holds if the
space is not $1$-dimensional.

(2) More generally, one can define general $k$-cones $\Con_{k}(X)$,
called spherical suspension for $k=1$ and elliptical cone for $k=-1$,
see \cite[Section 4.3]{Burago1992} and \cite[Definition 5.6]{Bridson1999}.
A similar proof would show that they have Busemann curvature bounded
below by $k$ iff $X$ has Busemann curvature bounded below by $1$
and diameter at most $\pi$. 

(3) Doing the proof backwards implies that $\Con_{k}(X)$ has Busemann
curvature bounded below by $k$ iff $X$ has Busemann curvature bounded
below by $1$. 
\end{rem*}
\begin{proof}
Let $x_{1},x_{2}$ and $x_{3}$ be points in $X$ and $m_{1}$ and
$m_{2}$ the $t$-midpoints of $(x_{1},x_{3})$ and resp. $(x_{2},x_{3})$.
The curvature bound of $X$ translates to 
\[
d(m_{1},m_{2})\ge d_{S^{2}}(\tilde{m}_{1},\tilde{m}_{2})
\]
where $\tilde{m}_{1}$ and $\tilde{m}_{2}$ are the $t$-midpoints
in the comparison triangle whose existence is ensured by the bound
on the diameter. But then $\cos(d(m_{1},m_{2}))\le\cos(d_{S^{2}}(\tilde{m}_{1},\tilde{m}_{2}))$.

Now let $(x_{i},r_{i})$ be three point in $\Con(X)$ and $(m_{1},s_{1})$
and $(m_{2},s_{2})$ be the corresponding $t$-midpoints. With the
help of the comparison space we obtain
\begin{eqnarray*}
t^{2}d((x_{1},r_{1}),(x_{2},r_{2}))^{2} & = & t^{2}d_{\mathbb{R}^{3}}((\tilde{x}_{1},r_{1}),(\tilde{x}_{2},r_{2}))^{2}\\
 & = & d_{\mathbb{R}^{3}}((\tilde{m}_{1},s_{1}),(\tilde{m}_{2},s_{2}))^{2}\\
 & = & s_{1}^{2}+s_{2}^{2}-2s_{1}s_{2}\cos(d_{S^{2}}(\tilde{m}_{1},\tilde{m}_{2}))\\
 & \le & s_{1}^{2}+s_{2}^{2}-2s_{1}s_{2}\cos(d(m_{1},m_{2}))\\
 & = & d((m_{1},s_{1}),(m_{2},s_{2}))^{2}.
\end{eqnarray*}

Assuming conversely that $\Con(X)$ is Busemann concave we see that
$\cos(d(m_{1},m_{2}))\le\cos(d(\tilde{m}_{1},\tilde{m}_{2}))$. Since
$\Con(X)$ is non-branching also $\diam X\le\pi$. Thus $d(\tilde{m}_{1},\tilde{m}_{2})\le\frac{\pi}{2}$
which implies by monotonicity of cosine on $[0,\frac{\pi}{2}]$ the
required comparison
\[
d(m_{1},m_{2})\ge d_{S^{2}}(\tilde{m}_{1},\tilde{m}_{2}).
\]
\end{proof}
An open problem is whether quotients via isometry actions of Busemann
concave spaces are still Busemann concave. The current proofs in the
Alexandrov setting rely heavily on the notion of angle which is not
present in the current setting. This would also imply that any non-negatively
curved Berwald space whose connection does not have a higher rank
symmetric factor is Busemann concave.  

\subsection{Busemann functions, lines and a splitting theorem\label{subsec:splitting}}

For non-negatively curved Riemannian manifolds the existence of a
line implies that the space splits, i.e. there is a metric spaces
$X'$ such that $X$ is isometric/diffeomorphic to $X'\times\mathbb{R}$.
A key point of the splitting theorem is the existence of a unique
line $\eta_{x}$ through every $x\in X$ which is parallel to a given
line $\gamma$. This is usually done by showing that the Busemann
functions $b_{\gamma^{\pm}}$ are affine and the rays asymptotic to
$\gamma^{\pm}$ can be glued to lines. In this section we show more
directly that the gluing property holds and that the space also splits
into a product. However, it is not clear whether the Busemann functions
associated to lines are affine or whether their level sets are convex. 

First the following useful lemma.
\begin{lem}
\label{lem:fraction-busemann}Let $\gamma^{+}:[0,\infty)\to X$ be
a ray then it holds 
\[
\lim_{t\to\infty}\frac{d(x,\gamma(t))}{t}=1.
\]
\end{lem}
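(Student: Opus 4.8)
The plan is to use the triangle inequality together with the fact that $b_\gamma$ is $1$-Lipschitz, pinning $\frac{d(x,\gamma(t))}{t}$ between two quantities that both tend to $1$.

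First I would record the two elementary bounds. On the one hand, the triangle inequality gives
\[
d(x,\gamma(t)) \le d(x,\gamma(0)) + d(\gamma(0),\gamma(t)) = d(x,\gamma(0)) + t,
\]
using that $\gamma$ is a ray (so $d(\gamma(0),\gamma(t)) = t$). Dividing by $t$ yields $\frac{d(x,\gamma(t))}{t} \le 1 + \frac{d(x,\gamma(0))}{t}$, and the right-hand side converges to $1$ as $t\to\infty$. This already gives $\limsup_{t\to\infty}\frac{d(x,\gamma(t))}{t} \le 1$.

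For the lower bound I would invoke the Busemann function. By definition $b_\gamma(x) = \lim_{t\to\infty}\bigl(t - d(\gamma(t),x)\bigr)$, and as noted in the excerpt the expression $t - d(\gamma(t),x)$ is non-decreasing in $t$; in particular it is bounded above by its limit, so $t - d(\gamma(t),x) \le b_\gamma(x)$ for all $t$, i.e.
\[
d(x,\gamma(t)) \ge t - b_\gamma(x).
\]
(One also needs $b_\gamma(x)$ finite, which follows from $|t - d(\gamma(t),x)| \le d(\gamma(0),x)$, again by the triangle inequality and $d(\gamma(0),\gamma(t))=t$.) Dividing by $t$ gives $\frac{d(x,\gamma(t))}{t} \ge 1 - \frac{b_\gamma(x)}{t} \to 1$, hence $\liminf_{t\to\infty}\frac{d(x,\gamma(t))}{t} \ge 1$. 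Combining the two bounds yields the claimed limit.

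There is no real obstacle here — the statement is a soft consequence of the triangle inequality and the monotonicity already established for the Busemann limit; the only minor care needed is to confirm $b_\gamma(x)\in\mathbb{R}$ (equivalently, that the defining limit is finite rather than $+\infty$), which is immediate from the two-sided estimate $-d(\gamma(0),x) \le t - d(\gamma(t),x) \le d(\gamma(0),x)$. Alternatively one could avoid mentioning $b_\gamma$ at all and just use $d(x,\gamma(t)) \ge d(\gamma(0),\gamma(t)) - d(\gamma(0),x) = t - d(\gamma(0),x)$ directly for the lower bound, which is perhaps the cleanest route.
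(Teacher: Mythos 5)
Your proof is correct and is essentially the paper's argument: the paper notes that $t-d(x,\gamma(t))$ is a bounded (non-decreasing) sequence converging to $b_\gamma(x)$, so dividing by $t$ and letting $t\to\infty$ immediately gives the claim. You have merely unpacked the same boundedness into an upper and lower estimate and presented it as a squeeze.
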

\begin{proof}
By definition $b_{\gamma}(x)$ is the limit of the non-decreasing
bounded sequence $t-d(x,\gamma(t))$. Thus 
\[
0=\lim_{t\to\infty}\frac{t-d(x,\gamma(t))}{t}=1-\lim_{t\to\infty}\frac{d(x,\gamma(t))}{t}.
\]
\end{proof}
The following proposition shows that moving in the ray direction induces
a natural expansion. In case of a line one may also move in the opposite
direction to show that this movement is an isometry, see Lemma \ref{lem:moving-isometry}
below.
\begin{prop}
\label{prop:Busemann-contraction}Let $\gamma:[0,\infty)\to X$ be
a ray, and $\eta$ and $\xi$ be rays asymptotic to $\gamma$ that
are generated by the same sequence $t_{n}\to\infty$. Then it holds
\[
d(\eta(t),\xi(s))\le d(\eta(t+a),\xi(s+a))
\]
for all $t,s,a\ge0$.
\end{prop}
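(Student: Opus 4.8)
The plan is to work with the asymptotic rays $\eta$ and $\xi$ via the defining sequences of geodesics $\eta_n$ from $\eta(0)$ to $\gamma(t_n)$ and $\xi_n$ from $\xi(0)$ to $\gamma(t_n)$, and to exploit Busemann concavity applied to the common endpoint $\gamma(t_n)$. Fix $t,s,a\ge 0$. For $n$ large the points $\eta(t),\eta(t+a)$ lie (up to the uniform-on-compacts convergence $\eta_n\to\eta$) on the geodesic $\eta_n$ from $\eta(0)$ to $\gamma(t_n)$, and similarly $\xi(s),\xi(s+a)$ lie on $\xi_n$ from $\xi(0)$ to $\gamma(t_n)$. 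Reparametrising both $\eta_n$ and $\xi_n$ as constant-speed geodesics on $[0,1]$ ending at the common point $x := \gamma(t_n)$, Busemann concavity of the space gives that $u\mapsto d(\eta_n(u),\xi_n(u))$ is concave on $[0,1]$, where here I abuse notation and write $\eta_n(u),\xi_n(u)$ for the $[0,1]$-parametrisations terminating at $x$.

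The key point is to compare parameter values. Write $L_n = d(\eta(0),\gamma(t_n))$ and $M_n = d(\xi(0),\gamma(t_n))$; then, in arclength from $\gamma(t_n)$, the point $\eta(t)$ sits at arclength $L_n - t$ and $\eta(t+a)$ at arclength $L_n - t - a$ (and likewise for $\xi$ with $M_n$). In the $[0,1]$-parametrisation ending at $x=\gamma(t_n)$, the point $\eta(t)$ corresponds to parameter $1 - (L_n-t)/L_n = t/L_n$, and $\eta(t+a)$ to parameter $(t+a)/L_n$; similarly $\xi(s)\leftrightarrow s/M_n$ and $\xi(s+a)\leftrightarrow (s+a)/M_n$. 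Concavity of $u\mapsto d(\eta_n(u),\xi_n(u))$, which vanishes at $u=1$ (both curves end at $x$), forces this function to be non-decreasing in $u$ up to the point where... — and this monotonicity, evaluated at the pair of parameters corresponding to $(\eta(t),\xi(s))$ versus $(\eta(t+a),\xi(s+a))$, should give $d(\eta(t),\xi(s)) \le d(\eta(t+a),\xi(s+a)) + o(1)$, after which we let $n\to\infty$.

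The main obstacle, and the place that needs care, is that the two geodesics $\eta_n$ and $\xi_n$ are parametrised over different lengths $L_n$ and $M_n$, so "concavity along $[0,1]$" does not directly compare the mixed pair $(t/L_n,\, s/M_n)$ with $(t+a)/L_n,\, (s+a)/M_n)$ — the shifts $a/L_n$ and $a/M_n$ differ. The resolution is Lemma \ref{lem:fraction-busemann}: $L_n/t_n\to 1$ and $M_n/t_n\to 1$, so $L_n - M_n \to b_\gamma(\xi(0)) - b_\gamma(\eta(0))$ stays bounded while $L_n,M_n\to\infty$; hence $a/L_n - a/M_n \to 0$. Thus asymptotically the two reparametrisations move at the same rate, and a careful bookkeeping of the concavity inequality — using that a concave function on $[0,1]$ vanishing at the right endpoint has non-positive slope only beyond its maximum, and estimating the error from the mismatch $|a/L_n - a/M_n|\cdot(\text{Lipschitz bound})$ which is $O(a/t_n)$ — yields the claim in the limit. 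One should also check the case $d(\eta(t),\xi(s))=0$ or degenerate geodesics separately, but these are immediate.

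Alternatively, a cleaner route avoids the length mismatch entirely: apply Busemann concavity on the interval $[0,\tau]$ with the two geodesics $r\mapsto \eta(t+a-r)$ and $r\mapsto\xi(s+a-r)$ reparametrised to have a common terminal point — but since in general $\eta$ and $\xi$ need not meet, one instead uses the geodesics $\eta_n,\xi_n$ restricted near their common endpoint $\gamma(t_n)$, applies concavity of $r\mapsto d(\eta_n(\text{arclength } L_n - a + r\text{ from }\eta(0)),\, \xi_n(\ldots))$ on the sub-segment, and passes to the limit using $\eta_n\to\eta$, $\xi_n\to\xi$ uniformly on $[0,\max\{t,s\}+a]$ together with Lemma \ref{lem:fraction-busemann}. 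Either way, the substance is: concavity forces the inter-geodesic distance to be monotone as one retreats from the common point at infinity, and the two retreat-rates agree in the limit by Lemma \ref{lem:fraction-busemann}.
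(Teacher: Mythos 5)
Your overall strategy---apply Busemann concavity with apex at $\gamma(t_n)$, and use Lemma \ref{lem:fraction-busemann} to absorb the mismatch between the lengths of $\eta_n$ and $\xi_n$ in the limit---is indeed the idea behind the paper's proof. However, two of the concrete steps you propose do not work, and the single step that actually makes the paper's argument go through is absent.

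First, a concave function on $[0,1]$ vanishing at $u=1$ need not be non-decreasing (consider $u\mapsto u(1-u)$), so ``concavity forces this function to be non-decreasing'' is false. What concavity with $\tilde d_n(1)=0$ yields is the contraction estimate $\tilde d_n(u)\ge\frac{1-u}{1-u_0}\,\tilde d_n(u_0)$ for $u\ge u_0$ (equivalently, $\tilde d_n(u)/(1-u)$ is non-decreasing); that, not monotonicity, is the inequality one must use.

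Second, and more seriously, Busemann concavity controls $u\mapsto d(\eta_n(u),\xi_n(u))$ with the \emph{same} parameter $u$ plugged into both curves, whereas the quantity you need to bound is $d(\eta_n(t/L_n),\,\xi_n(s/M_n))$, evaluated at two \emph{different} parameters $t/L_n\ne s/M_n$. Replacing $s/M_n$ by $t/L_n$ costs, by the triangle inequality, a term of order $M_n\,|s/M_n - t/L_n|\to|s-t|$, which does not vanish as $n\to\infty$; it is not the harmless $O(a/t_n)$ bookkeeping error you identified. Your proposal, as written, only recovers the conclusion in the special case $s=t$. The paper sidesteps this precisely by truncating the two geodesics to \emph{different} sub-lengths before rescaling: it sets $\bar b_n=b_n-t$ and $\bar c_n=c_n-s$, takes the hinge with apex $\gamma(t_n)$ whose two legs are the sub-geodesics of lengths $\bar b_n$ and $\bar c_n$ ending exactly at $\eta_n(t)$ and $\xi_n(s)$, reparametrises that hinge on $[0,1]$, and applies the contraction with a single factor $\lambda_n=1-a/\bar b_n$. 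Then $\bar\eta_n(\lambda_n\bar b_n)=\eta_n(t+a)$ exactly, while $\bar\xi_n(\lambda_n\bar c_n)=\xi_n(s+(\bar c_n/\bar b_n)a)$ differs from $\xi_n(s+a)$ by $(\bar c_n/\bar b_n-1)a\to 0$ by Lemma \ref{lem:fraction-busemann}. This differential truncation is the step your write-up is missing: your ``cleaner route'' gestures at sub-segments near $\gamma(t_n)$, but the segment you actually write down (arclength window $[L_n-a,L_n]$) does not contain the points $\eta(t),\eta(t+a)$ for typical $t$.
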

\begin{proof}
From the assumption there are sequences $(\eta_{n})_{n\in\mathbb{N}}$
and $(\xi_{n})_{n\in\mathbb{N}}$ where $\eta_{n}$ and resp. $\xi_{n}$
are geodesics from $\eta(0)$ to $\gamma(t_{n})$ and resp. $\xi(0)$
to $\gamma(t_{n})$ such that for any $t\ge0$ it holds $\eta_{n}(t)\to\eta(t)$
and $\xi_{n}(t)\to\xi(t)$. Now fix $s,t,a\ge0$ and set $b_{n}=d(\eta(0),\gamma(t_{n}))$
and $c_{n}=d(\xi(0),\gamma(t_{n}))$. Define geodesics $\bar{\eta}_{n}(r)=\eta_{n}(b_{n}-r)$
and $\bar{\xi}_{n}(r)=\xi_{n}(c_{n}-r)$. Also define $\bar{b}_{n}=b_{n}-t$
and $\bar{c}_{n}=c_{n}-s$ and note that $\bar{b}_{n},\bar{c}_{n}>0$
for sufficiently large $n$. 

By Busemann concavity applied the hinge formed by $\bar{\eta}_{n}$
and $\bar{\xi}_{n}$ with contraction factor $\lambda_{n}=1-\nicefrac{a}{\bar{b}_{n}}$
we have
\begin{eqnarray*}
d(\bar{\eta}_{n}(\lambda_{n}\bar{b}_{n}),\bar{\xi}_{n}(\lambda_{n}\bar{c}_{n})) & \ge & \lambda_{n}d(\bar{\eta}_{n}(\bar{b}_{n}),\bar{\xi}_{n}(\bar{c}_{n}))\\
 & = & \lambda_{n}d(\eta_{n}(t),\xi_{n}(s)).
\end{eqnarray*}

Note that 
\begin{eqnarray*}
\bar{\eta}_{n}(\lambda_{n}\bar{b}_{n}) & = & \eta_{n}(t+a)\\
\bar{\xi}_{n}(\lambda_{n}\bar{c}_{n}) & = & \xi_{n}\left(s+\frac{\bar{c}_{n}}{\bar{b}_{n}}a\right).
\end{eqnarray*}
By Lemma \ref{lem:fraction-busemann} we have $\frac{\bar{c}_{n}}{\bar{b}_{n}}\to1$
so that $\bar{\xi}_{n}(\lambda_{n}\bar{c}_{n})\to\xi(s+a)$ and hence
\[
d(\eta(t),\xi(s))\le d(\eta(t+a),\xi(s+a)).
\]
\end{proof}
We will now prove the splitting theorem in a sequence of lemmas. Assume
in the following that $(X,d)$ is Busemann concave and contains a
line $\gamma:\mathbb{R}\to X$. Denote by $\eta_{x}^{\pm}$ the rays
asymptotic to $\gamma^{\pm}$ and let 
\[
\eta_{x}(t)=\begin{cases}
\eta_{x}^{+}(t) & t\ge0\\
\eta_{x}^{-}(-t) & t\le0.
\end{cases}
\]

\begin{lem}
\label{lem:bi-asym-parallel}The Busemann functions $b_{\gamma^{\pm}}$
are affine when restricted to $\eta_{x}$. Furthermore, $\eta_{x}$
is a bi-asymptotic to $\gamma$.
\end{lem}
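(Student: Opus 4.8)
The plan is to show first that $b_{\gamma^+}$ restricted to $\eta_x$ is affine, then deduce the same for $b_{\gamma^-}$, and finally combine these with the inequality $b_{\gamma^+}+b_{\gamma^-}\le 0$ to conclude that $\eta_x$ is bi-asymptotic to $\gamma$. For the affineness of $b_{\gamma^+}$ along $\eta_x^+$ we already know (from the discussion preceding Definition of asymptotic rays) that $b_{\gamma^+}(\eta_x^+(t)) = t + b_{\gamma^+}(x)$ for $t\ge 0$, so the only real work is on the $\eta_x^-$ side: I must show $b_{\gamma^+}(\eta_x^-(t)) = -t + b_{\gamma^+}(x)$ for $t\ge 0$, i.e. that moving along the backward asymptotic ray decreases $b_{\gamma^+}$ at unit rate.

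First I would establish the inequality $b_{\gamma^+}(\eta_x^-(t)) \le b_{\gamma^+}(x) - t$. This follows from the $1$-Lipschitz property of Busemann functions together with a lower bound $d(\eta_x^-(t), \gamma^+(s)) \ge s + t + b_{\gamma^+}(x) - o(1)$ as $s\to\infty$; such a lower bound comes from Busemann concavity applied to the hinge at $\gamma^+(s_n)$ (for an appropriate sequence), exactly in the spirit of Proposition \ref{prop:Busemann-contraction}, using that $\eta_x^-$ is asymptotic to $\gamma^-$ and hence "points away" from $\gamma^+$. For the reverse inequality $b_{\gamma^+}(\eta_x^-(t)) \ge b_{\gamma^+}(x) - t$, I would use the triangle inequality $d(\eta_x^-(t), \gamma^+(s)) \le d(\eta_x^-(t), x) + d(x, \gamma^+(s)) = t + d(x,\gamma^+(s))$ directly, which gives $b_{\gamma^+}(\eta_x^-(t)) = \lim_s \big(s - d(\eta_x^-(t),\gamma^+(s))\big) \ge \lim_s\big(s - t - d(x,\gamma^+(s))\big) = b_{\gamma^+}(x) - t$. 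Combining the two bounds yields $b_{\gamma^+}(\eta_x(t)) = b_{\gamma^+}(x) + t$ for all $t\in\mathbb{R}$, i.e. $b_{\gamma^+}$ is affine (indeed linear with slope $1$) along $\eta_x$. By the symmetric argument, swapping the roles of $\gamma^+$ and $\gamma^-$ and of $\eta_x^+$ and $\eta_x^-$, the function $b_{\gamma^-}$ is affine along $\eta_x$ with $b_{\gamma^-}(\eta_x(t)) = b_{\gamma^-}(x) - t$.

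For the last clause, bi-asymptoticity of $\eta_x$ to $\gamma$: by construction $\eta_x^+$ is asymptotic to $\gamma^+$ and $\eta_x^-$ is asymptotic to $\gamma^-$, which is exactly the definition of $\eta_x$ being bi-asymptotic to $\gamma$ — but one should check $\eta_x$ is actually a line, i.e. a global isometric embedding of $\mathbb{R}$, not just two rays glued at $x$. This is where I expect the main obstacle: a priori $d(\eta_x^-(t), \eta_x^+(s))$ could be strictly less than $t+s$. Here the affineness just proved does the job: since $b_{\gamma^+}$ is $1$-Lipschitz and $b_{\gamma^+}(\eta_x^-(t)) = b_{\gamma^+}(x) - t$ while $b_{\gamma^+}(\eta_x^+(s)) = b_{\gamma^+}(x) + s$, we get $d(\eta_x^-(t), \eta_x^+(s)) \ge |b_{\gamma^+}(\eta_x^+(s)) - b_{\gamma^+}(\eta_x^-(t))| = s+t$, and the reverse is the triangle inequality through $x$; hence $\eta_x$ is a line, and it is bi-asymptotic to $\gamma$ by construction. (The remark in the text that, under Busemann concavity, bi-asymptotic lines are in fact parallel — Lemma \ref{lem:moving-isometry} territory — is a further statement and is not needed here.)
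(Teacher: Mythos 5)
Your proposal follows the paper's proof essentially verbatim: the key inequality $d(\eta_x^-(t),\gamma^+(s))\ge d(x,\gamma^+(s+t))$ from Busemann concavity (i.e.\ Proposition~\ref{prop:Busemann-contraction}) gives $b_{\gamma^+}(\eta_x^-(t))\le b_{\gamma^+}(x)-t$, the triangle inequality (equivalently $1$-Lipschitzness of $b_{\gamma^+}$) gives the reverse, and the resulting affineness combined with $1$-Lipschitzness squeezes $d(\eta_x^-(s),\eta_x^+(t))=s+t$ so that $\eta_x$ is a genuine line. One small slip: the hinge in the concavity argument sits at $\gamma(-r)$ for $r\to\infty$ (the far end of $\gamma^-$, since $\eta_x^-$ is a limit of geodesics from $x$ to $\gamma(-r_n)$), not at $\gamma^+(s_n)$; relatedly, your displayed lower bound should read $d(\eta_x^-(t),\gamma^+(s))\ge s+t-b_{\gamma^+}(x)-o(1)$, with a minus sign on $b_{\gamma^+}(x)$.
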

\begin{proof}
We only need to prove that 
\[
b_{\gamma^{+}}(\eta_{x}^{-}(s))=b_{\gamma^{+}}(x)-s
\]
for $s\ge0$. Indeed, this will show that $b_{\gamma^{+}}$ is affine
on $\eta_{x}$. A similar argument also works for $b_{\gamma^{-}}$.

From the previous lemma we have 
\[
d(\eta_{x}^{-}(s),\gamma(t))\ge d(\eta_{x}^{-}(0),\gamma(t+s)).
\]
 Thus 
\[
t-d(\eta_{x}^{-}(s),\gamma(t))\le t+s-d(x,\gamma(t+s))-s.
\]
Taking the limit as $t\to\infty$ we obtain 
\[
b_{\gamma^{+}}(\eta_{x}^{-}(s))\le b_{\gamma^{+}}(x)-s.
\]
But then 
\[
s\le b_{\gamma^{+}}(x)-b_{\gamma^{+}}(\eta_{x}^{-}(s))\le d(x,\eta^{-}(s))=s
\]
and thus $b_{\gamma^{+}}(\eta_{x}^{-}(s))=b_{\gamma^{+}}(x)-s$. 

This also implies that for $t,s\ge0$ it holds 
\begin{eqnarray*}
b_{\gamma^{+}}(\eta_{x}^{+}(t))-b_{\gamma^{+}}(\eta_{x}^{-}(s)) & = & s+t\\
 & \le & d(\eta_{x}^{-}(s),\eta_{x}^{+}(t))\\
 & \le & d(\eta_{x}^{-}(s),x)+d(x,\eta_{x}^{+}(t))=s+t.
\end{eqnarray*}
Therefore, $\eta_{x}$ is a line bi-asymptotic to $\gamma$.
\end{proof}
\begin{lem}
Through each point there is exactly one line parallel to $\gamma$.
\end{lem}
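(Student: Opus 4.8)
The plan is to establish existence and uniqueness separately. Existence is essentially already in hand: the candidate is the line $\eta_{x}$, which by Lemma \ref{lem:bi-asym-parallel} passes through $x$, is bi-asymptotic to $\gamma$, and has $b_{\gamma^{+}}$ affine along it; since Busemann concavity forces $(X,d)$ to be non-branching, and $b_{\gamma^{+}}$ restricted to $\eta_{x}$ is affine, $\eta_{x}$ is in fact parallel to $\gamma$ (as observed earlier in the discussion of bi-asymptotic lines). So the real content is uniqueness: if $\sigma:\mathbb{R}\to X$ is any line with $\sigma(0)=x$ that is parallel to $\gamma$, I must show $\sigma=\eta_{x}$.

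First I extract the consequence of parallelism that I will use. For each $s\in\mathbb{R}$ the shifted line $\sigma_{(s)}$ is bi-asymptotic to $\gamma$, so $\sigma_{(s)}^{\pm}$ is a ray asymptotic to $\gamma^{\pm}$; inserting these rays into the identity $b_{\gamma}(\eta(t))=t+b_{\gamma}(\eta(0))$ for rays $\eta$ asymptotic to $\gamma$, and evaluating at the parameter which returns to $\sigma(0)=x$, one obtains
\[
b_{\gamma^{+}}(\sigma(t))=t+b_{\gamma^{+}}(x),\qquad b_{\gamma^{-}}(\sigma(t))=-t+b_{\gamma^{-}}(x)\qquad(t\in\mathbb{R}).
\]
For $\eta_{x}$ only the one-sided versions are needed, and these are immediate from $\eta_{x}^{\pm}$ being asymptotic to $\gamma^{\pm}$: $b_{\gamma^{+}}(\eta_{x}(t))=t+b_{\gamma^{+}}(x)$ and $b_{\gamma^{-}}(\eta_{x}(-t))=t+b_{\gamma^{-}}(x)$ for $t\ge0$.

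Next comes the rigidity step. Since $b_{\gamma^{+}}$ is $1$-Lipschitz, for every $t\ge0$
\[
2t=b_{\gamma^{+}}(\eta_{x}(t))-b_{\gamma^{+}}(\sigma(-t))\le d(\eta_{x}(t),\sigma(-t))\le d(\eta_{x}(t),x)+d(x,\sigma(-t))=2t,
\]
so $d(\eta_{x}(t),\sigma(-t))=2t$, and symmetrically $d(\eta_{x}(-t),\sigma(t))=2t$ using $b_{\gamma^{-}}$. The first equality says that the broken path running from $\eta_{x}(t)$ back along $\eta_{x}$ to $x$ and then from $x=\sigma(0)$ along $\sigma$ to $\sigma(-t)$ realizes the distance $d(\eta_{x}(t),\sigma(-t))$, hence is a geodesic; on its initial half it coincides with the geodesic segment of $\eta_{x}$ from $\eta_{x}(t)$ to $\eta_{x}(-t)$, so non-branching forces these two geodesics to agree on the whole segment, which gives $\sigma(u)=\eta_{x}(u)$ for all $u\in[-t,0]$. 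Letting $t\to\infty$ yields $\sigma=\eta_{x}$ on $(-\infty,0]$; the symmetric argument applied to the second equality gives agreement on $[0,\infty)$, so $\sigma=\eta_{x}$.

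The main obstacle, I expect, is the need to avoid any appeal to uniqueness of asymptotic rays: the excerpt itself flags that it is unclear whether a bi-asymptotic line must be parallel, and Busemann concavity alone only makes $t\mapsto d(\eta_{x}(t),\sigma(t))$ concave with value $0$ at $t=0$, which is not enough to conclude that it vanishes. The device that circumvents this is to couple the two ends of the lines through the Busemann functions $b_{\gamma^{\pm}}$ of the ray $\gamma^{\pm}$ — genuinely $1$-Lipschitz functions, and the only Busemann functions here whose behaviour we actually control — which squeezes $d(\eta_{x}(t),\sigma(-t))$ to be exactly $2t$, after which non-branching finishes the job. It is worth being careful to use the full force of ``parallel,'' i.e.\ that \emph{every} shift of $\sigma$ is bi-asymptotic to $\gamma$, since that is precisely what makes the affine formulas for $b_{\gamma^{\pm}}$ along $\sigma$ valid.
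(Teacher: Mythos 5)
Your uniqueness argument is correct and is driven by the same mechanism as the paper's proof: use the $1$-Lipschitz bound $|b_{\gamma^{+}}(p)-b_{\gamma^{+}}(q)|\le d(p,q)$ together with the affine values of $b_{\gamma^{\pm}}$ along the two curves to force a concatenated pair of half-lines to be distance-realizing, and then apply non-branching. The difference is in which pieces you glue and what you assume of the competitor. You concatenate $\eta_{x}^{+}$ with $\sigma^{-}$, which requires the affine formula for $b_{\gamma^{+}}$ on all of $\sigma$ — available only because $\sigma$ is assumed parallel, so that every shift of $\sigma$ is bi-asymptotic. The paper instead concatenates $\eta_{x}^{-}$ (whose affine formula comes from Lemma \ref{lem:bi-asym-parallel}) with an arbitrary ray $\tilde{\eta}$ asymptotic to $\gamma^{+}$ from $x$, using only the one-sided identity $b_{\gamma^{+}}(\tilde{\eta}(s))=s+b_{\gamma^{+}}(x)$. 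That yields the formally stronger statement that the asymptotic ray from $x$ to $\gamma^{+}$ is unique, hence the bi-asymptotic line through $x$ is unique; repeating the argument at each $\eta_{x}(t)$ then gives parallelism of $\eta_{x}$ directly, without the appeal to the informal ``note'' in the text that your existence step leans on. Both routes are sound, but the paper's packaging handles existence and uniqueness in one stroke and does not invoke the full strength of parallelism on the competitor.
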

\begin{proof}
By non-branching there is at most one bi-asymptotic line through each
$x$. Indeed, assume $\eta$ is a line through $x$ such that $b_{\gamma^{+}}$
is affine along $\eta$ and let $\tilde{\eta}$ be a ray which is
asymptotic to $\gamma^{+}$. Then it is easy to see that $b_{\gamma^{+}}$
is affine on $\eta'=\eta^{-}\cup\tilde{\eta}$ and thus $d(\eta^{-}(t),\tilde{\eta}(s))=t+s$.
But then by non-branching assumption we must have $\eta=\eta'$. The
same argument also show that $\eta$ is the unique line bi-asymptotic
to $\gamma$ that starts at $\eta(t)$ for any $t\in\mathbb{R}$.
Hence $\eta$ is parallel to $\gamma$.
\end{proof}
\begin{lem}
\label{lem:moving-isometry}For any $x,y\in X$ and $t,s,a\in\mathbb{R}$
it holds 
\[
d(\eta_{x}(t+a),\eta_{y}(s+a))=d(\eta_{x}(t),\eta_{y}(s)).
\]
\end{lem}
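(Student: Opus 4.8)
The plan is to derive the stated identity by applying Proposition~\ref{prop:Busemann-contraction} twice: once to the forward rays of $\eta_{x}$ and $\eta_{y}$, which yields one inequality, and once to their backward rays, which yields the reverse one.

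First I would assemble the ingredients needed to make Proposition~\ref{prop:Busemann-contraction} applicable. By Lemma~\ref{lem:bi-asym-parallel} and the previous lemma, $\eta_{x}$ is the unique line through $x$ parallel to $\gamma$, so for every $c\in\mathbb{R}$ the shifted line $(\eta_{x})_{(c)}$ is bi-asymptotic to $\gamma$; hence $r\mapsto\eta_{x}(c+r)$ is a ray asymptotic to $\gamma^{+}$ and $r\mapsto\eta_{x}(c-r)$ is a ray asymptotic to $\gamma^{-}$, and similarly for $\eta_{y}$. Moreover, the argument proving the previous lemma shows that from any point $z$ there is exactly one ray asymptotic to $\gamma^{+}$ (and one asymptotic to $\gamma^{-}$): any ray asymptotic to $\gamma^{+}$ at $z$, glued to the reverse of $\eta_{z}$, produces a line along which $b_{\gamma^{+}}$ is affine, and non-branching (which holds by Busemann concavity) forces it to coincide with $\eta_{z}$. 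Consequently, given any sequence $t_{n}\to\infty$, properness lets us choose geodesics from $\eta_{x}(c)$ and from $\eta_{y}(c)$ to $\gamma(t_{n})$ and pass to a common subsequence along which both converge; by the uniqueness just noted, the two limit rays must be $r\mapsto\eta_{x}(c+r)$ and $r\mapsto\eta_{y}(c+r)$. Thus these two rays are generated by one and the same sequence, so Proposition~\ref{prop:Busemann-contraction} applies to them, and symmetrically to the $\gamma^{-}$-asymptotic rays $r\mapsto\eta_{x}(c-r)$ and $r\mapsto\eta_{y}(c-r)$.

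Now fix $x,y\in X$, $t,s\in\mathbb{R}$ and $a\ge0$. Applying Proposition~\ref{prop:Busemann-contraction} to the rays $r\mapsto\eta_{x}(t+r)$ and $r\mapsto\eta_{y}(s+r)$ (asymptotic to $\gamma^{+}$, with a common generating sequence), using the parameters $0,0,a$ in the notation of that proposition, gives
\[
d(\eta_{x}(t),\eta_{y}(s))\le d(\eta_{x}(t+a),\eta_{y}(s+a)).
\]
Applying it instead to the rays $r\mapsto\eta_{x}(t+a-r)$ and $r\mapsto\eta_{y}(s+a-r)$ (asymptotic to $\gamma^{-}$), again with parameters $0,0,a$, gives
\[
d(\eta_{x}(t+a),\eta_{y}(s+a))\le d(\eta_{x}(t),\eta_{y}(s)).
\]
Combining the two yields equality for all $t,s\in\mathbb{R}$ and all $a\ge0$; the case $a<0$ follows immediately by replacing $(t,s,a)$ with $(t+a,s+a,-a)$.

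I expect the only genuine obstacle to be the verification that Proposition~\ref{prop:Busemann-contraction} may be invoked, i.e.\ that the relevant forward (resp.\ backward) rays emanating from $x$ and from $y$ can be generated by a single sequence $t_{n}\to\infty$; the rest is bookkeeping of parameters. This step rests on the uniqueness of asymptotic rays, and this is precisely where the non-branching property --- a consequence of Busemann concavity --- is used.
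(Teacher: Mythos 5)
Your proof is correct and follows essentially the same route as the paper's: establish that the forward and backward rays of $\eta_x$ and $\eta_y$ are each generated by a common sequence $t_n\to\infty$ (via the uniqueness argument from the preceding lemma), then apply Proposition~\ref{prop:Busemann-contraction} once with $\gamma^+$ and once with $\gamma^-$ to get the two opposing inequalities. Your write-up is rather more careful than the paper's very terse argument — in particular you spell out why uniqueness of the asymptotic rays forces the approximating geodesics to converge, and you handle the sign of $a$ explicitly — but the underlying idea is identical.
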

\begin{proof}
Observe that uniqueness of the lines $\eta_{x}$ and $\eta_{y}$ through
$x$ and resp. $y$ implies that for any $t_{n}\to\infty$ the sequences
$(\eta_{n})_{n\in\mathbb{N}}$ and $(\xi_{n})_{n\in\mathbb{N}}$ connecting
$x$ and $\gamma(t_{n})$ and resp. $y$ and $\gamma(t_{n})$ converge
to $\eta$ and resp. $\xi$. Thus we can apply Preposition \ref{prop:Busemann-contraction}
either with the ray $\gamma^{+}$ or with the ray $\gamma^{-}$ to
conclude 
\[
d(\eta(t),\xi(s))=d(\eta(t+a),\xi(s+a)).
\]
\end{proof}
This means that moving along the lines induces an isometry. In particular,
all level sets of $b_{\gamma^{+}}$ are isometric with isometry generated
by moving along the parallel lines.
\begin{cor}
Assume $(X,d)$ is a Busemann concave proper metric space. If through
every point $x\in X$ there is a line connecting $x$ with some fixed
$x_{0}$ then $(X,d)$ is homogeneous, i.e. for every $x,y\in X$
there is an isometry $g_{xy}\in\operatorname{Isom}(X,d)$ such that
$g_{xy}(x)=y$. 
\end{cor}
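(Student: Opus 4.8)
The plan is to feed each line granted by the hypothesis into the flow-along-parallels construction developed in Section~\ref{subsec:splitting}. Fix the base point $x_{0}$, and let $x\in X$ with $x\neq x_{0}$ be arbitrary. By assumption there is a line $\gamma^{x}:\mathbb{R}\to X$ whose image contains both $x_{0}$ and $x$; parametrize it so that $\gamma^{x}(0)=x_{0}$ and $\gamma^{x}(r)=x$ with $r=d(x_{0},x)$. Since $(X,d)$ is Busemann concave and contains the line $\gamma^{x}$, everything in Section~\ref{subsec:splitting} applies with $\gamma$ replaced by $\gamma^{x}$: through every $z\in X$ there is a unique line $\eta^{x}_{z}$ parallel to $\gamma^{x}$ with $\eta^{x}_{z}(0)=z$, and Lemma~\ref{lem:moving-isometry} gives, for every $a\in\mathbb{R}$, that the map $\Phi^{x}_{a}:z\mapsto\eta^{x}_{z}(a)$ is distance preserving, i.e. $d(\Phi^{x}_{a}(z),\Phi^{x}_{a}(w))=d(z,w)$ for all $z,w\in X$.

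The next step is to upgrade $\Phi^{x}_{a}$ from a distance-preserving self-map to an element of $\operatorname{Isom}(X,d)$, which amounts to surjectivity. By the uniqueness of the line parallel to $\gamma^{x}$ through a given point, the parallel line through $\eta^{x}_{z}(a)$ is $\eta^{x}_{z}$ itself with shifted parameter, so $\eta^{x}_{\eta^{x}_{z}(a)}(t)=\eta^{x}_{z}(t+a)$; hence $\Phi^{x}_{-a}\circ\Phi^{x}_{a}=\operatorname{id}$ and $\Phi^{x}_{a}\in\operatorname{Isom}(X,d)$ with inverse $\Phi^{x}_{-a}$. Moreover $\eta^{x}_{x_{0}}=\gamma^{x}$, since the unique parallel line through $x_{0}$ is $\gamma^{x}$ itself, so $\Phi^{x}_{r}(x_{0})=\gamma^{x}(r)=x$. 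Setting $g_{x_{0},x}:=\Phi^{x}_{d(x_{0},x)}$ (and $g_{x_{0},x_{0}}:=\operatorname{id}$), we obtain for every $x\in X$ an isometry of $X$ sending $x_{0}$ to $x$.

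Finally, for arbitrary $x,y\in X$ put $g_{xy}:=g_{x_{0},y}\circ(g_{x_{0},x})^{-1}$. This is an isometry of $(X,d)$ and $g_{xy}(x)=g_{x_{0},y}(x_{0})=y$, which is precisely the asserted homogeneity.

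Beyond the bookkeeping, the only point that deserves care is that the entire apparatus of Section~\ref{subsec:splitting} is applied to the chosen line $\gamma^{x}$ rather than to a single line fixed once and for all; this is harmless because that apparatus uses nothing but Busemann concavity together with the presence of the line in question. The mildly delicate step — the one I would write out most carefully — is the surjectivity of $\Phi^{x}_{a}$, which genuinely relies on the uniqueness of parallel lines through each point; without it one would be left with a distance-preserving map that need not be onto, hence not obviously a member of $\operatorname{Isom}(X,d)$.
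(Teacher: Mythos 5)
Your proof is correct and fleshes out precisely the argument the paper leaves implicit (the paper states this corollary immediately after Lemma~\ref{lem:moving-isometry} with the remark that ``moving along the lines induces an isometry'' and gives no further details). You apply the splitting-section machinery to the line $\gamma^{x}$ through $x_{0}$ and $x$, use Lemma~\ref{lem:moving-isometry} for distance-preservation of $\Phi^{x}_{a}$, and correctly identify the one nontrivial point — surjectivity — which you dispatch via uniqueness of parallel lines; this is the same route the paper intends.
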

We are now able to prove the bi-Lipschitz splitting theorem.
\begin{thm}
[Splitting Theorem]\label{thm:Splitting}Let $(X,d)$ a complete
Busemann concave space and assume $X$ contains a line $\gamma:\mathbb{R}\to X$.
Then through every $x\in X$ there is a unique line parallel $\eta$
to $\gamma$ and $(X,d)$ is bi-Lipschitz to a metric space $(X'\times\mathbb{R},\tilde{d})$
where $\tilde{d}$ is a product metric. 
\end{thm}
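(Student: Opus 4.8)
The plan is to exhibit an explicit bi-Lipschitz homeomorphism $\Phi\colon X'\times\mathbb{R}\to X$, where $X':=b_{\gamma^{+}}^{-1}(0)$ carries the restricted metric $d_{X'}=d|_{X'\times X'}$; note $X'\ne\emptyset$ since $b_{\gamma^{+}}(\gamma(0))=0$. For $x'\in X'$ let $\eta_{x'}$ be the unique line through $x'$ parallel to $\gamma$ furnished by the lemma above, parametrised so that $\eta_{x'}(0)=x'$; by Lemma~\ref{lem:bi-asym-parallel} the function $b_{\gamma^{+}}$ is affine of slope $1$ along it, i.e. $b_{\gamma^{+}}(\eta_{x'}(t))=t$. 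Set $\Phi(x',t):=\eta_{x'}(t)$ and equip $X'\times\mathbb{R}$ with the product metric $\tilde d((x',t),(y',s))=\sqrt{d_{X'}(x',y')^{2}+|t-s|^{2}}$.

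First I would check that $\Phi$ is a bijection. For surjectivity, given $y\in X$ the unique parallel line $\ell$ through $y$ satisfies $b_{\gamma^{+}}(\ell(r))=b_{\gamma^{+}}(y)+r$, so $\ell$ meets $X'$ at exactly one point $x'$; by uniqueness of parallel lines $\ell$ agrees, as a $b_{\gamma^{+}}$-parametrised curve, with $\eta_{x'}$, hence $\Phi(x',b_{\gamma^{+}}(y))=y$. For injectivity, if $\Phi(x',t)=\Phi(y',s)=z$ then $\eta_{x'}$ and $\eta_{y'}$ are parallel lines through $z$, so they coincide; reading off the $b_{\gamma^{+}}$-value gives $t=s$, and then $x'=\eta_{x'}(0)=\eta_{y'}(0)=y'$.

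The metric comparison is where the real content lies, though it is short. Using Lemma~\ref{lem:moving-isometry} to translate along the lines, $d(\Phi(x',t),\Phi(y',s))=d(\eta_{x'}(a),y')$ with $a:=|t-s|$. The upper bound is just the triangle inequality: $d(\eta_{x'}(a),y')\le d(\eta_{x'}(a),x')+d(x',y')=a+d_{X'}(x',y')$. For the lower bound I would combine two inequalities: from $d(p,q)\ge b_{\gamma^{+}}(p)-b_{\gamma^{+}}(q)$ (immediate from the definition of the Busemann function) we get $d(\eta_{x'}(a),y')\ge a$; and from $d_{X'}(x',y')\le d(x',\eta_{x'}(a))+d(\eta_{x'}(a),y')=a+d(\eta_{x'}(a),y')$ we get $d(\eta_{x'}(a),y')\ge d_{X'}(x',y')-a$. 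Hence $d(\eta_{x'}(a),y')\ge\max\{a,\,d_{X'}(x',y')-a\}\ge\tfrac12 d_{X'}(x',y')$, and together with $d(\eta_{x'}(a),y')\ge a$ this gives $d(\eta_{x'}(a),y')\ge\tfrac14\bigl(a+d_{X'}(x',y')\bigr)$. Comparing $a+d_{X'}(x',y')$ with $\tilde d$ up to the universal factor $\sqrt2$, the map $\Phi$ is bi-Lipschitz (constant $4$ suffices), which is the claim.

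There is no genuinely hard step here; the only place to be careful — more bookkeeping than difficulty — is keeping all parametrisations consistent: that the parallel line through an arbitrary point hits the level set $X'$ exactly once, and that the $b_{\gamma^{+}}$-parametrisation of each $\eta_x$ is the unit-speed one used when invoking Lemma~\ref{lem:moving-isometry}. Both follow from Lemma~\ref{lem:bi-asym-parallel} (affineness of $b_{\gamma^{+}}$ with slope $1$ along $\eta_x$) together with the uniqueness lemma, so no new ideas are needed. In particular one does not need to decide whether the Busemann functions associated to lines are affine on all of $X$, nor whether $X'$ is geodesic.
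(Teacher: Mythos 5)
Your proposal is correct and follows essentially the same route as the paper's proof: both parametrise $X$ by the unique parallel lines via the moving-isometry lemma, use the triangle inequality for the upper Lipschitz bound and the $1$-Lipschitzness of $b_{\gamma^+}$ for the lower, differing only in the choice of product norm ($\ell^2$ versus the paper's $\ell^1$, for which the paper obtains the slightly sharper constant $3$) and in which side of the bijection is taken as primary. The paper itself remarks at the end of its proof that the particular product metric is immaterial since all norms on $\mathbb{R}^2$ are bi-Lipschitz equivalent, so your variant is fully consistent with its statement.
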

\begin{rem*}
The proof shows that $X'$ is a subset of $X$ but it is not clear
whether it can be chosen to be convex/totally geodesic w.r.t. $d$
and $\tilde{d}$. In case $X'$ is totally geodesic w.r.t. $d$ the
product space can be chosen to be Busemann concave.
\end{rem*}
\begin{proof}
We claim that $(X,d)$ is bi-Lipschitz to $(X'\times\mathbb{R},\tilde{d})$
where $X'=b_{\gamma^{+}}^{-1}(0)$ and 
\[
\tilde{d}((x,t),(y,s))=d(x,y)+|t-s|.
\]
 This is obviously a product metric on $X'\times\mathbb{R}$. Now
let $\Phi:X\to(\mathbb{R}\to X)$ assign to each $x\in X$ the unique
line $\eta_{x}$ parallel to $\gamma$ such that $b_{\gamma^{+}}(\eta_{x}(0))=0$.
We claim that the map 
\[
\Psi(x)=(\eta_{x}(0),b_{\gamma^{+}}(x))
\]
 is a bi-Lipschitz homeomorphism between $(X,d)$ and $(X'\times\mathbb{R},\tilde{d})$.
Note that $\Psi$ is obviously bijective, so that for simplicity of
notation we identify $X$ and $(X'\times\mathbb{R})$ set-wise and
assume $(x,t),(y,s)$ are points living in $X$. 

Using the triangle inequality of $d$ we get 
\[
d((x,t),(y,s))\le d((x,t),(x,s))+d((x,s),(y,s))=|t-s|+d(x,y)=\tilde{d}((x,t),(y,s)).
\]
 which implies that $\Psi^{-1}$ is $1$-Lipschitz.

We claim that $d(x,y)+|t-s|\le3d((x,t),(y,s)).$ This would imply
that $\Psi$ is $3$-Lipschitz and finish the proof.

To show the claim note that $b_{\gamma^{+}}$ is $1$-Lipschitz so
that 
\[
|t-s|=|b_{\gamma^{+}}(x,t)-b_{\gamma^{+}}(y,s)|\le d((x,t),(y,s))
\]
and from triangle inequality
\begin{eqnarray*}
d(x,y) & \le & d((x,t),(y,s))+d((y,s),(y,t)\\
 & = & d((x,t),(y,s))+|t-s|.
\end{eqnarray*}
Combining we obtain 
\[
d(x,y)+|t-s|\le d((x,t),(y,s))+2|t-s|\le3d((x,t),(y,s)).
\]

Note that it is possible to change $\tilde{d}$ by any metric product
of $(X',d)$ and $(\mathbb{R},|\cdot-\cdot|)$ as any two norms on
$\mathbb{R}^{2}$ are bi-Lipschitz with Lipschitz constants only depending
on the two norms. In particular, if $X'$ was convex w.r.t. $d$ then
one may choose the $L^{2}$-product so that $X'\times_{2}\mathbb{R}$
is Busemann concave.
\end{proof}
If the Hausdorff measure is non-trivial and the space is ``Riemannian-like'',
then it is possible to show that $X'$ is indeed convex and $(X,d)$
is isometric to the $L^{2}$-product of $X'$ and the real line.
\begin{cor}
Assume, in addition, that $\mathcal{H}^{n}$ is non-trivial and $(X,d,\mathcal{H}^{n})$
is infinitesimally Hilbertian then $(X,d)$ is isometric to $X'\times_{2}\mathbb{R}$.
\end{cor}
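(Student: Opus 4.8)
The plan is to combine the bi-Lipschitz splitting $\Psi\colon(X,d)\to(X'\times\mathbb{R},\tilde{d})$ from Theorem~\ref{thm:Splitting} with the first-order differential calculus on $(X,d,\mathcal{H}^{n})$, using infinitesimal Hilbertianity to rotate the bi-Lipschitz product into an honest $L^{2}$-product. Write $\mathfrak{m}=\mathcal{H}^{n}$ and $b=b_{\gamma^{+}}$, set $X'=b^{-1}(0)$, and let $\Phi_{a}\colon X\to X$, $\Phi_{a}(\eta_{x}(t))=\eta_{x}(t+a)$, be the flow along the parallel lines; by Lemma~\ref{lem:moving-isometry} this is a one-parameter group of isometries of $(X,d)$ with unit-speed orbits and $b\circ\Phi_{a}=b+a$. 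Since a Busemann concave space with non-trivial Hausdorff measure is doubling and satisfies a Poincar\'e inequality, $(X,d,\mathfrak{m})$ is a PI space, so the Cheeger/Gigli calculus is available: Sobolev functions have minimal weak upper gradients, absolutely continuous curves are a.e.\ metrically differentiable with speed equal to the norm of their velocity, and infinitesimal Hilbertianity yields a pointwise $\mathfrak{m}$-a.e.\ bilinear pairing $\langle\nabla f,\nabla g\rangle$ with $\langle\nabla f,\nabla f\rangle=|\nabla f|^{2}$, making $L^{2}(TX)$ a Hilbert module.

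First I would show $|\nabla b|=1$ $\mathfrak{m}$-a.e. The bound $|\nabla b|\le\operatorname{lip}b\le1$ is clear since $b$ is $1$-Lipschitz. For the reverse, $\Phi$ preserves $\mathfrak{m}$ (isometries preserve Hausdorff measure), so the disintegration of $\mathfrak{m}$ over the orbit space $X'$ is $\mathfrak{m}\cong\mathfrak{m}'\otimes\mathcal{L}^{1}$; hence a positive-$\mathfrak{m}'$-measure set of orbits satisfies the weak-upper-gradient inequality, and along such an orbit $c$ one has $L=b(c(L))-b(c(0))\le\int_{0}^{L}|\nabla b|(c(a))\,da$, forcing $|\nabla b|=1$ a.e.\ there and so $\mathfrak{m}$-a.e. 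Consequently the orbits of $\Phi$ are, up to reparametrization, gradient flow curves of $b$, so $\tfrac{d}{da}f(\Phi_{a}x)=\langle\nabla f,\nabla b\rangle(\Phi_{a}x)$ for Lipschitz $f$ and a.e.\ $a$. Applied to any $\Phi$-invariant Lipschitz $f$ (equivalently, a Lipschitz function pulled back from $X'$) this gives $\langle\nabla f,\nabla b\rangle=0$ $\mathfrak{m}$-a.e. Because $b$ together with the pullbacks of Lipschitz functions on $X'$ generate the cotangent module --- this is exactly where the bi-Lipschitz identification $X\cong X'\times\mathbb{R}$ of Theorem~\ref{thm:Splitting} enters --- one obtains an orthogonal splitting $L^{2}(TX)=\langle\nabla b\rangle\oplus\mathcal{H}$ into the vertical line $\mathbb{R}\nabla b$ and a horizontal submodule $\mathcal{H}$, with $\Phi$ acting isometrically and preserving the splitting.

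With this infinitesimal orthogonal decomposition and the isometric, measure-preserving flow, the metric identity follows by a computation in the spirit of the metric-measure splitting theorem. Write $x=\Phi_{b(x)}(x')$, $y=\Phi_{b(y)}(y')$ with $x',y'\in X'$ and $h=b(y)-b(x)$, and let $\bar{d}$ be the induced length metric on $X'$ (finite and genuinely geodesic, since projecting a geodesic of $X$ to $X'$ by $z\mapsto\Phi_{-b(z)}(z)$ produces a rectifiable curve there). For the upper bound, tilt a near-geodesic $\sigma$ of $(X',\bar{d})$ from $x'$ to $y'$ to $c(r)=\Phi_{(1-r)b(x)+rb(y)}(\sigma(r))$; since $\Phi$ acts isometrically preserving the splitting and $\dot\sigma\perp\nabla b$ (as $b\circ\sigma\equiv0$), Pythagoras in the Hilbert module gives $|\dot c(r)|^{2}=|\dot\sigma(r)|^{2}+h^{2}$, so $\operatorname{length}(c)=\sqrt{\operatorname{length}(\sigma)^{2}+h^{2}}$ for constant-speed $\sigma$, and hence $d(x,y)\le\sqrt{\bar{d}(x',y')^{2}+h^{2}}$. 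For the lower bound, given any absolutely continuous curve $c$ from $x$ to $y$, split its velocity into vertical and horizontal parts with speeds $s_{\mathrm{v}},s_{\mathrm{h}}$; then $\int s_{\mathrm{v}}\ge|h|$ by the fundamental theorem of calculus, the projected curve $r\mapsto\Phi_{-b(c(r))}(c(r))$ lies in $X'$, joins $x'$ to $y'$ and has speed $s_{\mathrm{h}}$, so $\int s_{\mathrm{h}}\ge\bar{d}(x',y')$, and Minkowski's integral inequality gives $\operatorname{length}(c)=\int\sqrt{s_{\mathrm{v}}^{2}+s_{\mathrm{h}}^{2}}\ge\sqrt{(\int s_{\mathrm{v}})^{2}+(\int s_{\mathrm{h}})^{2}}\ge\sqrt{h^{2}+\bar{d}(x',y')^{2}}$. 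Combining the two bounds, $\Psi$ is an isometry onto $(X'\times\mathbb{R},\sqrt{\bar{d}^{2}+|\cdot|^{2}})=X'\times_{2}\mathbb{R}$; in particular $\bar{d}=d|_{X'}$ and $X'$ is convex in $X$ (geodesics of $X$ between points of $X'$ correspond to constant-$\mathbb{R}$-coordinate geodesics of the $L^{2}$-product), so $X'$ is a geodesic, Busemann concave space.

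The main obstacle is the passage from the $\mathfrak{m}$-a.e.\ orthogonal module splitting to the genuinely metric Pythagorean identity: one must check that the metric speed of an absolutely continuous curve equals the pointwise norm of its Gigli velocity (valid on PI spaces), that $\langle\nabla f,\nabla b\rangle=0$ is strong enough to yield $|\dot c|^{2}=s_{\mathrm{v}}^{2}+s_{\mathrm{h}}^{2}$ along \emph{individual} curves and that the projection to $X'$ does not increase horizontal length, and that the relevant families of orbits and curves are non-negligible for the $2$-modulus --- i.e.\ the Fubini-type bookkeeping coupling the flow $\Phi$, the disintegration $\mathfrak{m}\cong\mathfrak{m}'\otimes\mathcal{L}^{1}$, and the weak-upper-gradient calculus. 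That $b_{\gamma^{+}}$ is affine with convex level sets then drops out as a by-product rather than being an input.
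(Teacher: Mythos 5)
Your proposal is essentially a spelled-out version of the route the paper itself takes: run Gigli's RCD splitting argument, with the crucial shortcut that the ``gradient flow'' of $b_{\gamma^{+}}$ is already given concretely by the measure-preserving isometric flow $\Phi_{a}$ along parallel lines coming from Lemma \ref{lem:moving-isometry}, so that the parts of Gigli's proof requiring Ricci bounds (Laplacian comparison, regularity of the flow) are bypassed and only the infinitesimally Hilbertian calculus on the resulting PI space is needed to upgrade the bi-Lipschitz splitting of Theorem \ref{thm:Splitting} to a metric $L^{2}$-product. The paper condenses this to a citation of Gigli's proof plus the remark that the flow is the moving-along-lines isometry and uses \cite[Theorem 5.23]{Gigli2013a} for total geodesy of $X'$; you instead derive convexity of $X'$ as a by-product of the Pythagorean length identity, and you correctly flag that the passage from the a.e.\ orthogonal module splitting to the metric Pythagoras identity is the technically delicate point — which is exactly the content of the Gigli machinery being invoked. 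Same approach, with the student version more explicit about where infinitesimal Hilbertianity and the PI structure enter.
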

\begin{proof}
This follows from the proof of Gigli's splitting theorem for $RCD$-spaces
\cite{Gigli2013a}. Just note that the ``gradient flow'' of the
Busemann function is just the isometry induced by moving along the
lines and this isometry also preserves the Hausdorff measure. Using
\cite[Theorem 5.23]{Gigli2013a} one shows that $X'$ is totally geodesic.
Furthermore, $L^{2}$-products of infinitesimally Hilbertian Busemann
concave spaces are also infinitesimally Hilbertian (compare with \cite[Theorem 6.1]{Gigli2013a}). 
\end{proof}
As mentioned above if $(X,d)$ is Busemann concave and angles are
well-defined then $(X,d)$ is an Alexandrov space. In Gigli's proof
of the splitting theorem, pointwise angles are replaced by ``smoothed''
angles. More precisely, instead of looking at two intersecting geodesic
in $X$ one can look at intersecting geodesics in the Wasserstein
space $\mathcal{P}_{2}(X)$. If the geodesics are pointwise absolutely
continuous measures then being infinitesimally Hilbertian shows that
there is a notion of angle. Thus one might ask.
\begin{problem}
Assume $(X,d,\mathcal{H}^{n})$ is an infinitesimally Hilbertian Busemann
concave metric measure space and $\mathcal{H}^{n}$ is non-trivial.
Is $(X,d)$ a non-negatively curved Alexandrov space? 
\end{problem}

\subsection{Tangent cones\label{subsec:Tangent-cones}}

Let $\Gamma_{x}$ be the set of maximal unit speed geodesics starting
at $x$. The \emph{pre-tangent cone} $\hat{T}_{x}X$ at $X$ is defined
as the set $\Gamma_{x}\times[0,\infty)$ such that the points $(\gamma,0)$
are identified. On $\hat{T}_{x}M$ we define a metric $d_{x}$ as
follows: Given geodesics $\gamma,\eta\in\Gamma_{x}$ there is an interval
$I=[0,a]$ such that $\gamma,\eta$ are both defined on $I$. Then
define a metric $d_{x}$ on $\hat{T}_{x}X$ by 
\[
d_{x}((\gamma,s),(\eta,t))=\sup_{r\in[0,1],\max\{rs,rt\}\le a}\frac{d(\gamma(rs),\eta(rt))}{r}.
\]

\begin{lem}
If $(X,d)$ is Busemann concave then $d_{x}$ is a well-defined metric
on $\hat{T}_{x}X$. Furthermore, it holds $d_{x}((\gamma,\lambda s),(\eta,\lambda t))=\lambda d_{x}((\gamma,s),(\eta,t))$.
\end{lem}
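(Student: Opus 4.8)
The plan is to verify the three defining properties of a metric---finiteness, symmetry, the triangle inequality---together with the homogeneity under scaling, using Busemann concavity at the single point $x$ as the essential tool. The only nontrivial point is finiteness of the supremum: for fixed $\gamma,\eta$ and parameters $s,t$, the map $r\mapsto d(\gamma(rs),\eta(rt))/r$ must be bounded on the admissible set of $r$. Here I would apply Busemann concavity directly to the hinge formed by $\gamma$ and $\eta$ at $x$: concavity of $r\mapsto d(\gamma(r\sigma),\eta(r\tau))$ on $[0,1]$ (after rescaling so both geodesics are defined up to parameter $a$) together with the value $0$ at $r=0$ shows that $r\mapsto d(\gamma(r\sigma),\eta(r\tau))/r$ is \emph{non-increasing} in $r$. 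Consequently the supremum in the definition of $d_x$ is attained in the limit $r\to 0^+$, the defining expression is in fact a limit rather than a genuine supremum, and finiteness follows because $d(\gamma(rs),\eta(rt))\le d(\gamma(rs),x)+d(x,\eta(rt))=r(s+t)$, so the ratio is bounded by $s+t$.

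Next I would record symmetry, which is immediate from symmetry of $d$ and the symmetric role of $(\gamma,s)$ and $(\eta,t)$ in the formula, and I would check that $d_x((\gamma,s),(\eta,t))=0$ forces $(\gamma,s)=(\eta,t)$ in $\hat T_xX$: if $s=t=0$ the two points are the cone point and already identified; otherwise, letting $r\to 0$, vanishing of the ratio forces $\gamma$ and $\eta$ to agree to first order, and since they are unit-speed geodesics from $x$ that are initially tangent, non-branching (which Busemann concavity implies) gives $\gamma=\eta$ on their common domain, whence $s=t$ as well because the ratio at small $r$ behaves like $|s-t|$ when $\gamma=\eta$.

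For the triangle inequality, take three elements $(\gamma,s),(\eta,t),(\zeta,u)$ and fix $r$ small enough that all three geodesics are defined at the relevant parameters. Then $d(\gamma(rs),\zeta(ru))\le d(\gamma(rs),\eta(rt))+d(\eta(rt),\zeta(ru))$; dividing by $r$ and passing to the supremum over $r$ (using that each term individually is controlled, in the limit, by the corresponding $d_x$-distance thanks to the monotonicity established above) yields $d_x((\gamma,s),(\zeta,u))\le d_x((\gamma,s),(\eta,t))+d_x((\eta,t),(\zeta,u))$. The homogeneity statement $d_x((\gamma,\lambda s),(\eta,\lambda t))=\lambda d_x((\gamma,s),(\eta,t))$ is a direct reparametrisation: substituting $r'=\lambda r$ (and shrinking the admissible range of $r$ accordingly, which is harmless since the ratio is monotone and the supremum is a limit at $0$) transforms one defining expression into $\lambda$ times the other.

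The main obstacle is the interplay between the $\sup$ in the definition and the fact that different pairs of geodesics have different common domains of definition: one must be careful that the admissibility constraint $\max\{rs,rt\}\le a$ does not interfere with taking limits $r\to 0$, and that the monotonicity argument survives the rescaling needed to put two geodesics on a common interval $[0,1]$. Once it is observed that the relevant quantity is really $\lim_{r\to 0^+} d(\gamma(rs),\eta(rt))/r$ and that this limit equals the supremum by concavity, all the constraints become vacuous in the limit and the remaining verifications are routine. I expect essentially no difficulty beyond this bookkeeping.
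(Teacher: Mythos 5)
Your proposal is correct and follows essentially the same route as the paper: Busemann concavity gives that $r\mapsto d(\gamma(rs),\eta(rt))/r$ is non-increasing, so the supremum is a limit at $r\to 0^+$, from which finiteness, the triangle inequality, definiteness (via $rs=d(x,\gamma(rs))=d(x,\eta(rt))=rt$ once the ratio is identically zero), and homogeneity all follow. The only stylistic difference is that the paper deduces $s=t$ directly from $\gamma(rs)=\eta(rt)$ for all small $r$ before concluding $\gamma\equiv\eta$, whereas you argue in the other order; both orderings work.
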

\begin{proof}
$d_{x}$ is obviously non-negative and symmetric. The function $r\mapsto\frac{d(\gamma(sr),\eta(tr))}{r}$
is non-increasing by Busemann concavity so that the supremum in the
definition of $d_{x}$ is actually a limit w.r.t. $r\to0$. This also
implies that the triangle inequality holds. Also note that $d_{x}((\gamma,s),(\eta,t))=0$
implies that $d(\gamma(rs),\eta(rt))=0$ so that $\gamma(rs)=\eta(rt)$.
As $\gamma$ and $\eta$ are unit speed geodesics starting at $x$
we must have $s=t$ so that either $s=t=0$ or $\gamma\equiv\eta$,
i.e. $d_{x}$ is definite.
\end{proof}
It is possible to define an \emph{exponential map} directly from the
pre-tangent cone: Let $U_{x}\subset\hat{T}_{x}X$ such that $(\gamma,t)\in U_{x}$
iff $\gamma(t)$ is defined. Then we define the exponential map $\exp_{x}:U\to X$
by 
\[
\exp_{x}(\gamma,t)=\gamma(t).
\]
Note that by definition, $\exp_{x}$ is onto. By Busemann concavity
and the definition of $d_{x}$ it is not difficult to show that $\exp_{x}$
is $1$-Lipschitz, i.e. $d_{x}(v,w)\ge d(\exp_{x}(v),\exp_{x}(w))$. 
\begin{defn}
[Tangent cone] The \emph{tangent cone} $(T_{x}X,d_{x})$ at $x$
is defined as the metric completion of $(\hat{T}_{x}X,d_{x})$. 
\end{defn}
The tangent cone $(T_{x}X,d_{x})$ is not necessarily the (pointed)
Gromov-Hausdorff limit of the blow ups $(X,\frac{1}{\lambda}d,x)$
at $x$. Indeed, if $(X,d)$ is compact and the tangent cone at some
point is not locally compact then it cannot be the Gromov-Hausdorff
limit of blow ups. An example is given by 
\[
K_{c}^{p}=\left\{ (x_{i})\in\ell^{p}\,|\,\sum c^{i}x_{i}^{p}\le1\right\} 
\]
where $c>1$ and $p\in(1,\infty)$. The set $K_{c}^{p}$ is a compact
convex subset of $\ell^{p}$, but the tangent cones at points $(x_{i})$
with $0<\sum c^{i}x_{i}^{p}<1$ are isometry to $\ell^{p}$. However,
if the blow-ups are precompact then their limit is uniquely given
by the tangent cone.
\begin{lem}
If $(X,\frac{1}{\lambda_{n}}d,x)$ converges in the Gromov-Hausdorff
topology then the limit equals $(T_{x}X,d_{x})$. In particular, if
$\{(X,\frac{1}{\lambda}d,x)\}_{\lambda\in(0,1]}$ is precompact then
the limit as $\lambda\to0$ exists and $(T_{x}X,d_{x})$ is its unique
GH-limit. In particular, the tangent cone is the (blow up) tangent
space at $x$.
\end{lem}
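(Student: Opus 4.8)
The plan is to route everything through the family of rescaled exponential maps. For $\lambda\in(0,1]$ I would define $\Phi_\lambda\colon\hat T_xX\to X$ by $\Phi_\lambda(\gamma,s)=\gamma(\lambda s)$ — well defined because $\lambda s\le s$ lies in the domain of $\gamma$ — and regard it as a map into the rescaled space $(X,\tfrac1\lambda d)$. Three properties come straight from the preceding lemma on $d_x$: using the homogeneity $d_x((\gamma,\lambda s),(\eta,\lambda t))=\lambda\,d_x((\gamma,s),(\eta,t))$ together with the $1$-Lipschitz bound for $\exp_x$, the map $\Phi_\lambda$ is $1$-Lipschitz from $(\hat T_xX,d_x)$ to $(X,\tfrac1\lambda d)$; since $r\mapsto d(\gamma(rs),\eta(rt))/r$ is non-increasing, $\tfrac1\lambda d(\Phi_\lambda v,\Phi_\lambda w)$ increases to $d_x(v,w)$ as $\lambda\downarrow0$ for each fixed $v,w$; and $\Phi_\lambda$ maps the ball $\hat B_r=\{(\gamma,s)\in\hat T_xX:s\le r\}$ \emph{onto} the closed $d$-ball of radius $\lambda r$ about $x$, because any geodesic from $x$ to a point of that ball extends to a maximal geodesic in $\Gamma_x$.

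With this in hand I would prove, first, that if the closed balls of $T_xX$ are compact, i.e. $T_xX$ is proper, then $(X,\tfrac1\lambda d,x)$ converges to $(T_xX,d_x,o)$ in the pointed Gromov--Hausdorff topology as $\lambda\to0$, where $o$ is the cone point. Fixing $r$, on the compact set $\overline{\hat B_r}\subset T_xX$ the function $(v,w)\mapsto d_x(v,w)-\tfrac1\lambda d(\Phi_\lambda v,\Phi_\lambda w)$ is continuous, non-negative, and decreases pointwise to $0$ as $\lambda\downarrow0$, so by Dini's theorem the decrease is uniform; hence for small $\lambda$ the restriction of $\Phi_\lambda$ to $\hat B_r$ is a surjective $\varepsilon(\lambda,r)$-isometry onto $\bar B_{\lambda r}^d(x)$ with $\varepsilon(\lambda,r)\to0$, which is exactly the estimate needed for pointed GH convergence of the balls.

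Second — and this is the crucial direction — I would show that if $\lambda_n\to0$ and $(X,\tfrac1{\lambda_n}d,x)$ converges in the pointed GH topology, then the balls of $T_xX$ are precompact. Since $X$ is proper each $\bar B_{\lambda_n r}^d(x)$ is compact, so a convergent sequence of such balls is uniformly totally bounded: for each $\varepsilon$ there is $N_{\varepsilon,r}$ bounding the size of any $\varepsilon$-separated subset of any of them. If $v_1,\dots,v_m\in\hat B_r$ were $\varepsilon$-separated in $d_x$, then by the pointwise convergence above $\Phi_{\lambda_n}(v_1),\dots,\Phi_{\lambda_n}(v_m)$ are $(\varepsilon/2)$-separated in $\tfrac1{\lambda_n}d$ for $n$ large, forcing $m\le N_{\varepsilon/2,r}$; hence each $\hat B_r$, and therefore each closed ball of $T_xX$, is precompact. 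Combining the two steps: if $(X,\tfrac1{\lambda_n}d,x)\to Y$ then $T_xX$ is proper, so $(X,\tfrac1\lambda d,x)\to(T_xX,d_x,o)$ by the first step, and by uniqueness of pointed Gromov--Hausdorff limits among spaces with compact balls $Y$ is isometric to $(T_xX,d_x,o)$. The ``in particular'' assertions then follow formally: precompactness of $\{(X,\tfrac1\lambda d,x)\}_{\lambda\in(0,1]}$ makes every sequence $\lambda_n\to0$ subconverge, necessarily to $(T_xX,d_x,o)$, so the blow-up limit exists and equals the tangent cone.

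I expect the main obstacle to be the second step: one has only a \emph{pointwise}, not uniform, comparison between $\tfrac1\lambda d(\Phi_\lambda\,\cdot\,,\Phi_\lambda\,\cdot\,)$ and $d_x$, and it is precisely the hypothesis that some blow-up sequence converges that forces the tangent-cone balls to be totally bounded and thereby upgrades the pointwise control to the uniform control a genuine Gromov--Hausdorff statement requires. The example $K_c^p$, whose tangent cones are copies of $\ell^p$, shows that this upgrade genuinely can fail and that no blow-up sequence converges there. The remaining work — bookkeeping with the almost-isometries, open versus closed balls, and the precise definition of pointed GH convergence — is routine.
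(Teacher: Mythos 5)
The paper states this lemma without proof, so there is no written argument to compare against; the scaled exponential map you build is the same $\exp_x^\lambda$ that the proof of the subsequent lemma employs, so your route is the intended one, and the two-step structure (propriety of $T_xX$ gives GH convergence of the blow-ups; GH convergence of a blow-up sequence forces propriety of $T_xX$) is the correct way to obtain both directions.

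There is, however, a misreading of the definition that damages your first step. You claim $\Phi_\lambda(\gamma,s)=\gamma(\lambda s)$ is well defined because ``$\lambda s\le s$ lies in the domain of $\gamma$,'' but the paper's $\hat{T}_{x}X$ is $\Gamma_x\times[0,\infty)$ with no constraint tying $s$ to the domain of $\gamma$: a maximal unit-speed geodesic $\gamma\in\Gamma_x$ may be defined only on $[0,a(\gamma)]$ with $a(\gamma)$ finite, and pairs $(\gamma,s)$ with $s>a(\gamma)$ are genuine points of $\hat{T}_{x}X$. This is precisely what makes $T_xX$ a cone even when $X$ is bounded---for instance $T_0[0,1]=[0,\infty)$, not $[0,1]$---and it is what the paper's $K_c^p$ example exploits. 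Consequently $\Phi_\lambda$ is defined only on the $\lambda$-dependent set $D_\lambda=\{(\gamma,s):\lambda s\le a(\gamma)\}$, and your Dini argument cannot be run on $\overline{\hat B_r}$ as stated, since the functions you invoke are not defined there. The repair is routine: $D_\lambda\cap\hat B_r$ increases to $\hat B_r$ as $\lambda\downarrow 0$, and $\Phi_\lambda$ restricted to $D_\lambda\cap\hat B_r$ is still $1$-Lipschitz and onto the $r$-ball of $(X,\tfrac1\lambda d)$, because the natural preimage $(\gamma,d(x,y)/\lambda)$ of a point $y$ always satisfies $\lambda s=d(x,y)\le a(\gamma)$. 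Replace Dini by a finite $\epsilon$-net $\{v_i\}\subset\hat B_r$ of the compact set $\overline{\hat B_r}$: for $\lambda$ small every $v_i$ lies in $D_\lambda$, monotone pointwise convergence on the finite set gives $\bigl|\tfrac1\lambda d(\Phi_\lambda v_i,\Phi_\lambda v_j)-d_x(v_i,v_j)\bigr|<\epsilon$, and $\{\Phi_\lambda v_i\}$ is then automatically an $\epsilon$-net of the $r$-ball, which yields the needed Gromov--Hausdorff estimate. Your second step and the final reduction via uniqueness of pointed GH limits among proper spaces are correct as written.
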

More generally, if the tangent cone $(T_{x}M,d_{x})$ was locally
compact we obtain the following.
\begin{lem}
If $(T_{x}X,d_{x})$ is locally compact then for each $r>0$ the sequence
$\{(B_{r}(x),\frac{1}{\lambda}d)\}_{\lambda\in(0,1]}$ is precompact
with respect to the Gromov-Hausdorff topology. In particular, $(T_{x}X,d_{x})$
is the (unique) pointed Gromov-Hausdorff limit of $\{(X,\frac{1}{\lambda}d,x)\}_{\lambda\in(0,1]}$
as $\lambda\to0$.
\end{lem}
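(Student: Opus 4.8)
The plan is to reduce the statement to a single uniform total-boundedness estimate for the rescaled balls, apply Gromov's precompactness theorem, and then feed the resulting precompactness into the preceding lemma to identify the limit. Here $(B_r(x),\tfrac1\lambda d)$ is read as the set $\{y\in X:d(x,y)\le\lambda r\}$ equipped with the metric $\tfrac1\lambda d$; since $X$ is proper this is a compact space of diameter at most $2r$, so by Gromov's theorem it suffices to produce, for each $\epsilon>0$ and $r>0$, a number $N(\epsilon,r)\in\mathbb N$ \emph{independent of} $\lambda\in(0,1]$ such that $(B_r(x),\tfrac1\lambda d)$ carries an $\epsilon$-net of cardinality $\le N(\epsilon,r)$; equivalently, that $B_{\lambda r}(x)$ can be covered by $N(\epsilon,r)$ balls of $d$-radius $\lambda\epsilon$.

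First I would transfer the question to the tangent cone through the exponential map. Write $o$ for the cone point (the class of the $(\gamma,0)$) and $\bar B^{T}_{\rho}(o)$ for the closed $d_x$-ball of radius $\rho$ about $o$ in $T_xX$. The immediate computation $d_x((\gamma,s),(\gamma,t))=|s-t|$ gives $d_x(o,(\gamma,t))=t$, so $\bar B^{T}_{\rho}(o)\cap\hat T_xX=\{(\gamma,t):t\le\rho\}$. Every $y\in B_{\lambda r}(x)$ lies on a unit-speed geodesic from $x$ which extends to some maximal $\gamma\in\Gamma_x$, hence $y=\exp_x(\gamma,d(x,y))$ with $(\gamma,d(x,y))\in\bar B^{T}_{\lambda r}(o)\cap U_x$; since $\exp_x$ is $1$-Lipschitz and fixes $o$, it follows that $\exp_x$ maps $\bar B^{T}_{\lambda r}(o)\cap U_x$ onto $B_{\lambda r}(x)$. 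Moreover the dilation $\delta_\mu(\gamma,t)=(\gamma,\mu t)$ multiplies $d_x$ by $\mu$ and fixes $o$, so it extends to a bi-Lipschitz homeomorphism of the completion $T_xX$ carrying $\bar B^{T}_{r}(o)$ onto $\bar B^{T}_{\mu r}(o)$. Combining these facts, an $\epsilon$-net of $\bar B^{T}_{r}(o)$ of size $N$ is carried by $\delta_\lambda$ to a $\lambda\epsilon$-net of $\bar B^{T}_{\lambda r}(o)$ and then by $\exp_x$ to a cover of $B_{\lambda r}(x)$ by at most $N$ balls of $d$-radius $\lambda\epsilon$ (up to a harmless factor in the radius coming from replacing net points by nearby points of $U_x$). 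So everything comes down to bounding the $\epsilon$-covering numbers of the \emph{single} set $\bar B^{T}_{r}(o)$.

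The role of local compactness is precisely to make that set compact. A compact neighbourhood of $o$ in $T_xX$ contains $\bar B^{T}_{\rho_0}(o)$ for some $\rho_0>0$, and a closed ball sitting inside a compact set is compact; applying $\delta_{r/\rho_0}$ then shows $\bar B^{T}_{r}(o)$ is compact, hence totally bounded, for every $r>0$. Taking $N(\epsilon,r)$ to be its $\epsilon$-covering number supplies exactly the $\lambda$-independent bound demanded above, so Gromov precompactness yields the first assertion.

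For the ``in particular'' I would argue by sequences: given $\lambda_n\downarrow 0$, use the precompactness just established for each $r\in\mathbb N$ together with a diagonal extraction to obtain a subsequence along which $(B_r(x),\tfrac1{\lambda_n}d)$ converges for all $r\in\mathbb N$ simultaneously, and arrange the approximating maps to preserve the basepoint $x$; this is pointed Gromov-Hausdorff convergence of $(X,\tfrac1{\lambda_n}d,x)$. Hence $\{(X,\tfrac1\lambda d,x)\}_{\lambda\in(0,1]}$ is precompact in the pointed topology, and the preceding lemma then identifies the limit as $\lambda\to 0$ with $(T_xX,d_x)$. The only point needing care---not really an obstacle---is the bookkeeping linking the domain $U_x$ of $\exp_x$ with the closed ball $\bar B^{T}_{\lambda r}(o)$ of the completion, together with the verification that $N(\epsilon,r)$ genuinely does not depend on $\lambda$; both are handled by the dilation argument above.
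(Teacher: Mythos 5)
Your proof is correct and follows essentially the same line as the paper's: both exploit local compactness of the tangent cone to obtain a $\lambda$-independent $\epsilon$-covering bound for $U_r=\bar B^{T}_r(o)\cap U_x$, push that estimate forward through a $1$-Lipschitz scaled exponential map onto the rescaled balls, and invoke Gromov precompactness, after which the preceding lemma identifies the limit. The only cosmetic difference is that you factor the paper's map $\exp_x^\lambda(\gamma,t)=\gamma(\lambda t)$ as $\exp_x\circ\delta_\lambda$, stating the dilation and the unscaled exponential separately rather than working with the scaled exponential map directly.
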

\begin{proof}
By scale invariance of $T_{x}X$ any bounded subset is precompact.
Let $U_{r}$ be all $(\gamma,t)\in U$ with $t\le r$. One can define
a scaled exponential map $\exp_{x}^{\lambda}(\gamma,t)=\gamma(\lambda t)$.
Then $\exp_{x}^{\lambda}$ maps $(U_{r},d_{x})$ onto $(B_{r}^{\lambda}(x),\frac{1}{\lambda}d)$
and is $1$-Lipschitz where $B_{r}^{\lambda}(x)$ is the $(\frac{1}{\lambda}d)$-ball
of radius $r$ with center $x$. By assumption $U_{r}$ is bounded
and thus precompact in $(T_{x}X,d_{x})$. Therefore, for each $\epsilon>0$
there is an $N(\epsilon)<\infty$ such that $U_{r}$ can be covered
by $N(\epsilon)$ $d_{x}$-balls of radius $\epsilon$ with center
in $U_{r}$. 

We claim that that $B_{r}^{\lambda}$ can be covered by at most $N(\epsilon)$
$(\frac{1}{\lambda}d)$-balls of radius $\epsilon$. Indeed, let $\{v_{1},\ldots,v_{N(\epsilon)}\}$
be the centers of the $d_{x}$-balls of radius $\epsilon$. This means
for all $v\in U_{r}$ it holds $\inf_{i=1}^{N(\epsilon)}d_{x}(v,v_{i})\le\epsilon$.
Set $x_{i}=\exp_{x}^{\lambda}v_{i}$. As $\exp_{x}^{\lambda}$ is
onto for each $x'\in B_{r}^{\lambda}(x)$ there is a $v\in(\exp_{x}^{\lambda})^{-1}(x')$.
Combining this with the $1$-Lipschitz property we obtain 
\begin{eqnarray*}
\inf_{i=1,\ldots,N(\epsilon)}d(x',x_{i}) & = & \inf_{i=1,\ldots,N(\epsilon)}d(\exp_{x}^{\lambda}(v),\exp_{x}^{\lambda}(v_{i}))\\
 & \le & \inf_{i=1,\ldots,N(\epsilon)}\lambda d_{x}(v,v_{i})\le\lambda\epsilon.
\end{eqnarray*}
Hence, $\{B_{\epsilon}^{\lambda}(x_{i})\}_{i=1}^{N(\epsilon)}$ covers
$B_{r}^{\lambda}(x)$. By definition $\diam U_{r},\diam B_{r}^{\lambda}\le2r$
so that Gromov's precompactness theorem implies that $\{(B_{r}^{\lambda}(x),\frac{1}{\lambda}d)\}$
is precompact. Together with the previous lemma we see that the limit
has to be $(\closure U_{r},d_{x})=(B_{r}^{T_{x}M},d_{x})$. 
\end{proof}
\begin{cor}
If $(X,d)$ is a complete Busemann concave (local) doubling metric
space then $(T_{x}M,d_{x})$ is locally compact and the unique limit
of the blowups $(X,\frac{1}{\lambda}d,x)_{\lambda\in(0,\epsilon]}$. 
\end{cor}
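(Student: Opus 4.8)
The plan is to feed local doubling into Gromov's precompactness theorem and then invoke the two lemmas immediately preceding the corollary, which are tailored for exactly this situation.

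First I would fix $r>0$ and examine the rescaled balls $B^{\lambda}_{r}(x)$, i.e.\ the $d$-balls $B_{\lambda r}(x)$, as $\lambda\to 0$. Since $(X,d)$ is locally doubling, for all sufficiently small $\lambda$ the ball $B_{\lambda r}(x)$ lies in the region where the doubling inequality is available; iterating that inequality from scale $\lambda r$ down to scale $\lambda\epsilon$ shows that $(B^{\lambda}_{r}(x),\frac1\lambda d)$ can be covered by $N(r,\epsilon)$ balls of radius $\epsilon$, where $N(r,\epsilon)$ depends only on $r$, $\epsilon$ and the local doubling constant, \emph{not} on $\lambda$. Together with the uniform bound $\diam(B^{\lambda}_{r}(x),\frac1\lambda d)\le 2r$, Gromov's precompactness theorem gives that $\{(B^{\lambda}_{r}(x),\frac1\lambda d)\}_{\lambda\in(0,\epsilon]}$ is precompact in the Gromov--Hausdorff topology for every $r$; equivalently, $\{(X,\frac1\lambda d,x)\}_{\lambda\in(0,\epsilon]}$ is precompact in the pointed Gromov--Hausdorff topology.

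Next, given any sequence $\lambda_{n}\to 0$, precompactness lets me pass to a subsequence along which $(X,\frac1{\lambda_{n}}d,x)$ converges in the pointed Gromov--Hausdorff topology, and the lemma identifying blow-up limits forces the limit to be $(T_{x}X,d_{x})$. As the limit is independent of the chosen sequence, $(X,\frac1\lambda d,x)\to(T_{x}X,d_{x})$ as $\lambda\to 0$, which already gives the ``unique limit of the blow-ups'' assertion. Moreover $(T_{x}X,d_{x})$ is then a pointed Gromov--Hausdorff limit of spaces whose $r$-balls are, by the first step, uniformly totally bounded; since $(T_{x}X,d_{x})$ is complete (being a metric completion), its closed balls around the apex are therefore compact, so $(T_{x}X,d_{x})$ is locally compact --- in fact proper. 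At this point the lemma stating that local compactness of $(T_{x}X,d_{x})$ implies precompactness of the blow-ups also applies directly and re-confirms uniqueness of the limit, completing the proof.

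The only real point requiring attention is the first step: one must verify that the doubling constant of the rescaled balls can be chosen uniformly in $\lambda$, which is exactly where it matters that local doubling governs all small scales at once rather than a single scale; one should also check that uniform total boundedness of the $r$-balls is inherited by the pointed Gromov--Hausdorff limit, which is immediate from the metric-coupling description of $d_{GH}$ recalled in the preliminaries. Everything else is bookkeeping on top of the two preceding lemmas and Gromov's precompactness theorem.
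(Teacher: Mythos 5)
Your proof is correct and is exactly the argument the paper has in mind: local doubling gives $\lambda$-uniform covering numbers on the rescaled balls, Gromov precompactness gives precompactness of the blow-ups, the first preceding lemma identifies every subsequential limit with $(T_xX,d_x)$, and then local compactness of $T_xX$ follows from completeness plus the totally bounded balls inherited in the pointed Gromov--Hausdorff limit. The paper leaves the corollary without a proof precisely because this is the intended composition of Gromov precompactness with the two preceding lemmas, so there is nothing to flag beyond the scale-uniformity point you already identified.
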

By \cite{LeDonne2011} it can be shown that for topologically and
measure-theoretically almost all points $(T_{x}M,d_{x})$ is a (finite
dimensional) Carnot group. However, the limit does not have to be
Busemann concave so that Proposition \ref{prop:Carnot-Busemann} cannot
be applied. Nevertheless, the results above show that a homogeneous
tangent cone is necessarily both weakly Busemann concave and the central
contraction is actually affine. Both should imply that it has to be
a Banach space. This, in particular, would imply that the theory of
Busemann concave spaces are metric generalizations of Finsler manifolds.

\subsection{Hausdorff measure, doubling and Poincar\'e\label{subsec:Hausdorff-doubling-Poincare}}

The Hausdorff measure is a natural measure associated to a metric
space. For finite dimensional Alexandrov spaces it is known that there
is an integer $n$ such that the $n$-dimensional Hausdorff measure
is non-trivial, i.e. non-zero and locally finite \cite{Burago1992}.
Furthermore, if the space is non-negatively curved then this measure
is doubling. For general Busemann concave spaces, we currently cannot
show that the Hausdorff measure is non-trivial if the space is finite
dimensional w.r.t. to any meaningful dimension definition. However,
we will show that if the Hausdorff measure is non-trivial then it
is doubling and satisfies a $(1,1)$-Poincar\'e inequality. Furthermore,
it also satisfies the measure contraction property which is a (very)
weak form of non-negative Ricci curvature. It is likely that further
analysis shows that Busemann concavity implies that the space satisfies
$CD(0,n)$, i.e. Busemann concave spaces have non-negative $n$-dimensional
Ricci curvature in the sense of Lott-Sturm-Villani. 

Let $\delta>0$ and $S$ be a subset of $X$. Define 
\[
\mathcal{H}_{\delta}^{n}(S)=C_{n}\inf\left\{ \sum_{i\in\mathbb{N}}(\frac{1}{2}\diam U_{i})^{n}\,|\,S\subset\bigcup_{i\in\mathbb{N}}U_{i},\diam U_{i}<\delta\right\} 
\]
where $C_{n}$ is a constant such that if $(X,d)=(\mathbb{R}^{n},d_{\operatorname{Euclid}})$
the measure $\mathcal{H}^{n}$ equals the Lebesgue measure on the
$n$-dimensional Euclidean space. Note that $\mathcal{H}_{\delta}^{n}(S)$
is decreasing in $\delta$ so that we can define the \emph{$n$-dimensional
Hausdorff measure} of $S$ as follows 
\[
\mathcal{H}^{n}(S)=\sup_{\delta>0}\mathcal{H}_{\delta}^{n}(S)=\lim_{\delta\to0}\mathcal{H}_{\delta}^{n}(S).
\]

From the definition it follows that $\mathcal{H}^{n}$ is an outer
measure. Furthermore, one can show that each Borel set of $X$ is
$\mathcal{H}^{n}$-measurable and thus $\mathcal{H}^{n}$ a Borel
measure.

Note that we have the following: if for some $n$ it holds $\mathcal{H}^{n}(S)<\infty$
then $\mathcal{H}^{n'}(S)=0$ for all $n'>n$. Also if $\mathcal{H}^{n}(S)>0$
then $\mathcal{H}^{n'}(S)=\infty$ for all $0<n'<n$. In particular,
for each $S$ there is at most one $n$ with $0<\mathcal{H}^{n}(S)<\infty$.
Therefore, we can assign to each bounded set a number called \emph{Hausdorff
dimension} 
\begin{eqnarray*}
\dim_{H}S & = & \inf\{n\in[0,\infty)\,|\text{\,}\mathcal{H}^{n}(S)=0\}\\
 & = & \sup\{n\in[0,\infty)\,|\text{\,}\mathcal{H}^{n}(S)=\infty\}
\end{eqnarray*}
with conventions $\inf\varnothing=\infty$ and $\sup\varnothing=0$.
Now denote the \emph{local Hausdorff dimension at $x$} by
\[
\dim_{H}X(x)=\inf_{x\in U\,\operatorname{open}}\dim_{H}U.
\]

Given points $x,y\in X$ we can choose a geodesic $\gamma_{xy}$ connecting
$x$ and $y$ such that for any $t\in[0,1]$ the map
\[
\Phi:(y,x,t)\mapsto\gamma_{xy}(t)
\]
is a measurable function. Without loss of generality it is possible
to choose $\Phi$ in a symmetric way, i.e. $\Phi(y,x,t)=\Phi(x,y,1-t)$.

Let $\Omega$ be some subset of $X$ and define $\Omega_{t}=\Phi(\Omega,x,t)$.
Denote its inverse by $g:\Omega_{t}\to\Omega$. Note that this map
is onto and Busemann concavity implies it is $t^{-1}$-Lipschitz.
Let $\{U_{i}\}_{i\in\mathbb{N}}$ be a $\delta$-cover of $\Omega_{t}$
then $\{g(U_{i})\}_{i\in\mathbb{N}}$ is a $t^{-1}\delta$-cover of
$\Omega$. Furthermore, it holds
\[
\sum(\diam g(U_{i}))^{n}\le\frac{1}{t^{n}}\sum(\diam U_{i})^{n}
\]
and hence

\[
\mathcal{H}_{t^{-1}\delta}^{n}(\Omega)\le\frac{1}{t^{n}}\mathcal{H}_{\delta}^{n}(\Omega_{t}).
\]
Taking the limit as $\delta\to0$ on both sides we see that 
\begin{equation}
\mathcal{H}^{n}(\Omega)\le\frac{1}{t^{n}}\mathcal{H}^{n}(\Omega_{t}).\label{eq:Hausdorff-contraction}
\end{equation}

Note that this implies that if $\mathcal{H}^{n}(B_{r}(x))<\infty$
then $\mathcal{H}^{n}(\Omega)<\infty$ for all bounded $\Omega$.
Indeed, there is a $t>0$ depending only on $\Omega,x$ and $r$ such
that $\Omega_{t}=\Phi(\Omega,x,t)\subset B_{r}(x)$. 
\begin{lem}
In a Busemann concave space $(X,d)$ the Hausdorff dimension of bounded
open subsets is equal to a fixed number $n\in\mathbb{N}\cup\{\infty\}$
which depends only the space itself. In particular, $\dim_{H}X(x)\equiv\mbox{const}$.
\end{lem}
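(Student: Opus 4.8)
The whole statement rests on the scaling estimate \eqref{eq:Hausdorff-contraction}, whose derivation never used that the exponent is adapted to the space, so it holds with an arbitrary exponent $m\ge 0$ in place of $n$; together with the monotonicity of the outer measures $\mathcal{H}^m$ this is essentially all that is needed for the first (and main) part. The plan is: first show that any two nonempty bounded open subsets of $X$ have the same Hausdorff dimension, then deduce the statement about $\dim_H X(x)$, and finally address the integrality of the common value, which is the only delicate point.

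So let $U,V\subset X$ be nonempty bounded open sets. Fix $p\in V$ and $r>0$ with $B_r(p)\subset V$, and set $D:=\sup_{y\in U}d(p,y)$, which is finite since $U$ is bounded. Pick $t\in(0,1)$ with $tD<r$ and use the measurable geodesic selection $\Phi$ with base point $p$: for $y\in U$ we have $d(p,\Phi(y,p,t))=t\,d(p,y)\le tD<r$, so $U_t:=\Phi(U,p,t)\subset B_r(p)\subset V$. Applying \eqref{eq:Hausdorff-contraction} with exponent $m$ and then monotonicity of $\mathcal{H}^m$,
\[
\mathcal{H}^m(U)\ \le\ t^{-m}\,\mathcal{H}^m(U_t)\ \le\ t^{-m}\,\mathcal{H}^m(V)\qquad\text{for every }m\ge 0 .
\]
Hence $m>\dim_H V$ implies $\mathcal{H}^m(V)=0$ and therefore $\mathcal{H}^m(U)=0$, so $\dim_H U\le\dim_H V$; exchanging the roles of $U$ and $V$ gives the reverse inequality. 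Write $n$ for this common value.

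For the ``in particular'', observe that for every $x\in X$ and $\epsilon>0$ the ball $B_\epsilon(x)$ is a nonempty bounded open set, so $\dim_H B_\epsilon(x)=n$ by what precedes. If $U$ is any open set with $x\in U$, choose $\epsilon$ with $B_\epsilon(x)\subset U$; monotonicity of the $\mathcal{H}^m$ gives $\dim_H U\ge\dim_H B_\epsilon(x)=n$. Taking the infimum over such $U$, which is attained at $U=B_\epsilon(x)$, yields $\dim_H X(x)=n$, independently of $x$.

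What remains — and this is the point I expect to require real work — is that $n\in\mathbb N\cup\{\infty\}$. If $n=\infty$ there is nothing to do, so suppose $n<\infty$; then $X$ has finite topological dimension as well, since the topological dimension never exceeds the Hausdorff dimension. It suffices to exhibit one integer $k$ and one ball on which $0<\mathcal{H}^k<\infty$, for then the first part of the proof forces $n=k$. The natural way to do this is a strainer-type argument in the spirit of Alexandrov geometry \cite{Burago1992}: at a well-chosen point one seeks a maximal family of ``independent directions'', now read off from the contraction maps $\Phi(\cdot,x,t)$ as $t\to 0$, with the scaling estimate \eqref{eq:Hausdorff-contraction} playing the role of the Bishop--Gromov/angle comparison and the non-branching of $X$ (which Laakso-type examples show cannot be dropped) ensuring that distinct directions stay quantitatively separated under blow-up. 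Making this precise — in particular finding the right surrogate for angles in the merely Busemann-concave setting — is where the essential difficulty lies; alternatively one could try to match $\dim_H$ with the covering dimension directly, exploiting that $\Phi$ renders $X$ locally contractible.
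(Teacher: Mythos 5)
Your argument for the constancy of the Hausdorff dimension is essentially the paper's own proof: both use that the contraction $\Phi(\cdot\,,x,t)$ maps one bounded open set into a ball inside the other for $t$ small, together with the scaling estimate \eqref{eq:Hausdorff-contraction} (which, as you correctly observe, holds for every exponent $m\ge 0$ since its derivation is only a Lipschitz bound). Your write-up is actually the more careful of the two, since you spell out the symmetric inclusion argument and the passage to the local dimension $\dim_H X(x)$, which the paper compresses into ``now it is easy to see''.

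You are also right to single out the claim $n\in\mathbb N\cup\{\infty\}$ as the only nontrivial issue, and the honest answer is: the paper does not prove it. The paper's proof stops at $\dim_H\Omega=\dim_H\Omega'$ and says nothing about why the common value should be an integer; the integrality is simply asserted in the statement. So you should not feel you missed a hidden short argument---there isn't one in the text. Your instinct that this would need a strainer-type or dimension-comparison argument (as for Alexandrov spaces in \cite{Burago1992}) is a reasonable guess at what a complete proof would require, but be aware that nothing of the sort is carried out in the paper, and it is not obvious that Busemann concavity alone suffices: the paper later observes it cannot even show the Hausdorff measure is nontrivial when the dimension is finite. In short, your proposal proves exactly what the paper proves (constancy and $\dim_H X(x)\equiv$ const) by the same method, and correctly flags the integrality as an unresolved point rather than fabricating an argument for it.
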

\begin{proof}
Let $\Omega,\Omega'$ be two bounded subsets with non-empty interior.
Then there is an $x\in\Omega,x'\in\Omega'$ and $r,t>0$ such that
$\Omega_{t}\subset B_{r}(x')\subset\Omega'$ and $\Omega_{t}^{'}\subset B_{r}(x)\subset\Omega$
where $\Omega_{t}=\Phi(\Omega,x',t)$ and $\Omega_{t}^{'}=\Phi(\Omega',x,t)$.
Hence, by \ref{eq:Hausdorff-contraction} it holds 
\[
\mathcal{H}^{n}(\Omega)\le\frac{1}{t^{n}}\mathcal{H}^{n}(\Omega^{'})
\]
and 
\[
\mathcal{H}^{n}(\Omega')\le\frac{1}{t^{n}}\mathcal{H}(\Omega).
\]
Now it is easy to see that $\dim_{H}\Omega=\dim_{H}\Omega'$ and that
this number equals the local Hausdorff dimension at $x$.
\end{proof}
In case the Hausdorff dimension is finite, it is still not clear if
the corresponding measure is non-trivial, i.e. $0<\mathcal{H}^{n}(B_{r}(x))<\infty$
for some $x\in X$ and $r>0$. However, if the $n$-dimensional Hausdorff
measure is non-trivial, then the space enjoys nice properties.
\begin{prop}
\label{prop:Busemann-MCP-et-al}Assume $(X,d)$ is a complete Busemann
concave metric space admitting a non-trivial Hausdorff measure. Then
$(X,d,\mathcal{H}^{n})$ satisfies the measure contraction property
$MCP(0,n)$, the Bishop-Gromov volume comparison $BG(0,n)$ and a
(weak) $(1,1)$-Poincar\'e inequality. In particular, $\mathcal{H}^{n}$
is a doubling measure with doubling constant $2^{n}$. 
\end{prop}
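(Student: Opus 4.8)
The plan is to deduce the whole statement from the contraction estimate \eqref{eq:Hausdorff-contraction}, which already carries the analytic content; everything else is bookkeeping. Recall from the construction preceding \eqref{eq:Hausdorff-contraction} that $\Phi$ is a (jointly measurable) geodesic selection, and that a Busemann concave space is non-branching, so $\gamma_{xy}$ is the a.e.\ unique geodesic. Fix $x\in X$ and a bounded measurable $E$ with $0<\mathcal H^{n}(E)<\infty$, and let $E_{t}=\Phi(E,x,t)=\{\gamma_{xe}(t):e\in E\}$ be the set of $t$-intermediate points between $x$ and $E$. Then \eqref{eq:Hausdorff-contraction} reads $\mathcal H^{n}(E_{t})\ge t^{n}\mathcal H^{n}(E)$; since the $MCP(0,n)$-distortion coefficient is exactly $t\mapsto t^{n}$ and $\Phi$ supplies the required measurable family of connecting geodesics, this is the defining inequality of $MCP(0,n)$. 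So $(X,d,\mathcal H^{n})$ satisfies $MCP(0,n)$ with no further work. (Note also that by the remark after \eqref{eq:Hausdorff-contraction}, non-triviality of $\mathcal H^{n}$ forces $0<\mathcal H^{n}(B)<\infty$ for every ball $B$, which is used below.)

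For Bishop--Gromov I would specialise to balls. Fix $x$ and $0<r\le R$; since $d(x,\gamma_{xe}(t))=t\,d(x,e)$, choosing $E=B_{R}(x)$ and $t=r/R$ gives $(B_{R}(x))_{r/R}\subset B_{r}(x)$, whence
\[
\mathcal H^{n}(B_{r}(x))\ \ge\ \mathcal H^{n}\!\big((B_{R}(x))_{r/R}\big)\ \ge\ \Big(\tfrac rR\Big)^{n}\mathcal H^{n}(B_{R}(x)).
\]
Thus $r\mapsto r^{-n}\mathcal H^{n}(B_{r}(x))$ is non-increasing, i.e.\ $BG(0,n)$ holds, and taking $R=2r$ gives $\mathcal H^{n}(B_{2r}(x))\le 2^{n}\mathcal H^{n}(B_{r}(x))$, i.e.\ doubling with constant $2^{n}$. (Alternatively $MCP(0,n)\Rightarrow BG(0,n)$ is the standard implication.)

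The substantive part is the $(1,1)$-Poincar\'e inequality, and the plan is the classical route through a segment inequality, with \eqref{eq:Hausdorff-contraction} again supplying the one non-trivial step. Fix $B=B_{r}(x_{0})$ and write $3B=B_{3r}(x_{0})$. For $x\in B$ and $t\in[0,1]$ set $T^{x}_{t}(y)=\gamma_{xy}(t)=\Phi(y,x,t)$, so $T^{x}_{t}(B)=\Phi(B,x,t)$; applying \eqref{eq:Hausdorff-contraction} to $(T^{x}_{t})^{-1}(S)$ with base point $x$ yields, for every Borel $S\subset T^{x}_{t}(B)$, the estimate $\mathcal H^{n}\big((T^{x}_{t})^{-1}(S)\big)\le t^{-n}\mathcal H^{n}(S)$, i.e.\ $(T^{x}_{t})_{*}(\mathcal H^{n}|_{B})\le t^{-n}\mathcal H^{n}$. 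Since $d(x_{0},\gamma_{xy}(t))\le d(x_{0},x)+t\,d(x,y)\le 3r$ we have $T^{x}_{t}(B)\subset 3B$, so for any Borel $g\ge0$ and $t\in[\tfrac12,1]$,
\[
\int_{B}g\big(\gamma_{xy}(t)\big)\,d\mathcal H^{n}(y)\ \le\ t^{-n}\int_{3B}g\,d\mathcal H^{n}\ \le\ 2^{n}\int_{3B}g\,d\mathcal H^{n}.
\]
Writing $\int_{\gamma_{xy}}g=d(x,y)\int_{0}^{1}g(\gamma_{xy}(t))\,dt$, splitting the $t$-integral at $\tfrac12$, using $\gamma_{xy}(t)=\gamma_{yx}(1-t)$ and the symmetry of $B\times B$ to identify the two halves, bounding $d(x,y)\le 2r$, and integrating in $x$ over $B$, one obtains the segment inequality
\[
\int_{B}\!\int_{B}\Big(\int_{\gamma_{xy}}g\Big)\,d\mathcal H^{n}(x)\,d\mathcal H^{n}(y)\ \le\ 2^{n+1}\,r\,\mathcal H^{n}(B)\int_{3B}g\,d\mathcal H^{n}.
\]
From here the weak $(1,1)$-Poincar\'e inequality is the usual averaging step: for $u$ with upper gradient $g_{u}$ one has $|u(x)-u(y)|\le\int_{\gamma_{xy}}g_{u}$ along the rectifiable geodesic $\gamma_{xy}$, so averaging $x,y$ over $B$, inserting the segment inequality, and absorbing $\mathcal H^{n}(3B)/\mathcal H^{n}(B)\le 3^{n}$ (from $BG(0,n)$) bounds the mean oscillation of $u$ over $B$ by $C(n)\,r$ times the mean of $g_{u}$ over $3B$.

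I expect the real care to lie not in the inequalities but in the measure-theoretic details: choosing $\Phi$ jointly measurable (and symmetric) so that the double integrals and Fubini interchanges are legitimate (properness makes $\mathcal H^{n}$ $\sigma$-finite and Radon once it is non-trivial); verifying that $(T^{x}_{t})^{-1}$ sends Borel sets to $\mathcal H^{n}$-measurable sets so that \eqref{eq:Hausdorff-contraction} applies; and handling the divergence of $t^{-n}$ as $t\to0$, which is precisely why one cuts the geodesic at its midpoint and uses each endpoint as contraction centre for the half near it. One could instead just invoke the known implication that $MCP(0,n)$ together with local doubling yields a local $(1,1)$-Poincar\'e inequality, but the direct argument above is short and makes the constants explicit.
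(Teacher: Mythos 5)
Your proposal follows essentially the same route as the paper: Bishop--Gromov, doubling and $MCP(0,n)$ all fall out of \eqref{eq:Hausdorff-contraction} together with the measurable selection $\Phi$ exactly as you say (the paper routes the $MCP$ step through \cite[Lemma 2.3]{Ohta2007a}, which encapsulates the same observation), and the Poincar\'e inequality is derived by the same Hua-style segment argument with the split at $t=\tfrac12$, the symmetric choice of $\Phi$, and the push-forward form of the contraction estimate. The only cosmetic difference is that the paper proves the unaveraged form $\int_B|u-u_B|\,d\mathcal H^n\le 2^{n+1}r\int_{3B}g_u\,d\mathcal H^n$ directly, so no Bishop--Gromov normalization factor is needed.
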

\begin{rem*}
We refer to \cite{Ohta2007a} for the exact definition of the measure
contraction property and to \cite{Bjorn2011,Heinonen2015} for definitions
of Poincar\'e and doubling conditions and their influence on the
geometry and analysis of metric spaces.
\end{rem*}
\begin{proof}
Since $\Phi$ is measurable we can apply \cite[Lemma 2.3]{Ohta2007a}.

The Bishop-Gromov volume comparison and the doubling property follows
once we notice that $\Omega=B_{R}(x)$ implies that $\Omega_{\frac{r}{R}}\subset B_{r}(x)$.
Equation \ref{eq:Hausdorff-contraction} then implies 
\[
\frac{\mathcal{H}^{n}(B_{R}(x))}{\mathcal{H}^{n}(B_{r}(x)).}\le\frac{R^{n}}{r^{n}}=\frac{\mathcal{V}_{0,n}(R)}{\mathcal{V}_{0,n}(r)}\quad\mbox{for }0<r<R.
\]
In particular, $r\mapsto\frac{\mathcal{H}^{n}(B_{r}(x))}{\mathcal{V}_{0,n}(r)}$
is non-increasing. 

A standard argument implies that a weak $(1,1)$-Poincar\'e inequality
holds (see e.g. \cite[Lemma 3.3]{Hua2010}), i.e. it holds
\begin{eqnarray*}
\int_{B_{r}(x)}|u-u_{B_{r}(x)}|d\mathcal{H}^{n} & \le & 2^{n+1}r\int_{B_{3r}(x)}g_{u}d\mathcal{H}^{n}
\end{eqnarray*}
where $g_{u}$ is a weak upper gradient of $u$. 

We sketch the argument given in \cite[Lemma 3.3]{Hua2010}: It suffices
to assume $u\in\operatorname{Lip}(X,d)$ and $g_{u}=\operatorname{lip}u$
where $\operatorname{lip}f$ is the local Lipschitz constant of $u$.
Set $B=B_{r}(x)$ and $u_{B}=\frac{1}{\mathcal{H}^{n}(B)}\int_{B}ud\mathcal{H}^{n}$.
Then for every geodesic $\gamma_{xy}:[0,1]\to X$ connecting $x,y\in B_{r}(x)$
it holds 
\[
|u(y)-u(z)|\le d(y,z)\int_{0}^{1}g_{u}(\gamma_{yz}(t))dt.
\]
Thus 
\begin{eqnarray*}
\int_{B}|u-u_{B}|d\mathcal{H}^{n} & \le & \frac{1}{\mathcal{H}^{n}(B)}\int_{B}\int_{B}|u(y)-u(z)|d\mathcal{H}^{n}(z)d\mathcal{H}^{n}(y)\\
 & \le & \frac{2r}{\mathcal{H}^{n}(B)}\int_{B}\int_{B}\int_{0}^{1}g_{u}(\gamma_{yz}(t))dtd\mathcal{H}^{n}(z)d\mathcal{H}^{n}(y)\\
 & \le & \frac{4r}{\mathcal{H}^{n}(B)}\int_{B}\int_{B}\int_{\frac{1}{2}}^{1}g_{u}(\gamma_{yz}(t))dtd\mathcal{H}^{n}(z)d\mathcal{H}^{n}(y)
\end{eqnarray*}
where the last inequality follows by choosing the geodesics $\gamma_{yz}$
in a symmetric way. Note that the measure contraction property implies
\[
\int_{B}f(\gamma_{xy}(t))d\mathcal{H}^{n}(y)\le\frac{1}{t^{n}}\int_{B_{tr}(x)}f(z)\mathcal{H}^{n}(z)
\]
for all non-negative $f\in L_{\operatorname{loc}}^{\infty}(X,\mathcal{H}^{n})$
and $t\in(0,1]$, see \cite[Equations (2.2)]{Ohta2007a}. Therefore,
we obtain 
\begin{eqnarray*}
\int_{B}|u-u_{B}|d\mathcal{H}^{n} & \le & \frac{4r}{\mathcal{H}^{n}(B)}\int_{B}\int_{\frac{1}{2}}^{1}\int_{B_{2r}(y)}g_{u}(\gamma_{yz}(t))d\mathcal{H}^{n}(z)dtd\mathcal{H}^{n}(y)\\
 & \le & \frac{4r}{\mathcal{H}^{n}(B)}\int_{B}\int_{\frac{1}{2}}^{1}\frac{1}{t^{n}}\int_{B_{2tr}(y)}g_{u}(w)d\mathcal{H}^{n}(w)dtd\mathcal{H}^{n}(y)\\
 & \le & \frac{4r}{\mathcal{H}^{n}(B)}\int_{B}\int_{\frac{1}{2}}^{1}\frac{1}{t^{n}}\int_{B_{3r}(x)}g_{u}(w)d\mathcal{H}^{n}(w)dtd\mathcal{H}^{n}(y)\\
 & \le & 2^{n+1}r\int_{B_{3r}(x)}g_{u}d\mathcal{H}^{n}.
\end{eqnarray*}
\end{proof}
\begin{rem*}
In order to prove the Bishop inequality it remains to show that 
\[
\lim_{r\to0}\frac{\mathcal{H}^{n}(B_{r}(x))}{r^{n}}=\omega_{n}
\]
where $\omega_{n}=\mathcal{V}_{0,n}(1)$. This holds at points where
the (blow-up) tangent space is isometric to an $n$-dimensional normed
space as the Hausdorff measure of balls equals the volume of the $n$-dimensional
Euclidean balls of same radius and the pointed Gromov-Hausdorff convergence
is compatible with the measured Gromov-Hausdorff convergence if the
reference measures are non-collapsing Hausdorff measures of the same
dimension.
\end{rem*}

\subsection{Bonnet-Myers theorem\label{subsec:Bonnet-Myers-theorem}}

Throughout this section we assume that geodesics are parametrized
by arc length, i.e. they are unit speed geodesics. This will simplify
some of the proofs below. 

Recall a fact on triangles in $S^{2}$: Let $a,b,c\in(0,\pi]$ with
$a+b+c\le2\pi$. Then there is a triangle formed by unit speed geodesics
$\tilde{\gamma},\tilde{\eta},\tilde{\xi}$ of length $a,b,c$ with
$\tilde{\gamma}_{0}=\tilde{\eta}_{0}$, $\tilde{\gamma}_{a}=\tilde{\xi}_{0}$
and $\tilde{\eta}_{b}=\tilde{\xi}_{c}$. Furthermore, if $a=b\ge\frac{\pi}{2}$
and $a+b+c=2\pi$ then 
\[
d_{S^{2}}(\tilde{\gamma}_{\frac{\pi}{2}},\tilde{\eta}_{\frac{\pi}{2}})=\pi
\]
and $\tilde{\eta}_{0}=\tilde{\eta}_{0}$ is a midpoint of the pair
$(\tilde{\gamma}_{\frac{\pi}{2}},\tilde{\eta}_{\frac{\pi}{2}})$.
The proof of the Bonnet-Myers theorem relies heavily on this rigidity.

We say that the complete geodesic metric space $(X,d)$ has \emph{Busemann
curvature bounded below by $1$} if for all unit speed geodesics $\gamma,\eta,\xi$
in $X$ of length $a,b,c$ with $\gamma_{0}=\eta_{0}$, $\gamma_{a}=\xi_{0}$
and $\eta_{b}=\xi_{c}$ such that there is a corresponding comparison
triangle in $S^{2}$ of length $a,b,c\ge0$ then it holds 
\[
d(\gamma_{ta},\eta_{tb})\ge d_{S^{2}}(\tilde{\gamma}_{ta},\tilde{\eta}_{tb})
\]
for all $t\in[0,1]$. 

Using a slightly different notion of positive curvature Kann \cite{Kann1961}
obtained a Bonnet-Myers theorem for two dimensional $G$-spaces which
are positively curved in his sense. Note that his proof heavily relies
on the notion of two dimensionality as well as local extendability
of geodesics. The proof of the Bonnet-Myers theorem below is inspired
by the one for $MCP(K,N)$-spaces \cite[Section 4]{Ohta2007a}. The
main idea is to replace the density estimates by length estimates.
However, the technique is quite different and some steps are easier
to prove in the current setting. 

Before we start we need the following characterization of non-branching
spaces that are not $1$-dimensional: A geodesic metric space is said
to be \emph{not $1$-dimensional }if for any unit speed geodesic $\gamma:[0,a]\to X$
and $\epsilon>0$ there is a $y\in B_{a}(x)$ with $d(\gamma_{a},y)<\epsilon$
but $y\notin\gamma_{[0,a]}$. 

For non-branching $1$-dimensional spaces one gets the following rigidity.
We leave the details to the interested reader, compare also with \cite[Theorem (9.6)]{Busemann1955}.
\begin{lem}
If $(X,d)$ is $1$-dimensional and non-branching then it is isometric
to a closed interval $I\subset X$ or a circle $S_{\lambda}^{1}$
of length $\lambda>0$.
\end{lem}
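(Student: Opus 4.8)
The plan is to fix a unit speed geodesic in $X$ and extend it maximally, then argue that its image is all of $X$, and finally identify the global shape from the non-branching hypothesis. First I would pick any two distinct points and a unit speed geodesic $\gamma:[0,\ell]\to X$ between them. Using the ``not 1-dimensional'' negation together with non-branching, I claim $\gamma$ admits a maximal locally geodesic extension $\sigma:J\to X$ (with $J$ an interval, possibly unbounded): at each endpoint $\gamma_\ell$ of a geodesic segment, the 1-dimensionality condition says that every point near $\gamma_\ell$ at distance $<\epsilon$ from $\gamma_\ell$ already lies on the existing segment once $\epsilon$ is small — so locally the space near $\gamma_\ell$ is just an arc, and pushing past $\gamma_\ell$ one obtains either a genuine extension or a return to an earlier point of the segment. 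The non-branching assumption ensures the extension, when it exists, is unique.

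Next I would show $\sigma(J) = X$. Since $(X,d)$ is geodesic, any point $z\in X$ is joined to a fixed basepoint $p=\sigma(t_0)$ by a geodesic $\tau$; I want to argue $\tau$ runs inside $\sigma(J)$. The key step is that near $p$ the 1-dimensionality condition (applied to the already-constructed segment through $p$) forces $\tau$ to agree with $\sigma$ on a small initial interval, and then non-branching propagates this agreement as far as both are defined; iterating (or taking the supremum of the set where they agree, which is closed by continuity and open by the local argument) gives $z\in\sigma(J)$. Hence $X$ is the continuous image of an interval under a locally isometric, globally 1-Lipschitz map $\sigma$.

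Finally, to pin down the global structure I split on whether $\sigma$ is injective. If $\sigma$ is injective on its whole domain, then since it is locally isometric and $X$ is a length space, $\sigma$ is a global isometry onto a closed interval $I$ (closedness/properness of $X$ forces $J$ to be a closed, possibly infinite, interval; if infinite, $X$ is isometric to a ray or line, which one still regards as a ``closed interval'' in the extended sense, or one notes properness rules this out when $X$ is compact — I would state it for $I\subset\mathbb R$). If $\sigma$ is not injective, let $\lambda>0$ be the smallest period, i.e. $\sigma(t_0)=\sigma(t_0+\lambda)$ with $\sigma$ injective on $[t_0,t_0+\lambda)$; the non-branching assumption forces $\sigma$ to close up smoothly (the two locally geodesic arcs arriving at $\sigma(t_0)$ from opposite sides must fit together without branching), so $\sigma$ descends to an isometric embedding of the circle $S^1_\lambda$ of length $\lambda$ onto $X$, and a short check using the length metric shows it is onto and distance-preserving.

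The main obstacle I expect is the bookkeeping in the extension/exhaustion argument: carefully deducing from the rather weak ``not 1-dimensional'' negation that $X$ is locally an arc at \emph{every} point (not just at endpoints of one chosen geodesic), and that the maximal extension genuinely exhausts $X$ rather than, say, spiraling or accumulating. Handling the non-branching closure of the curve in the periodic case — ensuring one really gets a metric circle and not, e.g., a ``lollipop'' — is the delicate rigidity point, and is exactly where the hypothesis that $X$ is non-branching (as opposed to merely 1-dimensional) does the essential work.
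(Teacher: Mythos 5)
The paper itself gives no proof of this lemma: right before the statement it says ``We leave the details to the interested reader, compare also with [Theorem (9.6), Busemann 1955].'' So there is nothing in-paper to compare against, and the proposal has to be judged on its own.

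Your overall plan (fix a geodesic, extend it maximally, argue that the extension exhausts $X$, then read off interval vs.\ circle from injectivity and non-branching) is a sensible direction and is close to what a correct argument would look like. But as written there is a genuine gap, and it is precisely the one you flag at the end: you quantify the $1$-dimensionality hypothesis incorrectly and then build all three steps on the stronger version. The paper's definition of ``not $1$-dimensional'' is a universal statement (``for \emph{any} unit speed geodesic $\gamma:[0,a]\to X$ and \emph{any} $\epsilon>0$ there is a bad $y$''), so its negation --- the hypothesis you actually have --- is purely existential: there is \emph{one} geodesic $\gamma:[0,a]\to X$ and \emph{one} $\epsilon>0$ such that every $y\in B_a(\gamma_0)$ with $d(\gamma_a,y)<\epsilon$ lies on $\gamma$. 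It does \emph{not} say that every endpoint of every geodesic segment has a neighborhood living on that segment (your phrasing ``at each endpoint $\gamma_\ell$ of a geodesic segment, the $1$-dimensionality condition says\ldots''), and it does not even give full two-sided control at the witnessing endpoint $\gamma_a$ --- only control inside the ball $B_a(\gamma_0)$, i.e.\ on the ``inward'' side. Your extension step, your exhaustion step (``near $p$ the $1$-dimensionality condition (applied to the already-constructed segment through $p$) forces $\tau$ to agree with $\sigma$''), and your closing-up step for the circle case all invoke the local-arc property at arbitrary points, which is exactly what has not been established. Acknowledging the obstacle does not discharge it: a correct proof has to show how the local arc structure at the single witnessed endpoint, together with non-branching and properness/geodesicity, forces a global one-dimensional structure --- and that propagation is the whole content of the lemma.
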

Until the end of this section we assume that $(X,d)$ has Busemann
curvature bounded below by $1$. 
\begin{lem}
\label{lem:contract}If $\gamma,\eta:[0,\pi-a]\to X$ are two unit
speed geodesics starting at $x\in X$ with $d(\gamma_{\frac{\pi}{2}},\eta_{\frac{\pi}{2}})<\pi$
and $a\in(0,\frac{\pi}{2}]$ then for any $s\in[a,\frac{\pi}{2}]$
it holds $d(\gamma_{\pi-s},\eta_{\pi-s})<2s$.
\end{lem}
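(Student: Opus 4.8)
We have two unit-speed geodesics $\gamma,\eta:[0,\pi-a]\to X$ from $x$, with $d(\gamma_{\pi/2},\eta_{\pi/2})<\pi$, and we want $d(\gamma_{\pi-s},\eta_{\pi-s})<2s$ for $s\in[a,\pi/2]$. The plan is to feed the geodesic triangle with vertices $x=\gamma_0=\eta_0$, $\gamma_{\pi-a}$, $\eta_{\pi-a}$ into the Busemann curvature bound comparison in $S^2$, but the subtlety is that we don't have the comparison triangle directly for the full sides; instead I would run the comparison on the sub-triangle cut off at the midpoint time $\pi/2$, or better, argue directly from the lower curvature bound applied to the hinge at $x$.

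**Main step.** First I would set $D := d(\gamma_{\pi/2},\eta_{\pi/2})<\pi$ and consider the triangle formed by the restrictions $\gamma|_{[0,\pi/2]}$, $\eta|_{[0,\pi/2]}$ and a geodesic $\xi$ of length $D$ joining $\gamma_{\pi/2}$ to $\eta_{\pi/2}$. Its side lengths are $\pi/2,\pi/2,D$, which admit a comparison triangle in $S^2$ since $\pi/2+\pi/2+D = \pi+D < 2\pi$ and each side is at most $\pi$. In that comparison triangle the angle $\tilde\alpha$ at the vertex corresponding to $x$ satisfies, by the spherical law of cosines, $\cos D = \cos^2(\pi/2) + \sin^2(\pi/2)\cos\tilde\alpha = \cos\tilde\alpha$, so $\tilde\alpha = D<\pi$. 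Now extend the comparison picture: let $\tilde\gamma,\tilde\eta:[0,\pi-a]\to S^2$ be unit-speed geodesics from a common point making angle $\tilde\alpha=D$. Because the Busemann lower curvature bound is a hinge-type inequality (comparison of $d(\gamma_{ta},\eta_{tb})$ with the spherical model for the hinge with the given side lengths and the opposite side), applying it to the hinge at $x$ with sides of length $\pi-a$ and the angle realized by the sub-triangle above gives
\[
d(\gamma_{\pi-s},\eta_{\pi-s}) \ge d_{S^2}(\tilde\gamma_{\pi-s},\tilde\eta_{\pi-s})
\]
— but wait, this is the wrong direction for what we want. The point of the lemma is an \emph{upper} bound $d(\gamma_{\pi-s},\eta_{\pi-s})<2s$, so the comparison bound is not used to bound the distance in question directly; instead it is used to control the \emph{angle}: the hypothesis $d(\gamma_{\pi/2},\eta_{\pi/2})<\pi$ forces the comparison angle $\tilde\alpha<\pi$, and then the upper bound $d(\gamma_{\pi-s},\eta_{\pi-s})\le d(\gamma_{\pi-s},x)+d(x,\eta_{\pi-s})=2(\pi-s)$ combined with strict inequality somewhere must be sharpened.

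**Resolving the direction issue.** The correct approach: apply the curvature bound to the triangle with vertices $x$, $\gamma_{\pi-s}$, $\eta_{\pi-s}$ directly, using that its side from $x$ have length $\pi-s\ge\pi/2$, and observe that if $d(\gamma_{\pi-s},\eta_{\pi-s})$ were $\ge 2s$ then, since $2(\pi-s)+2s = 2\pi$, the comparison triangle in $S^2$ would be the \emph{degenerate} rigid one described in the recalled fact before the lemma (with $a=b=\pi-s\ge\pi/2$, $c=2s$, sum $=2\pi$), in which the midpoints at parameter $\pi/2$ are antipodal: $d_{S^2}(\tilde\gamma_{\pi/2},\tilde\eta_{\pi/2})=\pi$. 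The Busemann lower bound then gives $d(\gamma_{\pi/2},\eta_{\pi/2})\ge d_{S^2}(\tilde\gamma_{\pi/2},\tilde\eta_{\pi/2})=\pi$, contradicting the hypothesis $d(\gamma_{\pi/2},\eta_{\pi/2})<\pi$. Hence $d(\gamma_{\pi-s},\eta_{\pi-s})<2s$.

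**Obstacle and care points.** The main technical obstacle is making sure the comparison triangle in $S^2$ exists and that the monotonicity "$d(\gamma_{\pi-s},\eta_{\pi-s})\ge 2s$ forces $d_{S^2}(\tilde\gamma_{\pi/2},\tilde\eta_{\pi/2})=\pi$" is applied correctly: one should treat separately the boundary case $d(\gamma_{\pi-s},\eta_{\pi-s})=2s$ (which already forces the rigid spherical configuration and the antipodal midpoints, hence the contradiction) and note that for $d>2s$ no spherical comparison triangle with sides $\pi-s,\pi-s,d$ exists because the sum would exceed $2\pi$ or a side would exceed $\pi$ — in that regime one instead argues that $d(\gamma_{\pi-s},\eta_{\pi-s})\le 2(\pi-s)\le 2(\pi-a)$ and uses an intermediate comparison at a smaller parameter, or simply notes the triangle inequality plus the just-obtained strict bound at the extreme admissible configuration. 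I would also double-check the edge case $s=\pi/2$, where $\pi-s=\pi/2$ and the claim reads $d(\gamma_{\pi/2},\eta_{\pi/2})<\pi$, which is exactly the hypothesis, so the lemma is consistent at that endpoint and the real content is for $s<\pi/2$.
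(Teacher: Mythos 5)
Your final rigidity argument is exactly the paper's: when $d(\gamma_{\pi-s_0},\eta_{\pi-s_0})=2s_0$ the triangle $(x,\gamma_{\pi-s_0},\eta_{\pi-s_0})$ has perimeter $2\pi$, the spherical comparison triangle is the degenerate one with $d_{S^2}(\tilde\gamma_{\pi/2},\tilde\eta_{\pi/2})=\pi$, and the Busemann lower bound then forces $d(\gamma_{\pi/2},\eta_{\pi/2})\ge\pi$, contradicting the hypothesis. But there is a genuine gap in how you dispose of the case $d(\gamma_{\pi-s},\eta_{\pi-s})>2s$. You correctly observe that in that regime the comparison triangle does not exist, so the curvature bound is vacuous; however the fixes you then propose (``an intermediate comparison at a smaller parameter, or simply notes the triangle inequality plus the just-obtained strict bound at the extreme admissible configuration'') do not amount to an argument. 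The triangle inequality only gives $d(\gamma_{\pi-s},\eta_{\pi-s})\le 2(\pi-s)$, and for $s<\pi/2$ one has $2(\pi-s)>2s$, so this bound is fully compatible with the case you are trying to exclude.

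The missing ingredient is an intermediate-value argument that reduces any alleged failure to the equality case where your rigidity argument actually bites. The function $s\mapsto d(\gamma_{\pi-s},\eta_{\pi-s})-2s$ is continuous on $[a,\pi/2]$ and is negative at $s=\pi/2$ (that is precisely the hypothesis $d(\gamma_{\pi/2},\eta_{\pi/2})<\pi$). Hence, if it were nonnegative at some $s_1\in[a,\pi/2)$, there would be a \emph{largest} $s_0\in[s_1,\pi/2)$ where it vanishes; at that $s_0$ the perimeter of $(x,\gamma_{\pi-s_0},\eta_{\pi-s_0})$ equals $2\pi$, the comparison triangle exists, and your contradiction goes through. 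This is exactly what the paper does (it even establishes strict inequality on a whole neighborhood of $\pi/2$ via a $\limsup$ estimate on the perimeter, slightly more than is needed). Make this step explicit. Also, the first paragraph's spherical-law-of-cosines hinge-angle computation is never used in your final argument and should be dropped; as you note yourself, it points in the wrong direction.
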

\begin{proof}
Not first that 
\[
\limsup_{s\to\frac{\pi}{2}^{\text{\textsuperscript{-}}}}d(x,\gamma_{\pi-s})+d(x,\gamma_{\pi-s})+d(\gamma_{\pi-s},\eta_{\pi-s})<2\pi.
\]
Thus 
\[
\limsup_{s\to\frac{\pi}{2}^{-}}(d(\gamma_{\pi-s},\eta_{\pi-s})-2s)<0.
\]
In particular, for $s$ sufficiently close to $\frac{\pi}{2}$ it
holds $d(\gamma_{\pi-s},\eta_{\pi-s})<2s$. 

Assume the statement was not true. Then there is a largest $s_{0}\in[a,\frac{\pi}{2})$
with $d(\gamma_{\pi-s_{0}},\eta_{\pi-s_{0}})=2s_{0}$. The assumptions
imply that there is a comparison triangle of the triangle formed by
$(x,\gamma_{\pi-s_{0}},\eta_{\pi-s_{0}})$ such that for some unit
speed geodesics $\tilde{\gamma},\tilde{\eta}:[0,\pi-s_{0}]\to S^{2}$
it holds $d(\gamma_{\pi-s_{0}},\eta_{\pi-s_{0}})=d(\tilde{\gamma}_{\pi-s_{0}},\tilde{\eta}_{\pi-s_{0}})$
and 
\[
d(\gamma_{t},\eta_{t})\ge d_{S^{2}}(\tilde{\gamma}_{t},\tilde{\eta}_{t})\qquad\mbox{for }t\in[0,\pi-s_{0}].
\]
However, the comparison triangle satisfies 
\[
d_{S^{2}}(\tilde{\gamma}_{\frac{\pi}{2}},\tilde{\eta}_{\frac{\pi}{2}})=\pi
\]
which would contradict the assumptions $d(\gamma_{\frac{\pi}{2}},\eta_{\frac{\pi}{2}})<\pi$.
\end{proof}
\begin{lem}
\label{lem:approx}Assume $(X,d)$ is not $1$-dimensional. Then for
$\epsilon>0$ and all unit speed geodesics $\gamma,\eta:[0,\pi-s]\to X$
with $s\in(0,\frac{\pi}{2})$ there is a unit speed geodesic $\eta^{(\epsilon)}:[0,\pi-s_{\epsilon}]\to X$
such that $s_{\epsilon}\ge s$, $d(\eta_{\pi-s},\eta_{\pi-s_{\epsilon}}^{\epsilon})<\epsilon$
and 
\[
d(\gamma_{\frac{\pi}{2}},\eta_{\frac{\pi}{2}}^{\epsilon})<\pi.
\]
\end{lem}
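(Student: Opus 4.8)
The plan is to dispose of the generic case immediately and reduce the exceptional case to a rigidity argument built on non-branching and the non-$1$-dimensionality hypothesis. Throughout, $\gamma$ and $\eta$ are understood to emanate from a common point $x=\gamma_0=\eta_0$, as in Lemma~\ref{lem:contract}; since $s<\frac\pi2$, both are defined on $[0,\frac\pi2]$, and $d(\gamma_{\frac\pi2},\eta_{\frac\pi2})\le d(\gamma_{\frac\pi2},x)+d(x,\eta_{\frac\pi2})=\pi$. If this last inequality is strict I would simply take $\eta^{(\epsilon)}=\eta$ and $s_\epsilon=s$; so the only real case is $d(\gamma_{\frac\pi2},\eta_{\frac\pi2})=\pi$. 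Also, since every condition imposed on $\eta^{(\epsilon)}$ becomes weaker as $\epsilon$ increases, I may assume $\epsilon<\frac\pi2-s$.

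In the remaining case I would feed the unit speed geodesic $\eta\colon[0,\pi-s]\to X$ into the non-$1$-dimensionality hypothesis to obtain a point $y$ with $d(x,y)\le\pi-s$, $d(\eta_{\pi-s},y)<\epsilon$ and $y\notin\eta_{[0,\pi-s]}$, and set $s_\epsilon:=\pi-d(x,y)$, so that $s_\epsilon\ge s$. From $d(x,y)\ge d(x,\eta_{\pi-s})-d(\eta_{\pi-s},y)>(\pi-s)-\epsilon>\frac\pi2$ one gets $s\le s_\epsilon<\frac\pi2$, so in particular $\pi-s_\epsilon>\frac\pi2$. Taking $\eta^{(\epsilon)}\colon[0,\pi-s_\epsilon]\to X$ to be a unit speed geodesic from $x$ to $y$, the first two required properties hold automatically and it only remains to check $d(\gamma_{\frac\pi2},\eta^{(\epsilon)}_{\frac\pi2})<\pi$.

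For the last point I would argue by contradiction: if $d(\gamma_{\frac\pi2},\eta^{(\epsilon)}_{\frac\pi2})=\pi$ (it cannot be larger), then, using $d(\gamma_{\frac\pi2},x)=d(x,\eta_{\frac\pi2})=d(x,\eta^{(\epsilon)}_{\frac\pi2})=\frac\pi2$, the concatenation of $\gamma|_{[0,\frac\pi2]}$ reversed with $\eta|_{[0,\frac\pi2]}$ is a path of length $\pi$ between two points at distance $\pi$, hence a geodesic; the same is true with $\eta^{(\epsilon)}$ replacing $\eta$. Viewed as two geodesics $[0,\pi]\to X$ they coincide on $[0,\frac\pi2]$, so they pass through $x$ at the interior parameter $\frac\pi2$, and non-branching forces them to coincide on all of $[0,\pi]$; restricting to the second half gives $\eta|_{[0,\frac\pi2]}=\eta^{(\epsilon)}|_{[0,\frac\pi2]}$. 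Now $\eta$ and $\eta^{(\epsilon)}$ are unit speed geodesics from $x$, both defined on $[0,\pi-s_\epsilon]$ (as $s_\epsilon\ge s$), agreeing at $0$ and at the interior parameter $\frac\pi2<\pi-s_\epsilon$; non-branching again makes them agree on $[0,\pi-s_\epsilon]$, whence $y=\eta^{(\epsilon)}_{\pi-s_\epsilon}=\eta_{\pi-s_\epsilon}\in\eta_{[0,\pi-s]}$, contradicting the choice of $y$.

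The crux is the reduction to the equality case $d(\gamma_{\frac\pi2},\eta_{\frac\pi2})=\pi$: once this holds the broken path through $x$ is automatically a genuine geodesic, and non-branching does the rest, with non-$1$-dimensionality supplying an endpoint $y$ off $\eta$ so that the forced coincidence $\eta=\eta^{(\epsilon)}$ becomes absurd. The main thing to watch is the bookkeeping $s\le s_\epsilon<\frac\pi2$ (ensuring $\eta^{(\epsilon)}_{\frac\pi2}$ is defined and $\frac\pi2$ is an interior parameter), which is exactly why one first shrinks $\epsilon$ below $\frac\pi2-s$; one should also confirm that a lower Busemann curvature bound forces $X$ to be non-branching, just as Busemann concavity does.
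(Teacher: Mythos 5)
Your proof is correct and takes essentially the same route as the paper's. Both reduce immediately to the equality case $d(\gamma_{\frac\pi2},\eta_{\frac\pi2})=\pi$, invoke the non-$1$-dimensionality hypothesis to produce a point $y$ near $\eta_{\pi-s}$ but off $\eta$, take $\eta^{(\epsilon)}$ to be a geodesic from $x$ to $y$, and then observe that equality $d(\gamma_{\frac\pi2},\eta^{(\epsilon)}_{\frac\pi2})=\pi$ would force $x$ to be a midpoint of both $(\gamma_{\frac\pi2},\eta_{\frac\pi2})$ and $(\gamma_{\frac\pi2},\eta^{(\epsilon)}_{\frac\pi2})$, giving a contradiction via non-branching — your concatenated-geodesic phrasing is just a more spelled-out version of that midpoint argument. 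Your closing remark that non-branching still needs to be justified from the lower Busemann curvature bound (rather than from Busemann concavity) is a fair observation; the paper uses it tacitly in this section, and it does follow from the $S^2$-comparison definition by a short rigidity argument on small triangles.
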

\begin{proof}
If $d(\gamma_{\frac{\pi}{2}},\eta_{\frac{\pi}{2}})<\pi$ then we can
choose $\tilde{\eta}=\eta$. Assume $d(\gamma_{\frac{\pi}{2}},\eta_{\frac{\pi}{2}})=\pi$.
Since $(X,d)$ is not $1$-dimensional there is a sequence of points
$(y_{n})_{n\in\mathbb{N}}$ in $B_{\pi-s}(x)$ such that $y_{n}\to\eta_{\pi-s}$
and $y_{n}\ne\eta(\pi-s_{n})$ where $s_{n}=\pi-d(x,y_{n})$. Let
$\eta^{n}:[0,\pi-s_{n}]\to X$ be a unit speed geodesic connecting
$x$ and $y_{n}$. Note by non-branching $\eta_{\frac{\pi}{2}}^{n}\ne\eta_{\frac{\pi}{2}}$.

We claim that $d(\gamma_{\frac{\pi}{2}},\eta_{\frac{\pi}{2}}^{n})<\pi$.
Indeed, equality would imply that $x$ is a midpoint of both $(\gamma_{\frac{\pi}{2}},\eta_{\frac{\pi}{2}})$
and $(\gamma_{\frac{\pi}{2}},\eta_{\frac{\pi}{2}}^{n})$ which is
not possible by the non-branching assumption. 
\end{proof}
In the following we write $\diam\varnothing=0$. Thus $\diam A=0$
implies that $A$ contains at most one element. 
\begin{cor}
\label{cor:contract}If $(X,d)$ is not $1$-dimensional and $\gamma,\eta$
are as in the lemma then 
\[
d(\gamma_{\pi-s},\eta_{\pi-s})\le2s
\]
for $s\in[0,\frac{\pi}{2}]$. In particular, $\diam\partial B_{\pi-s}(x)\le2s$
for $s\in[0,\frac{\pi}{2}]$.
\end{cor}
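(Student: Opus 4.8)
The plan is to combine Lemma \ref{lem:contract} with the approximation provided by Lemma \ref{lem:approx} and then pass to a limit. Fix $x\in X$, fix unit speed geodesics $\gamma,\eta:[0,\pi-s]\to X$ starting at $x$, and fix $s\in(0,\tfrac{\pi}{2})$ (the boundary cases $s=0$ and $s=\tfrac{\pi}{2}$ follow by continuity of $d$ along the geodesics, or are trivial). If $d(\gamma_{\pi/2},\eta_{\pi/2})<\pi$ then Lemma \ref{lem:contract}, applied with $a=s$, already yields $d(\gamma_{\pi-s},\eta_{\pi-s})<2s$, which is stronger than needed. So the only case requiring work is $d(\gamma_{\pi/2},\eta_{\pi/2})=\pi$.

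In that case I would invoke Lemma \ref{lem:approx}: for each $\epsilon>0$ there is a unit speed geodesic $\eta^{(\epsilon)}:[0,\pi-s_\epsilon]\to X$ with $s_\epsilon\ge s$, with $d(\eta_{\pi-s},\eta^{(\epsilon)}_{\pi-s_\epsilon})<\epsilon$, and with $d(\gamma_{\pi/2},\eta^{(\epsilon)}_{\pi/2})<\pi$. Since $d(\gamma_{\pi/2},\eta^{(\epsilon)}_{\pi/2})<\pi$ and $s_\epsilon\in(0,\tfrac{\pi}{2}]$, I can apply Lemma \ref{lem:contract} to the pair $\gamma,\eta^{(\epsilon)}$ with $a=s_\epsilon$ (if $s_\epsilon<\tfrac{\pi}{2}$; if $s_\epsilon=\tfrac{\pi}{2}$ the conclusion $d(\gamma_{\pi/2},\eta^{(\epsilon)}_{\pi/2})<\pi=2\cdot\tfrac{\pi}{2}$ is immediate), obtaining $d(\gamma_{\pi-s_\epsilon},\eta^{(\epsilon)}_{\pi-s_\epsilon})<2s_\epsilon$. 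Now use that $\gamma$ is a unit speed geodesic from $x$, so $d(\gamma_{\pi-s},\gamma_{\pi-s_\epsilon})=|s_\epsilon-s|=s_\epsilon-s$, together with the triangle inequality:
\[
d(\gamma_{\pi-s},\eta_{\pi-s})\le d(\gamma_{\pi-s},\gamma_{\pi-s_\epsilon})+d(\gamma_{\pi-s_\epsilon},\eta^{(\epsilon)}_{\pi-s_\epsilon})+d(\eta^{(\epsilon)}_{\pi-s_\epsilon},\eta_{\pi-s})< (s_\epsilon-s)+2s_\epsilon+\epsilon.
\]
Letting $\epsilon\to0$ forces $s_\epsilon\to s$ (since $s\le s_\epsilon$ and $d(\eta_{\pi-s},\eta^{(\epsilon)}_{\pi-s_\epsilon})<\epsilon$ with $d(x,\eta^{(\epsilon)}_{\pi-s_\epsilon})=\pi-s_\epsilon$ forces $|s_\epsilon-s|<\epsilon$ by $1$-Lipschitzness of $d(x,\cdot)$), so the right side tends to $2s$, giving $d(\gamma_{\pi-s},\eta_{\pi-s})\le2s$.

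For the last assertion, let $y,z\in\partial B_{\pi-s}(x)$. Choose unit speed geodesics from $x$ to $y$ and to $z$; these have length $\pi-s$, so the first part (applied to this pair) gives $d(y,z)\le 2s$, hence $\diam\partial B_{\pi-s}(x)\le 2s$. The main obstacle is the bookkeeping in the limiting step: one must check that $s_\epsilon\to s$ and that Lemma \ref{lem:contract} is genuinely applicable to $\eta^{(\epsilon)}$ for all small $\epsilon$ (in particular that $s_\epsilon>0$, which holds because $d(x,\eta^{(\epsilon)}_{\pi-s_\epsilon})\ge d(x,\eta_{\pi-s})-\epsilon=\pi-s-\epsilon$, so $s_\epsilon\le s+\epsilon<\tfrac{\pi}{2}$ for $\epsilon$ small, and $s_\epsilon\ge s>0$). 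Everything else is the triangle inequality and continuity of the distance.
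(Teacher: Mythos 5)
Your proposal matches the paper's proof in all essentials: both invoke Lemma~\ref{lem:approx} to produce approximating geodesics $\eta^{(\epsilon)}$ (resp.\ $\eta^n$) with $d(\gamma_{\pi/2},\eta^{(\epsilon)}_{\pi/2})<\pi$, apply Lemma~\ref{lem:contract} at parameter $s_\epsilon$, and pass to the limit using $s_\epsilon\to s$ and continuity of the distance. Your version is just slightly more explicit — making the case split $d(\gamma_{\pi/2},\eta_{\pi/2})<\pi$ vs.\ $=\pi$ (redundant, since Lemma~\ref{lem:approx} already absorbs the first case), writing out the triangle inequality behind the paper's ``$d(\gamma_{\pi-s},\eta_{\pi-s})=\lim d(\gamma_{\pi-s_n},\eta^n_{\pi-s_n})$'', and checking $s_\epsilon\le s+\epsilon<\tfrac{\pi}{2}$ so that Lemma~\ref{lem:contract} is applicable — but it is the same argument.
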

\begin{proof}
Assume first $s>0$. Using Lemma \ref{lem:approx} we find a sequence
of unit speed geodesics $\eta^{n}:[0,\pi-s_{n}]\to X$ with $s_{n}\ge s$,
$\eta_{\pi-s_{n}}^{n}\to\eta_{\pi-s}$ and $d(\gamma_{\frac{\pi}{2}},\eta_{\frac{\pi}{2}}^{n})<\pi$.
Thus we can apply Lemma \ref{lem:contract} to $\gamma|_{[0,\pi-s_{n}]}$
and $\eta^{n}$ and get 
\[
d(\gamma_{\pi-s_{n}},\eta_{\pi-s_{n}}^{n})<2s_{n}.
\]
Since $s_{n}\to s$ we see that 
\[
d(\gamma_{\pi-s},\eta_{\pi-s})=\lim_{n\to\infty}d(\gamma_{\pi-s_{n}},\eta_{\pi-s_{n}}^{n})\le\lim_{n\to\infty}2s_{n}=2s.
\]

The case $s=0$ is obtain via approximation. 
\end{proof}
Combining the results we obtain the Bonnet-Myers theorem. 
\begin{thm}
[Bonnet-Myers Theorem]\label{thm:Bonnet-Myers}Assume $(X,d)$ has
Busemann curvature bounded below by $1$. If $(X,d)$ is not $1$-dimensional
then the diameter of $X$ is at most $\pi$. 
\end{thm}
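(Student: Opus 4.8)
The plan is to argue by contradiction. Suppose $\diam X>\pi$, fix $x,y\in X$ with $D:=d(x,y)>\pi$, and fix a unit speed geodesic $\gamma:[0,D]\to X$ from $x$ to $y$.

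The first step is to read off a rigidity consequence of Corollary \ref{cor:contract} by putting $s=0$: for any two unit speed geodesics $\alpha,\beta:[0,\pi]\to X$ issuing from a common point one gets $d(\alpha_\pi,\beta_\pi)\le 0$, hence $\alpha_\pi=\beta_\pi$. Equivalently, $\partial B_\pi(x)$ contains at most one point, say $p$ (it is nonempty, since $\gamma_\pi\in\partial B_\pi(x)$). This is exactly where the hypothesis that $X$ is not $1$-dimensional enters, as that is what makes Corollary \ref{cor:contract} — and hence Lemmas \ref{lem:contract} and \ref{lem:approx} — applicable.

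The second step uses the ``not $1$-dimensional'' hypothesis once more, now applied to $\gamma$ itself: for every $\epsilon>0$ there is $y'\in B_D(x)$ with $d(\gamma_D,y')<\epsilon$ and $y'\notin\gamma_{[0,D]}$. Taking $\epsilon<D-\pi$ forces $d(x,y')\ge D-\epsilon>\pi$, so a unit speed geodesic $\eta:[0,d(x,y')]\to X$ from $x$ to $y'$ is defined on $[0,\pi]$ and, by the first step, $\eta_\pi=\gamma_\pi=p$. Thus $\gamma$ and $\eta$ are two geodesics issuing from the common point $x$ which agree at the interior parameter $\pi\in\bigl(0,d(x,y')\bigr)$; non-branching — which is in force under the standing curvature assumption and is already invoked in the proof of Lemma \ref{lem:approx} — then forces $\gamma\equiv\eta$ on $[0,d(x,y')]$. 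In particular $y'=\eta_{d(x,y')}=\gamma_{d(x,y')}$, and since $d(x,y')\le D$ this puts $y'$ on $\gamma_{[0,D]}$, contradicting the choice of $y'$. Therefore $\diam X\le\pi$.

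The substance of the argument already lives in the preceding lemmas: Corollary \ref{cor:contract} packages the spherical rigidity that a comparison triangle with $a=b\ge\frac{\pi}{2}$ and $a+b+c=2\pi$ has antipodal $t$-midpoints, so what remains is short. Within the proof itself the only point needing care is ensuring $d(x,y')>\pi$, so that $\pi$ is a genuine interior parameter of $\eta$ and the non-branching step applies; this is arranged by shrinking $\epsilon$ below $D-\pi$. I do not anticipate any further obstacle.
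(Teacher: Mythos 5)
Your argument is correct and follows essentially the same route as the paper's: both reduce the theorem to $\diam\partial B_\pi(x)=0$ via Corollary \ref{cor:contract}, extend a geodesic from $x$ past parameter $\pi$, and use the not--$1$--dimensional hypothesis together with non-branching to manufacture a contradiction. Your write-up is actually a bit more explicit than the paper's at the final step --- the paper simply asserts the existence of $\tilde{x}\in\partial B_\pi(x)$ distinct from $\xi(\pi)$, whereas you spell out that the geodesic to the nearby off-axis point $y'$ would have to pass through $\gamma_\pi$ and hence, by non-branching, coincide with $\gamma$, contradicting $y'\notin\gamma_{[0,D]}$ --- but that is the same reasoning, not a different proof.
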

\begin{rem*}
By scaling one sees that $\diam X\le\frac{\pi}{\sqrt{k}}$ if the
space has Busemann curvature bounded below by $k>0$.
\end{rem*}
\begin{proof}
By Corollary \ref{cor:contract} we have $\diam\partial B_{\pi}(x)=0$
for all $x\in X$. In particular, $\partial B_{\pi}(x)$ contains
at most one element. 

If $\diam X\ge\pi$ then $\partial B_{\pi}(x)=\{x^{*}\}$ for some
$x,x^{*}\in X$. Assume by contradiction $\diam X>\pi$. Then for
some sufficiently small $\epsilon>0$ there is a unit speed geodesic
$\xi:[0,\pi+\epsilon]\to X$ starting at $x$. Since $d(x,\xi(\pi))=\pi$
we must have $\xi(\pi)=x^{*}$. However, $(X,d)$ is not $1$-dimensional
and geodesic so that there is a $\tilde{x}$ with $d(x,\tilde{x})=\pi$
and $d(\xi(\pi),\tilde{x})>0$ which contradicts the fact that $\diam\partial B_{\pi}(x)=0$.
\end{proof}
By the same arguments it is possible to show that if $(X,d)$ is not
$1$-dimensional then for all unit speed geodesics $\gamma:[0,a]\to X$
and $\eta:[0,b]\to X$ starting at $x$ with $a,b\in[0,\pi)$, $a+b\ge\pi$
the triangle formed by $(x,\gamma_{a},\eta_{b})$ has circumference
at most $2\pi$ with strict inequality unless $d(\gamma_{ta},\eta_{tb})=\pi$
where $ta+tb=\pi$. In particular, given three point in $X$ there
is always a corresponding comparison triangle in $S^{2}$. 

Using this we can prove the following along the lines of \cite[Section 5]{Ohta2007p}:
if $d(x,x^{*})=\pi$ for some $x,x^{*}\in X$ then for any $z\in X$
it holds 
\[
d(x,z)+d(z,x^{*})=\pi
\]
and there is a unique unit speed geodesic $\gamma$ connecting $x$
and $x^{*}$ such that $\gamma(d(x,z))=z$. This implies that if $X$
is compact then it is homeomorphic to a suspension of the space $(\partial B_{\frac{\pi}{2}}(x),d)$.
We leave the details to the interested reader. An alternative proof
can be done along the lines of \cite{Ketterer2015c}. For this note
that Euclidean cone $\Con(X)$ is Busemann concave. If $\diam X=\pi$
then $\Con(X)$ contains a line and by Theorem \ref{thm:Splitting}
it splits so that $X$ must be homeomorphic to a spherical suspension.
As in the proof of the splitting theorem, it is not clear whether
$\partial B_{\frac{\pi}{2}}(x)$ is totally geodesic.

\section{Uniformly smooth spaces}

In this section we define a form global non-negative curvature via
``smoothness'' assumption on the metric. Indeed, any Riemannian
manifold whose distance is uniformly smooth (see below) must have
non-negative sectional curvature on all planes spanned by tangent
vectors which are tangent to a ray. However, locally the distance
is $C^{\infty}$ and automatically uniformly smooth. 

The notion is inspired by the theory of Banach spaces: Uniform smoothness
of the norm implies that Busemann functions are well-defined and linear.
In particular they are given as duals of the corresponding vector
which represents the ray. In this section we want to use uniform smoothness
to show that Busemann functions are quasi-convex. A stronger condition,
called $p$-uniformly convex, will give convexity. From this one can
obtain by an argument of Cheeger-Gromoll \cite{Cheeger1972} that
space is an exhaustion of convex sets and can be retracted to a compact
totally geodesic subspace, i.e. a variant of the soul theorem. Note,
however, in the smooth setting this retract can have a non-empty boundary,
so that the compact retract should rather be called a weak soul. 

In the end of this section we try to give a local version of non-negative
curvature inspired by Gromov's characterization of non-negative curvature
in terms of inward equidistant movements of convex hypersurfaces. 

\subsection{Uniform smoothness and convexity of Busemann functions}
\begin{defn}
[Uniform smoothness] A geodesic metric space is said to be uniformly
smooth if there is an non-decreasing function $\rho:(0,\infty)\to[0,\infty)$
such that $\frac{\rho(\epsilon)}{\epsilon}\to0$ and for all $x,y,z\in X$
with 
\[
d(y,z)\le\epsilon\min\{d(x,y),d(x,z)\}
\]
it holds 
\[
d(x,m)\ge(1-\rho(\epsilon))\min\{d(x,y),d(x,z)\}
\]
for all midpoint $m$ of $y$ and $z$.
\end{defn}
We leave it to the interested reader to show that this is equivalent
to the usual definition of uniform smoothness in case $X$ is a Banach
space. A stronger variant is the so called $p$-uniform smoothness
for $p\in(1,2]$.
\begin{defn}
[$p$-uniform smoothness] A geodesic metric space is said to be $p$-uniformly
smooth if there is a $C>0$ such that for all $x,y,z\in X$ and it
holds 
\[
d(x,m)^{p}\ge\frac{1}{2}d(x,y)^{p}+\frac{1}{2}d(x,z)^{p}-\frac{C}{4}d(y,z)^{p}
\]
for all midpoints $m$ of $y$ and $z$.
\end{defn}
Note that by Clarkson's inequality every $L^{p}$-space is $p'$-uniformly
smooth for $p'=\min\{p,2\}$. Furthermore, the dual of a $q$-uniformly
convex Banach space is $p$-uniformly smooth with $\frac{1}{p}+\frac{1}{q}=1$.

The following was proved by Ohta \cite[Theorem 4.2]{Ohta2008}. 
\begin{lem}
Any Berwald space of non-negative flag curvature is $2$-uniformly
smooth.
\end{lem}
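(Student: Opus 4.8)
The plan is to reduce the statement to a purely infinitesimal computation about the second variation of arclength, and then exploit the special structure of Berwald spaces. Recall that in a Berwald space the Chern connection coincides with a linear (affine) connection on the underlying manifold; in particular the exponential map and parallel transport behave as in the Riemannian case, and Jacobi fields along a geodesic satisfy a second-order linear ODE whose zeroth-order term is controlled by the flag curvature. The $2$-uniform smoothness inequality
\[
d(x,m)^{2}\ge\tfrac12 d(x,y)^{2}+\tfrac12 d(x,z)^{2}-\tfrac{C}{4}d(y,z)^{2}
\]
is, up to the constant $C$, exactly a convexity-type estimate for the function $p\mapsto \tfrac12 d(x,p)^{2}$ along the geodesic from $y$ to $z$; it says this function is "$C$-semiconcave" with a uniform constant. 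So the first step is to restate the claim as: there is a constant $C$, depending only on the compact Berwald manifold, such that $f=\tfrac12 d(x,\cdot)^{2}$ satisfies $f(\gamma(t))\le (1-t)f(\gamma(0))+t f(\gamma(1))+\tfrac{C}{8} t(1-t)\, d(\gamma(0),\gamma(1))^{2}$ along every unit-rescaled geodesic $\gamma$, i.e. $(f\circ\gamma)''\le C\cdot\text{(speed)}^2$ in the barrier sense.

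Next I would compute the Hessian of $\tfrac12 d(x,\cdot)^{2}$ away from the cut locus of $x$ using the index form / second variation formula. Along a minimizing geodesic $\sigma$ from $x$ to a point $q$, the Hessian of $\tfrac12 d(x,\cdot)^{2}$ in a direction $V$ is given by the index form of the Jacobi field $J$ with $J(0)=0$, $J(\ell)=V$ (here $\ell=d(x,q)$), and non-negative flag curvature makes this index form bounded \emph{above} by the corresponding quantity for a flat comparison, yielding $\operatorname{Hess}\bigl(\tfrac12 d(x,\cdot)^{2}\bigr)\le g_{\dot\sigma}$ in the reference inner product — this is the standard "distance squared is $1$-semiconcave under nonnegative curvature" statement, and in the Berwald setting it goes through because Jacobi field comparison only needs the linear connection and the flag curvature sign. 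On a \emph{compact} Berwald manifold the various Minkowski norms on tangent spaces are uniformly comparable to a fixed Riemannian background metric (the uniform convexity/smoothness of the norms is a compactness statement, cf. \cite[Corollary 4.4]{Ohta2008}), so this Hessian bound in the fibre inner product converts to a bound of the form $(f\circ\gamma)''\le C\,d(\gamma(0),\gamma(1))^2$ with $C$ depending only on the ellipticity constants of the fundamental tensor over the (compact) unit tangent bundle.

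Finally I would handle the points where $q$ lies in the cut locus of $x$, where $d(x,\cdot)^{2}$ is only semiconcave and not smooth: here one uses the standard barrier argument — approximate from a nearby point along the minimizing geodesic, or invoke that a semiconcave function with a one-sided Hessian bound in the smooth locus satisfies the same bound globally in the support/barrier sense — and integrate the differential inequality along $\gamma$ to obtain the claimed midpoint inequality with $C$ replaced by (a harmless multiple of) the same constant. The main obstacle I anticipate is precisely this last point together with making the Jacobi comparison rigorous in the non-Riemannian Berwald setting: one must check that the index-form comparison under $\mathrm{flag}\ge 0$ does not secretly use symmetry of the curvature operator or the Riemannian structure of the exponential map, only the affine connection and the sign of the flag curvature, and that the resulting fibrewise estimate can be uniformized over the compact unit tangent bundle. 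Since Ohta's paper \cite{Ohta2008} establishes exactly the needed uniform smoothness of the tangent norms and the Jacobi comparison on Berwald spaces, the cleanest route is to cite \cite[Theorem 4.2]{Ohta2008} for the computation and simply record the reduction above.
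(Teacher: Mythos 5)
The paper's own ``proof'' of this lemma is a bare citation to \cite[Theorem 4.2]{Ohta2008}, and you ultimately arrive at the same citation, so in effect you take the paper's route. Your reconstruction of what lies behind that theorem --- reducing to semiconcavity of $\tfrac12 d(x,\cdot)^{2}$, controlling the Hessian away from the cut locus via the index form and Jacobi comparison under nonnegative flag curvature, and passing through the cut locus with a barrier argument --- is the right picture.

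Two points in your sketch should be tightened, and one is a genuine (if minor) slip. First, you invoke compactness of the manifold to uniformize the fibrewise $2$-smoothness of the Minkowski norms, citing \cite[Corollary 4.4]{Ohta2008}; but the lemma is stated for \emph{arbitrary} Berwald spaces, not compact ones, so an argument that needs compactness proves less than claimed. In fact compactness is unnecessary here: the defining feature of a Berwald space is that parallel transport is a linear isometry between tangent Minkowski norms, so all fibres are isometric as normed spaces and a single fibre is automatically $2$-uniformly smooth (its $C^\infty$, strictly convex norm restricted to the compact unit sphere gives a uniform modulus). That is where the uniform constant $S$ in Ohta's Theorem~4.2 comes from in the Berwald case, no compactness of $M$ needed. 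Second, Ohta's Theorem~4.2 also carries a hypothesis $\mathcal{T}\ge -\delta$ on the tangent curvature, contributing an extra error term $\sim\delta$; to land exactly on $2$-uniform smoothness with a clean constant you must observe that Berwald spaces have $\mathcal{T}\equiv 0$ (this is exactly the point exploited again later in the paper's smooth-section lemma). With those two corrections your reduction is a faithful account of why the cited theorem applies.
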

However, as every Berwald space of non-negative flag curvature is
affinely equivalent to a Riemannian manifold of non-negative curvature
\cite{Szabo1981,Szabo2006}, one can obtain most topological and geometric
properties directly from the affinely equivalent Riemannian manifold
(see \cite{Kell2015}).

It is not difficult to show that $p$-uniform smoothness implies uniform
smoothness. Indeed, one has 
\[
d(x,m)^{p}\ge(1-\tilde{\rho})A
\]
where $A=\frac{1}{2}d(x,y)^{p}+\frac{1}{2}d(x,z)^{p}$ and $A\frac{4}{C}\tilde{\rho}=d(y,z)^{p}$.
Because $A\ge\min\{d(x,y),d(x,z)\}^{p}$, we have 
\[
d(x,m)\ge(1-\rho)\min\{d(x,y),d(x,z)\}
\]
where $\rho=\min\{1,\tilde{\rho}\}$. We also have 
\[
d(y,z)\le\left(\frac{4}{C}\right)^{\frac{1}{p}}\rho^{\frac{1}{p}}\min\{d(x,y),d(y,z)\}.
\]
In particular, we may choose 
\[
\rho(\epsilon)=\min\{\frac{C}{4}\epsilon^{p},1\}
\]
to conclude.

An integral part of the soul theorem is the following function which
we call \emph{Cheeger-Gromoll function} (w.r.t. $x_{0}\in X$) 
\[
b_{x_{0}}(x)=\sup b_{\gamma}(x)
\]
where the supremum is taken over all rays starting at $x_{0}$.
\begin{prop}
Assume $(X,d)$ is uniformly smooth. Then any Busemann function $b_{\gamma}$
associated to a ray $\gamma$ is quasi-convex. In particular, all
Cheeger-Gromoll functions are quasi-convex.
\end{prop}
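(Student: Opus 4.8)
The plan is to show directly that for any ray $\gamma$ and any geodesic $\sigma:[0,1]\to X$, the value $b_\gamma(\sigma(t))$ does not exceed $\max\{b_\gamma(\sigma(0)),b_\gamma(\sigma(1))\}$. Suppose for contradiction that $b_\gamma$ attains a strict interior maximum along $\sigma$, say at some $t_0\in(0,1)$ with $b_\gamma(\sigma(t_0))>b_\gamma(\sigma(0))$ and $b_\gamma(\sigma(t_0))>b_\gamma(\sigma(1))$; after restricting to a subinterval we may assume $\sigma$ is parametrized so that $\sigma(0)$ and $\sigma(1)$ are the endpoints and $m:=\sigma(1/2)$ is a midpoint with $b_\gamma(m)$ close to the interior maximum, in particular $b_\gamma(m)>\max\{b_\gamma(\sigma(0)),b_\gamma(\sigma(1))\}-\delta$ for small $\delta$. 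The idea is that uniform smoothness forces a point that is nearly a midpoint of $\sigma(0)$ and $\sigma(1)$ as seen from the far-away point $\gamma(t)$ to lie nearly on the geodesic from $\gamma(t)$, which contradicts the defining monotonicity/near-additivity built into the Busemann function.

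Concretely, first I would fix a large $t$ and look at the points $y=\sigma(0)$, $z=\sigma(1)$, and their distances to $x=\gamma(t)$. By Lemma~\ref{lem:fraction-busemann} all of $d(x,y),d(x,z),d(x,m)$ grow like $t$, and $d(y,z)=d(\sigma(0),\sigma(1))$ is a fixed constant; hence for $t$ large the hypothesis $d(y,z)\le\epsilon\min\{d(x,y),d(x,z)\}$ of the uniform smoothness definition holds with $\epsilon=\epsilon(t)\to 0$. Uniform smoothness then gives
\[
d(\gamma(t),m)\ge(1-\rho(\epsilon(t)))\min\{d(\gamma(t),\sigma(0)),d(\gamma(t),\sigma(1))\}.
\]
Now I would translate this into Busemann functions: subtracting from $t$ and letting $t\to\infty$, the left side yields $-b_\gamma(m)$ in the limit, and on the right side, since $\rho(\epsilon(t))/\epsilon(t)\to 0$ and $\epsilon(t)\min\{d(x,y),d(x,z)\}=d(y,z)$ is bounded, the error term $\rho(\epsilon(t))\cdot\min\{\dots\}$ tends to $0$; the surviving term gives $-\max\{b_\gamma(\sigma(0)),b_\gamma(\sigma(1))\}$ (the $\min$ of distances becomes a $\max$ of Busemann values after the sign flip $t-d$). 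Thus $b_\gamma(m)\le\max\{b_\gamma(\sigma(0)),b_\gamma(\sigma(1))\}$, contradicting our choice of $m$ for $\delta$ small enough. Since midpoints along any subgeodesic of $\sigma$ satisfy this, a standard dyadic argument upgrades it to $b_\gamma(\sigma(t))\le\max\{b_\gamma(\sigma(0)),b_\gamma(\sigma(1))\}$ for all $t$, i.e.\ $b_\gamma$ is quasi-convex. The statement for Cheeger--Gromoll functions $b_{x_0}=\sup_\gamma b_\gamma$ is then immediate, since a supremum of quasi-convex functions is quasi-convex (the sublevel set $\{b_{x_0}\le s\}=\bigcap_\gamma\{b_\gamma\le s\}$ is an intersection of totally geodesic sets, hence totally geodesic).

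The main obstacle is the bookkeeping in the limit $t\to\infty$: one must verify carefully that the error term $\rho(\epsilon(t))\min\{d(\gamma(t),\sigma(0)),d(\gamma(t),\sigma(1))\}$ really vanishes, which uses that this product equals $\tfrac{\rho(\epsilon(t))}{\epsilon(t)}\cdot d(\sigma(0),\sigma(1))$ with the first factor $\to 0$, and that the approximate-midpoint hypothesis is genuinely satisfiable along the chosen geodesic (here properness and the fact that $X$ is geodesic guarantee actual midpoints exist, and Busemann concavity is \emph{not} needed). A secondary subtlety is that $\epsilon(t)$ depends on which of $d(\gamma(t),\sigma(0))$ or $d(\gamma(t),\sigma(1))$ is the minimum, but since both are $t+O(1)$ this does not affect the limit. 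Once these estimates are pinned down, the dyadic-midpoint passage to full quasi-convexity is routine.
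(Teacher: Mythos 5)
Your core computation is exactly the paper's: fix two points, apply uniform smoothness at a midpoint with $x=\gamma(t)$, let $t\to\infty$, and use $\rho(\epsilon_t)/\epsilon_t\to 0$ together with $\epsilon_t\min\{d(\gamma(t),\sigma(0)),d(\gamma(t),\sigma(1))\}=d(\sigma(0),\sigma(1))$ to make the error term vanish, yielding $b_\gamma(m)\le\max\{b_\gamma(\sigma(0)),b_\gamma(\sigma(1))\}$. The opening by-contradiction framing is unnecessary (you end up running the direct argument anyway), and the paper leaves the midpoint-to-full-quasi-convexity step implicit where you make it explicit, but the substance of the two proofs is identical.
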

\begin{proof}
Fix distinct points $x,y\in X$ and assume $x,y\notin\gamma([t,\infty))$
for some large $t$. Let $\epsilon_{t}=\frac{d(x,y)}{\min\{d(x,\gamma_{t}),d(y,\gamma_{t})\}}$
and $m$ be a midpoint of $x$ and $y$. Then by uniform smoothness
it holds 
\begin{eqnarray*}
t-d(m,\gamma_{t}) & \le & t-(1-\rho(\epsilon_{t}))\min\{d(x,\gamma_{t}),d(y,\gamma_{t})\}.\\
 & = & \max\{t-d(x,\gamma_{t}),t-d(y,\gamma_{t})\}-\frac{\rho(\epsilon_{t})}{\epsilon_{t}}d(x,y).
\end{eqnarray*}
Note $t\to\infty$ implies $\epsilon_{t}\to0$ so that the rightmost
term converges to zero. But then
\begin{eqnarray*}
b_{\gamma}(m) & = & \lim_{t\to\infty}t-d(m,\gamma_{t})\\
 & \le & \lim_{t\to\infty}\max\{t-d(x,\gamma_{t}),t-d(y,\gamma_{t})\}\\
 & = & \max\{b_{\gamma}(x),b_{\gamma}(y)\}.
\end{eqnarray*}
Since the Cheeger-Gromoll functions are suprema of quasi-convex functions
they are quasi-convex as well.
\end{proof}
\begin{prop}
\label{prop:convex-Busemann}Assume $(X,d)$ is $p$-uniformly smooth.
Then Busemann function $b_{\gamma}$ associated to any ray $\gamma$
is convex. In particular, all Cheeger-Gromoll functions are convex.
\end{prop}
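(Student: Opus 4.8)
The plan is to derive midpoint convexity of $b_\gamma$ along every geodesic directly from the $p$-uniform smoothness inequality applied with apex $\gamma_t$, and then to upgrade this to full geodesic convexity using continuity. Fix $x,y\in X$, let $m$ be any midpoint of $x$ and $y$, and set $a=b_\gamma(x)$, $b=b_\gamma(y)$. Since $s\mapsto s-d(\gamma_s,\cdot)$ is non-decreasing with limit $b_\gamma(\cdot)$, we have $d(\gamma_t,x)\ge t-a$ and $d(\gamma_t,y)\ge t-b$ for every $t\ge0$. Applying $p$-uniform smoothness to the triple $(\gamma_t,x,y)$ with midpoint $m$ yields, for $t$ large,
\[
d(\gamma_t,m)^p\ \ge\ \frac{1}{2}d(\gamma_t,x)^p+\frac{1}{2}d(\gamma_t,y)^p-\frac{C}{4}d(x,y)^p\ \ge\ \frac{1}{2}(t-a)^p+\frac{1}{2}(t-b)^p-\frac{C}{4}d(x,y)^p ,
\]
and by convexity of $s\mapsto s^p$ on $[0,\infty)$ the sum of the first two terms on the right is bounded below by $\bigl(t-\frac{a+b}{2}\bigr)^p$.

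The analytic heart of the argument is taking $p$-th roots and letting $t\to\infty$. Put $s=t-\frac{a+b}{2}$ and $K=\frac{C}{4}d(x,y)^p$; then for $t$ large enough that $s^p\ge K$ the bound above reads $d(\gamma_t,m)\ge (s^p-K)^{1/p}$, hence
\[
t-d(\gamma_t,m)\ \le\ \frac{a+b}{2}+\bigl(s-(s^p-K)^{1/p}\bigr) .
\]
Since $p>1$, we have $s-(s^p-K)^{1/p}=s\bigl(1-(1-Ks^{-p})^{1/p}\bigr)\to 0$ as $s\to\infty$, so letting $t\to\infty$ gives $b_\gamma(m)\le\frac{1}{2}b_\gamma(x)+\frac{1}{2}b_\gamma(y)$. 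This is precisely the place where $p$-uniform smoothness is essential rather than mere uniform smoothness: the additive error $K$ is a fixed constant, and it is the exponent $p>1$ alone that makes it vanish in the limit; for $p=1$ the argument breaks down, consistent with only quasi-convexity being available in the uniformly smooth case.

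To conclude, fix any geodesic $\gamma_{xy}:[0,1]\to X$ and set $f(t)=b_\gamma(\gamma_{xy}(t))$. For any $t_0,t_1\in[0,1]$ the point $\gamma_{xy}\bigl(\frac{t_0+t_1}{2}\bigr)$ is a midpoint of $\gamma_{xy}(t_0)$ and $\gamma_{xy}(t_1)$, so the inequality just proved gives $f\bigl(\frac{t_0+t_1}{2}\bigr)\le\frac{1}{2}f(t_0)+\frac{1}{2}f(t_1)$; since $b_\gamma$ is $1$-Lipschitz (immediate from the triangle inequality in its defining limit), $f$ is continuous, and a continuous midpoint-convex function on an interval is convex. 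As the geodesic was arbitrary, $b_\gamma$ is geodesically convex. Finally, for the Cheeger-Gromoll function $b_{x_0}=\sup_\gamma b_\gamma$ with the supremum over rays from $x_0$, each $b_\gamma$ is convex along a fixed geodesic and bounded above there by $d(x_0,\cdot)$, and a pointwise supremum of geodesically convex real-valued functions is again geodesically convex, so $b_{x_0}$ is convex. I do not foresee any genuine obstacle beyond the limiting computation; the one point that must be handled with care is the orientation of the two inequalities — the lower bound on $d(\gamma_t,\cdot)$ and the convexity of $s\mapsto s^p$ — so that they chain together in the correct direction.
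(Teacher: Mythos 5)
Your proof is correct, and it takes a genuinely different algebraic route than the paper. The paper works with the quantity $t-\frac{d(z,\gamma_t)^p}{t^{p-1}}$: it divides the $p$-uniform smoothness inequality by $t^{p-1}$, notes that the error term $\frac{C}{4}\frac{d(x,y)^p}{t^{p-1}}\to 0$ because $p>1$, and invokes Lemma \ref{lem:fraction-busemann} to identify $\lim_t \bigl(t-\frac{d(z,\gamma_t)^p}{t^{p-1}}\bigr)$ with (a multiple of) $b_\gamma(z)$. You instead lower-bound $d(\gamma_t,\cdot)$ by $t-b_\gamma(\cdot)$ using only monotonicity of $s\mapsto s-d(\gamma_s,\cdot)$, apply convexity of $s\mapsto s^p$ to absorb the two apex terms into a single $\bigl(t-\frac{a+b}{2}\bigr)^p$, and then handle the constant perturbation $K$ with an explicit $p$-th-root estimate $s-(s^p-K)^{1/p}=O(s^{1-p})\to 0$. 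Both hinges are the same --- the defect term is a fixed constant while the comparison grows like $t^p$, so taking either quotients or $p$-th roots kills it in the limit, and $p>1$ is the essential hypothesis --- but your version is more self-contained (it avoids Lemma \ref{lem:fraction-busemann}) and makes the role of $p>1$ more transparent. One small bonus of your route: the paper's identity $\lim_t\bigl(t-\frac{d(z,\gamma_t)^p}{t^{p-1}}\bigr)=b_\gamma(z)$ actually evaluates to $p\,b_\gamma(z)$ (the factor $p$ drops out since it multiplies every term in the inequality, so the conclusion is unaffected), whereas your computation has no such hidden constant. Your final upgrade from midpoint convexity to geodesic convexity via $1$-Lipschitz continuity is the standard argument and is what the paper implicitly uses as well, and your closing remark that $b_{x_0}$ is a finite (because dominated by $d(x_0,\cdot)$) pointwise supremum of convex functions is exactly the paper's concluding line.
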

\begin{proof}
Fix distinct point $x,y\in X$ and assume $x,y\notin\gamma([t,\infty))$.
Then
\[
t-\frac{d(m,\gamma_{t})^{p}}{t^{p-1}}\le\frac{1}{2}\left(t-\frac{d(x,\gamma_{t})^{p}}{t^{p-1}}\right)+\frac{1}{2}\left(t-\frac{d(y,\gamma_{t})^{p}}{t^{p-1}}\right)-\frac{C}{4}\frac{d(x,y)^{p}}{t^{p-1}}.
\]
Note that the limit of the rightmost term converges to $0$ as $t\to\infty$
and by Lemma \ref{lem:fraction-busemann} 
\[
\lim_{t\to\infty}t-\frac{d(z,\gamma_{t})^{p}}{t^{p-1}}=\lim_{t\to\infty}t-d(z,\gamma_{t})=b_{\gamma}(z)
\]
for any $z\in X$. Combining these gives 
\[
b_{\gamma}(m)\le\frac{1}{2}b_{\gamma}(x)+\frac{1}{2}b_{\gamma}(y).
\]

Since the Cheeger-Gromoll functions are suprema of convex functions
they are convex as well.
\end{proof}
\begin{rem*}
By the same arguments the functions 
\begin{eqnarray*}
\tilde{b}_{x_{0}}(x) & = & \limsup_{t\to\infty}\sup_{y\in\partial B_{t}(x_{0})}t-d(x,y)\\
\hat{b}_{x_{0}}(x) & = & \limsup_{y_{n}\to\infty}d(x_{0},y_{n})-d(x,y_{n})
\end{eqnarray*}
are both quasi-convex or resp. both convex.
\end{rem*}
\begin{cor}
Assume $(X,d)$ is locally compact and $p$-uniformly smooth and geodesics
in $(X,d)$ can be extended locally. Then for any embedding line $\gamma:\mathbb{R}\to X$
\[
b_{\gamma\text{\textsuperscript{+}}}+b_{\gamma^{-}}=0.
\]
In particular, $b_{\gamma^{+}}$ is affine. If, in addition, $(X,d)$
is non-branching then $X$ is homeomorphic to $b_{\gamma^{+}}^{-1}(0)\times\mathbb{R}$.
\end{cor}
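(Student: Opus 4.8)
The plan is to establish the three assertions in succession. Set $f:=b_{\gamma^{+}}+b_{\gamma^{-}}$. By Proposition~\ref{prop:convex-Busemann} each summand is convex, hence so is $f$, and by the elementary inequality $b_{\gamma^{+}}+b_{\gamma^{-}}\le 0$ valid for any line we have $f\le 0$ on $X$. At the point $p:=\gamma(0)$ one computes $b_{\gamma^{\pm}}(p)=\lim_{t\to\infty}\bigl(t-d(\gamma(\pm t),p)\bigr)=0$, so $f(p)=0$ is the global maximum of $f$. The crucial observation is that $f$ is convex not only along geodesics but along \emph{local} geodesics as well: a curve that is piecewise a constant-speed geodesic makes $f$ convex on each such piece, hence convex on its whole parameter interval. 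Now fix $y\in X$, choose a geodesic $\sigma:[0,1]\to X$ from $p$ to $y$, and use the hypothesis that geodesics extend locally to obtain a local geodesic $\bar\sigma:[-\epsilon,1]\to X$ with $\epsilon>0$ and $\bar\sigma|_{[0,1]}=\sigma$. Then $g:=f\circ\bar\sigma$ is convex on $[-\epsilon,1]$ with $g\le 0$ and $g(0)=0$; writing $0=\lambda(-\epsilon)+(1-\lambda)\cdot 1$ with $\lambda=\frac{1}{1+\epsilon}$, convexity forces $0=g(0)\le\lambda g(-\epsilon)+(1-\lambda)g(1)\le 0$, so $g(1)=f(y)=0$. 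Since $y$ was arbitrary, $f\equiv 0$. As $b_{\gamma^{+}}=-b_{\gamma^{-}}$ is then both convex and concave, it is affine, and so is $b_{\gamma^{-}}$.

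For the splitting assume in addition that $(X,d)$ is non-branching and write $b:=b_{\gamma^{+}}$. For $x\in X$, properness yields rays $\eta_x^{\pm}$ from $x$ asymptotic to $\gamma^{\pm}$, and the asymptotic-ray identity together with $b=-b_{\gamma^{-}}$ gives $b(\eta_x^{+}(t))=b(x)+t$ and $b(\eta_x^{-}(t))=b(x)-t$. Concatenating $\eta_x^{-}$ and $\eta_x^{+}$ at $x$ produces a curve $\eta_x:\mathbb{R}\to X$ along which $b$ is affine of slope one; the ray property (for parameters of the same sign) and the triangle inequality (for opposite signs) give $d(\eta_x(s),\eta_x(t))\le|s-t|$, while $1$-Lipschitz continuity of $b$ gives $|s-t|=|b(\eta_x(s))-b(\eta_x(t))|\le d(\eta_x(s),\eta_x(t))$, so $\eta_x$ is a line. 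The non-branching argument from the lemma following Lemma~\ref{lem:bi-asym-parallel} then shows $\eta_x$ is the \emph{unique} line through $x$ along which $b$ is affine. Define $\Psi:X\to b^{-1}(0)\times\mathbb{R}$ by $\Psi(x)=\bigl(\eta_x(-b(x)),\,b(x)\bigr)$; one checks that $\Psi$ is a bijection with inverse $(q,c)\mapsto\eta_q(c)$, and that continuity of both maps reduces, via Arzel\`a--Ascoli and the uniqueness just proved, to the fact that $x_n\to x$ implies $\eta_{x_n}\to\eta_x$ uniformly on compact sets --- any subsequential limit is a line through $x$ along which $b$ is affine of slope one (here continuity of $b$ is used), hence equals $\eta_x$. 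This is precisely the bookkeeping already carried out in the proof of Theorem~\ref{thm:Splitting}, the only difference being that now $b_{\gamma^{+}}$ is genuinely affine rather than merely $1$-Lipschitz.

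I expect the main obstacle to be the first step, namely passing from ``$f$ convex, $f\le 0$, $f(p)=0$'' to ``$f\equiv 0$''. This is exactly where local extendability of geodesics is indispensable: a convex function on a geodesic space need not be constant just because it attains its maximum, and one really must be able to run geodesics \emph{through} the maximum point. Once the sum is known to vanish, affineness is immediate, and the homeomorphism follows by repeating, essentially verbatim, the argument used for Theorem~\ref{thm:Splitting}.
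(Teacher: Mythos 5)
There is a genuine gap in the first step. You claim that if $f$ is convex along every geodesic, then $f\circ\bar\sigma$ is automatically convex on $[-\epsilon,1]$ because $\bar\sigma$ is piecewise geodesic, and conclude $g(1)=0$ by applying the convexity inequality across $t=0$. The inference ``convex on each piece, hence convex on the whole interval'' is false: a function can be convex on $[-\epsilon,0]$ and on $[0,1]$ yet not on $[-\epsilon,1]$ (take $g(t)=-|t|$ to see the problem at the junction). A local geodesic only guarantees that around each parameter value there is a subinterval on which the curve is a genuine geodesic; there is no reason the two one-sided slopes of $g$ at $t=0$ should satisfy the inequality needed for convexity across the junction. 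So the chord inequality $g(0)\le\lambda g(-\epsilon)+(1-\lambda)g(1)$ is exactly the step that is not justified.

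The paper's proof avoids this by never using convexity across the junction. It instead picks $a$ so that the restriction of the extension to $[a,1+\epsilon]$ is a \emph{genuine} geodesic with the point $\gamma(0)$ (where $f=0$) in its interior; convexity of $f$ on this single geodesic segment, together with $f\le0$, forces $f$ to vanish there, in particular at $\tilde\eta(a)$. Then a second application of convexity, now on the original geodesic $[0,1]$, whose value is $0$ at the interior point $a$, forces $f\equiv0$ on $[0,1]$. Your argument can be repaired along exactly these lines: replace the single chord inequality on $[-\epsilon,1]$ by two separate applications of convexity on honest geodesic subintervals. The rest of your proof, concerning affineness, the construction of parallel lines via asymptotic rays, uniqueness by non-branching, and the homeomorphism, agrees with the paper up to bookkeeping and is fine.
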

\begin{proof}
Assume $\eta:[0,1]\to X$ is a geodesic and set $f=b_{\gamma^{+}}+b_{\gamma^{-}}$.
Convexity implies that $t\mapsto f(\eta(t))$ achieves its maximum
at $0$ or $1$ if it is not constant. So assume $f(x)<0$ for some
$x$. Then there is a geodesic $\eta:[0,1]\to X$ connecting $x$
and $x_{0}=\gamma(0)$. From the assumption we can extend $\eta$
beyond $x_{0}$ such that $\tilde{\eta}:[0,1+\epsilon]\to X$ is a
local geodesics agreeing with $\eta$ on $[0,1]$. Let $a\in[0,1)$
such that $\tilde{\eta}_{|[a,1+\epsilon]}$ is a geodesic. Then 
\[
\max_{t\in[a,1+\epsilon]}f(\tilde{\eta}(t))=f(\eta(1))=0.
\]
However, this implies that $f(\tilde{\eta})$ is constant on $[a,1+\epsilon]$.
But then $f(\eta)$ also attains its maximum at $t=a$ implying that
$f(\eta(t))=0$ for all $t\in[0,1]$. 

The equality shows that we may glue the asymptotic rays. Non-branching
implies that this can be done for at most one pair of rays. Thus for
every $x\in X$ there are unique line $\eta_{x}$ parallel to $\gamma$
such that $x\in\eta_{x}$ and $\eta_{x}(b_{\gamma\text{\textsuperscript{+}}}(x))=x$.
Note also that $\eta_{\eta(t)}=\eta_{\eta(s)}$ for $s,t\in\mathbb{R}$. 

Let $\Xi=\{\eta_{x}\,|\,x\in X\}\subset\operatorname{Lip}(\mathbb{R},X)$.
By local compactness we can show that $\Psi:\Xi\to b_{\gamma^{+}}^{-1}(0)$
defined by $\Psi(\eta_{x})=\eta_{x}(0)$ is a homeomorphism. In particular,
the assignment $x\mapsto(\eta_{x}(0),b_{\gamma^{+}}(x))$ is a homeomorphism
between $X$ and $b_{\gamma^{+}}^{-1}(0)\times\mathbb{R}$.
\end{proof}
\begin{rem*}
Without local extendibility the result might be wrong. Indeed, if
$X$ is the product of a filled triangle and the real line then there
exist two convex functions such that their sum is non-positive and
somewhere negative, and they sum up to zero at the line formed by
a vertex of the triangle. However, if we assume that $(X,d)$ is Busemann
concave then one can use the fact that 
\[
b_{\gamma^{\pm}}(\eta(0))=b_{\eta^{\mp}}(\gamma(0))
\]
to show affinity of the Busemann function. We leave the details to
the interested reader.
\end{rem*}
The following is an analogue of the case of standard Busemann functions.
The result also holds for $\tilde{b}_{x_{0}}$ and $\hat{b}_{x_{0}}$. 
\begin{lem}
\label{lem:Cheeger-Gromoll-ray}Assume $(X,d)$ is locally compact
and uniformly smooth. Then for any $x\in X$ there is a ray $\gamma_{x}:[0,\infty)\to X$
emanating from $x$ such that 
\[
b_{x_{0}}(\gamma_{x}(t))=b_{x_{0}}(x)+t.
\]
\end{lem}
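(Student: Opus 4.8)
The plan is to mimic the classical construction of the Cheeger--Gromoll ray. Recall that $b_{x_0}(x) = \sup_\gamma b_\gamma(x)$ over rays $\gamma$ emanating from $x_0$. First I would observe the elementary monotonicity fact that for any point $z$ and any ray $\gamma$ from $x_0$, moving along $\gamma$ increases $b_\gamma$ at unit speed, hence $b_{x_0}(\gamma(t)) \ge b_\gamma(\gamma(t)) = b_\gamma(x_0) + t$; taking the supremum over $\gamma$ gives $b_{x_0}(\gamma(t)) \ge \sup_\gamma(b_\gamma(x_0)) + t$. More usefully, one always has the Lipschitz bound $|b_{x_0}(x) - b_{x_0}(y)| \le d(x,y)$ (each $b_\gamma$ is $1$-Lipschitz and a sup of $1$-Lipschitz functions is $1$-Lipschitz), so it suffices to produce a ray $\gamma_x$ from $x$ along which $b_{x_0}$ grows at least at unit speed; the reverse inequality is automatic.

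The key step is the following: choose, for each large $T$, a ray $\sigma_T$ from $x_0$ with $b_{\sigma_T}(x) \ge b_{x_0}(x) - \tfrac1T$, and then choose a point $p_T$ far out on $\sigma_T$, say $p_T = \sigma_T(R_T)$ with $R_T \to \infty$, so that $R_T - d(x, p_T)$ is close to $b_{\sigma_T}(x)$. Let $\tau_T$ be a unit-speed geodesic from $x$ to $p_T$. By local compactness and a diagonal/Arzel\`a--Ascoli argument (exactly as in the standard construction of asymptotic rays sketched in the Preliminaries, using that $(X,d)$ is proper), a subsequence of the $\tau_T$ converges uniformly on compact sets to a ray $\gamma_x$ from $x$. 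I then need to verify $b_{x_0}(\gamma_x(t)) \ge b_{x_0}(x) + t$. Along $\tau_T$ one has $b_{\sigma_T}(\tau_T(t)) \ge t + (R_T - d(x,p_T)) - \text{(small error)}$, because $b_{\sigma_T}(\tau_T(t)) \ge R_T - d(\tau_T(t), p_T) = R_T - (d(x,p_T) - t) = t + (R_T - d(x,p_T))$ for $t \le d(x,p_T)$. Passing to the limit and using that $b_{x_0} \ge b_{\sigma_T}$ together with $R_T - d(x,p_T) \to b_{x_0}(x)$ (after refining the choice of $p_T$), and continuity of $b_{x_0}$, gives $b_{x_0}(\gamma_x(t)) \ge b_{x_0}(x) + t$. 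Combined with the universal $1$-Lipschitz upper bound this forces equality.

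The main obstacle I anticipate is a subtle point of uniformity: the ray $\sigma_T$ achieving the supremum up to $\tfrac1T$ depends on $T$, so one must be careful that the inequality $b_{\sigma_T}(\tau_T(t)) \ge t + b_{x_0}(x) - o(1)$ survives the limit even though the witnessing rays vary. This is handled by noting that only the \emph{lower} bound $b_{x_0} \ge b_{\sigma_T}$ is used (valid for every $T$ simultaneously) and the value $b_{\sigma_T}(\tau_T(t))$ is estimated purely in terms of the geometry of the geodesic $\tau_T$ and its endpoint $p_T$, which do converge. Here I would use uniform smoothness only indirectly --- it is what guarantees via the preceding Proposition that $b_{x_0}$ is quasi-convex and in particular a well-defined continuous function, though in fact the argument above needs nothing beyond properness and the definition of $b_{x_0}$; uniform smoothness is the hypothesis under which the statement is placed so that the companion facts (quasi-convexity) are available, but I would remark that the ray-existence itself is a soft compactness statement. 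Finally I would note that the same proof applies verbatim to $\tilde b_{x_0}$ and $\hat b_{x_0}$, since both are also $1$-Lipschitz and dominate the relevant $R_T - d(\cdot, p_T)$ quantities.
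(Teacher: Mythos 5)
Your proof is correct and follows the same overall outline as the paper's: take a maximizing sequence of rays from $x_0$ for $b_{x_0}(x)$, use local compactness (properness) to extract a limiting ray emanating from $x$, verify that $b_{x_0}$ grows at unit speed along it, and close with the $1$-Lipschitz upper bound. Where you diverge is in how the growth estimate survives the limit. The paper takes $\gamma_x^n$ to be the ray asymptotic to $\gamma_{x_0}^n$ emanating from $x$, invokes that $b_{\gamma_{x_0}^n}$ is affine along $\gamma_x^n$, and then writes $\lim_n b_{\gamma_{x_0}^n}(\gamma_x^n(t)) = b_{\gamma_{x_0}}(\gamma_x(t))$ --- a joint-convergence statement about Busemann functions associated to varying rays, evaluated at varying points, which is not completely immediate. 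You sidestep that issue: you build $\gamma_x$ directly as a limit of geodesics $\tau_T$ from $x$ to far-off points $p_T=\sigma_T(R_T)$ on the maximizing rays, and lower-bound $b_{x_0}(\tau_T(t))$ only via the monotone partial approximant $R_T - d(\cdot,p_T)\le b_{\sigma_T}\le b_{x_0}$, so you never need to control the limit of the $b_{\sigma_T}$ themselves; continuity of the single fixed $1$-Lipschitz function $b_{x_0}$ is all that enters at the end. That is a more elementary and more robust handling of the same limiting construction. Your closing remark that uniform smoothness plays no role here --- the lemma is a soft compactness fact valid in any proper geodesic space --- is also accurate, and matches what the paper actually uses in its proof.
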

\begin{proof}
This is true for $b_{\gamma_{x_{0}}}$ where $\gamma_{x_{0}}$ is
a ray emanating from $x_{0}$. From the definition there is a sequence
$(\gamma_{x_{0}}^{n})$ of rays emanating from $x_{0}$ such that
$b_{x_{0}}(x)=\lim_{n\to\infty}b_{\gamma_{x_{0}}^{n}}(x)$. Let $\gamma_{x}^{n}$
be the rays emanating from $x$ with $b_{\gamma_{x_{0}}^{n}}(\gamma_{x}^{n}(t))=b_{\gamma_{x_{0}}^{n}}(x)+t$.
By local compactness we can assume $\gamma_{x_{0}}^{n}\to\gamma_{x_{0}}$
and $\gamma_{x}^{n}\to\gamma_{x}$. Thus 
\begin{eqnarray*}
b_{x_{0}}(x)+t & = & \lim_{n\to\infty}b_{\gamma_{x_{0}}^{n}}(x)+t\\
 & = & \lim_{n\to\infty}b_{\gamma_{x_{0}}^{n}}(\gamma_{x}^{n}(t))\\
 & = & b_{\gamma_{x_{0}}}(\gamma_{x}(t))\le b_{x_{0}}(\gamma_{x}(t)).
\end{eqnarray*}
However, $b_{x_{0}}$ is $1$-Lipschitz implying $|b_{x_{0}}(\gamma_{x}(t))-b_{x_{0}}(x)|\le t$
and thus equality above.
\end{proof}
\begin{thm}
Any uniformly smooth locally compact metric space $(X,d)$ admits
a quasi-convex exhaustion function $b:X\to\mathbb{R}$ with compact
sublevels such that $S=b^{-1}(\min b)$ has empty interior.
\end{thm}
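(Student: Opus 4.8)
The plan is to take $b$ to be a Cheeger--Gromoll function. Fix a basepoint $x_{0}\in X$ and set $b:=b_{x_{0}}$; we may assume $X$ is unbounded (the case of interest --- if $X$ is bounded it is, by properness, compact). A complete locally compact geodesic space is proper, and an unbounded proper space carries a ray issuing from $x_{0}$, so $b$ is well defined; since every ray $\gamma$ from $x_{0}$ satisfies $-d(\cdot,x_{0})\le b_{\gamma}\le d(\cdot,x_{0})$ and $b_{\gamma}(x_{0})=0$, the function $b$ is finite, $1$-Lipschitz, hence continuous, with $b(x_{0})=0$. By the Proposition asserting that Busemann functions --- hence Cheeger--Gromoll functions --- of a uniformly smooth space are quasi-convex, $b$ is quasi-convex, so each sublevel $L_{c}:=b^{-1}((-\infty,c])$ is closed and convex, i.e. contains every geodesic joining two of its points. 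I will show that every $L_{c}$ is compact, which also makes $b$ an exhaustion function since $X=\bigcup_{c}L_{c}$, and that $S=b^{-1}(\min b)$ has empty interior.

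To see that $L_{c}$ is compact I may assume $c\ge0$, since $L_{c}\subseteq L_{0}$ when $c<0$. Being closed in a proper space, $L_{c}$ is compact as soon as it is bounded. If $L_{c}$ were unbounded, choose $z_{i}\in L_{c}$ with $\ell_{i}:=d(x_{0},z_{i})\to\infty$. Since $b(x_{0})=0\le c$ we have $x_{0}\in L_{c}$, so convexity of $L_{c}$ forces a unit-speed geodesic $\sigma_{i}\colon[0,\ell_{i}]\to X$ from $x_{0}$ to $z_{i}$ to lie entirely in $L_{c}$. The curves $\sigma_{i}$ are $1$-Lipschitz and, by properness, pointwise precompact, so after passing to a subsequence they converge uniformly on compact sets to a curve $\gamma\colon[0,\infty)\to X$ with $\gamma(0)=x_{0}$; letting $i\to\infty$ in $d(\sigma_{i}(s),\sigma_{i}(t))=|s-t|$ shows that $\gamma$ is an isometric embedding, i.e. a ray issuing from $x_{0}$, and $\gamma([0,\infty))\subseteq L_{c}$ because $L_{c}$ is closed. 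But $\gamma$ is one of the rays over which the supremum defining $b$ is formed, so $b(\gamma(t))\ge b_{\gamma}(\gamma(t))=t$ for all $t\ge0$, whereas $\gamma(t)\in L_{c}$ gives $b(\gamma(t))\le c$; letting $t\to\infty$ is absurd. Hence $L_{c}$ is bounded and therefore compact. In particular $L_{0}$ is a nonempty compact set on which $b$ attains its global minimum $\mu:=\min b\le0$.

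Finally, suppose for contradiction that $S=b^{-1}(\mu)$ contains a ball $B_{r}(y)$. By Lemma~\ref{lem:Cheeger-Gromoll-ray} there is a ray $\gamma_{y}\colon[0,\infty)\to X$ emanating from $y$ with $b(\gamma_{y}(t))=\mu+t$. For $0<t<r$ we have $\gamma_{y}(t)\in B_{r}(y)\subseteq S$, so $b(\gamma_{y}(t))=\mu$, contradicting $\mu+t>\mu$. Hence $S$ has empty interior, and $b=b_{x_{0}}$ has all the required properties.

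Most of this is routine; the step deserving care is the compactness of the sublevels, and there the only subtlety is to confirm that the Arzel\`a--Ascoli limit of the escaping geodesics is a genuinely minimizing ray that remains inside the closed convex sublevel --- once this is granted, the trivial relations $b_{x_{0}}\ge b_{\gamma}$ and $b_{\gamma}(\gamma(t))=t$ close the argument. All the input from uniform smoothness is absorbed into the quasi-convexity of $b_{x_{0}}$ and into Lemma~\ref{lem:Cheeger-Gromoll-ray}, both already available.
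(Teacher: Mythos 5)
Your proof is correct and follows essentially the same route as the paper's: take the Cheeger--Gromoll function $b_{x_0}$, invoke the quasi-convexity proposition, prove compactness of the sublevels by running an Arzel\`a--Ascoli/ray argument against the defining supremum of $b_{x_0}$, and derive emptiness of the interior of $S$ from Lemma~\ref{lem:Cheeger-Gromoll-ray}. The only organizational difference is that the paper's own proof of the theorem records only the empty-interior step and defers the sublevel compactness to Lemma~\ref{lem:bdry-dist} (stated a little further on), whereas you fold that same argument directly into your proof, which makes it self-contained.
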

\begin{proof}
Let $b=b_{x_{0}}$ for some $x_{0}\in X$. The fact that $S_{x_{0}}$
has empty interior follows from the lemma above. Indeed, let $x\in S_{x_{0}}$.
There is a ray $\gamma_{x}$ emanating from $x$ with $b(\gamma_{x}(t))=b(x)+t$.
Assume $\gamma_{x}(t)\in S_{x_{0}}$ then 
\[
\min b=b(\gamma_{x}(t))=b(x)+t=\min b+t
\]
which can only hold if $t=0$. Therefore, $\interior S_{x}=\varnothing$.
\end{proof}
We call $S_{x_{0}}$ a \emph{weak soul} as there is no way to dissect
it further without an intrinsic notion of boundary (see also below).
\begin{cor}
\label{cor:point-soul}If, in addition, $b$ is strictly quasi-convex
then $S$ is a single point.
\end{cor}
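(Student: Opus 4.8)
The plan is to combine the strict quasi-convexity of $b=b_{x_0}$ with the ray-lifting property established in Lemma~\ref{lem:Cheeger-Gromoll-ray}, together with the fact that $b$ has compact sublevels so that $\min b$ is attained and $S=b^{-1}(\min b)$ is non-empty. Suppose for contradiction that $S$ contained two distinct points $x$ and $y$. Since $S$ is a sublevel set of the quasi-convex function $b$, it is totally geodesic, so any geodesic $\gamma:[0,1]\to X$ from $x$ to $y$ stays in $S$, i.e. $b$ is constant (equal to $\min b$) along $\gamma$. But strict quasi-convexity of $b$ says that whenever $\gamma(0)\ne\gamma(1)$ one has $b(\gamma(t))<\max\{b(\gamma(0)),b(\gamma(1))\}=\min b$ for $t\in(0,1)$, which is impossible since $\min b$ is the minimum. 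Hence $S$ has at most one point, and since $S\ne\varnothing$ it is a single point.

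Concretely I would carry out the steps in this order. First, note that $b$ is an exhaustion function with compact sublevels (from the theorem just proved), so $\min b$ exists and $S=b^{-1}(\min b)$ is a non-empty compact set. Second, recall from the discussion in the subsection on convexity that the sublevel sets of a quasi-convex function are totally geodesic; in particular $S$ is convex, so for any $x,y\in S$ every geodesic between them lies entirely in $S$. Third, invoke strict quasi-convexity directly: if $x\ne y$ lie in $S$ and $\gamma$ is a geodesic joining them, then $b(\gamma(1/2))<\min b$, contradicting that $\min b$ is the global minimum of $b$. Therefore $|S|\le 1$, and combined with $S\ne\varnothing$ we get that $S$ is exactly one point.

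There is essentially no real obstacle here — the corollary is a short deduction from the theorem it follows and the definition of strict quasi-convexity. The only point requiring a sentence of care is ensuring that $S$ is non-empty, which is where one uses that the sublevels are compact (a proper geodesic space is assumed throughout, and $b$ is shown to be an exhaustion function), so that the continuous function $b$ attains its infimum. One could also phrase the argument without explicitly passing through ``totally geodesic sublevels'' by simply applying the inequality $b(\gamma(t))\le\max\{b(\gamma(0)),b(\gamma(1))\}$ with strict inequality to two putative distinct minima, but appealing to the total geodesicity of sublevels makes the logic transparent.
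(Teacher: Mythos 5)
Your argument is correct and matches the paper's own proof: both take two putative distinct points of $S$, apply strict quasi-convexity along a connecting geodesic to get $b(\gamma(1/2))<\min b$, and conclude by contradiction. The extra remarks you add (compact sublevels ensuring $S\ne\varnothing$, total geodesicity of $S$) are harmless elaborations of what the paper leaves implicit.
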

\begin{proof}
Let $x,y\in S_{x_{0}}$ be two point and $\gamma:[0,1]\to S_{x_{0}}$
be a geodesic connecting $x$ and $y$. By strict quasi-convexity
we have $b(\gamma_{t})\le\max\{b(x),b(y)\}$ with strict inequality
if $x\ne y$. But that case cannot happen because $b(x),b(y)=\min b$.
\end{proof}
Strict quasi-convexity of $b$ implies that the sublevel sets are
strictly convex. For Alexandrov spaces we can show the converse.
\begin{prop}
\label{prop:Alex-str-conv}Assume $(M,d)$ is an Alexandrov space
of non-negative curvature. Then $C_{s}$ is strictly convex (for some
$s>\min b$) if and only if $b$ is strictly quasi-convex on $C_{s}$. 
\end{prop}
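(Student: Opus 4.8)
The plan is to prove the two implications separately; the forward direction is soft and essentially general, and the converse is where the lower curvature bound does all the work. For the forward direction, suppose $b$ is strictly quasi-convex on $C_s$. Since $(M,d)$ has curvature $\ge 0$ it is $2$-uniformly smooth with constant $C=1$ (the Alexandrov point-to-side comparison applied at the midpoint is exactly the defining inequality), so by Proposition \ref{prop:convex-Busemann} the function $b$ is convex; in particular $C_s$ is convex and every geodesic with endpoints in $C_s$ stays in $C_s$. If such a geodesic $\gamma$ is non-constant, strict quasi-convexity gives $b(\gamma(t))<\max\{b(\gamma(0)),b(\gamma(1))\}\le s$ for $t\in(0,1)$, and since $b$ is continuous $\{b<s\}\subseteq\interior C_s$, so $\gamma(t)\in\interior C_s$. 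Thus $C_s$ is strictly convex, and this step uses nothing about the curvature.

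For the converse, fix $s>\min b$, assume $C_s$ is strictly convex, and suppose toward a contradiction that $b$ is not strictly quasi-convex on $C_s$. Then there is a non-constant geodesic $\sigma\colon[0,L]\to C_s$ and a $t_0\in(0,L)$ with $b(\sigma(t_0))\ge\max\{b(\sigma(0)),b(\sigma(L))\}$; combined with convexity of $b$ this forces $b\circ\sigma$ to meet its affine interpolation at an interior point, hence $b\circ\sigma$ is affine with equal endpoint values, i.e.\ $b|_\sigma\equiv c_0$ for some $c_0\le s$. If $c_0=s$ then $\sigma\subseteq\partial C_s$, contradicting strict convexity of $C_s$ outright, so we may assume $c_0<s$. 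Now I would propagate this flat segment upward: using Lemma \ref{lem:Cheeger-Gromoll-ray}, through each $\sigma(u)$ pick a ray $\gamma_{\sigma(u)}$ along which $b$ increases at unit speed, and set $F(u,t)=\gamma_{\sigma(u)}(t)$ for $t\in[0,s-c_0]$, so that $b(F(u,t))=c_0+t$. The goal is to show that for each fixed $t$ the curve $u\mapsto F(u,t)$ is, up to reparametrization, a non-constant geodesic; evaluating at $t=s-c_0$ then yields a non-constant geodesic contained in $\partial C_s=\{b=s\}$ with endpoints in $C_s$, contradicting strict convexity of $C_s$ and finishing the proof.

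The hard part is precisely this rigidity, that $F$ sweeps out a ``flat rectangle''. Here both curvature bounds enter: convexity of $b$ keeps $b$ below every affine interpolation along the geodesics of the configuration, while the lower bound makes $(M,d)$ Busemann concave (an Alexandrov space of non-negative curvature has Busemann curvature bounded below by $0$) and, together with the facts that $b$ is $1$-Lipschitz and increases at unit speed along each $\gamma_{\sigma(u)}$, forces the relevant comparison triangles to be Euclidean and the angle between $\sigma$ and each ray to be exactly $\frac{\pi}{2}$. Concretely I would first establish $d(\gamma_{\sigma(0)}(t),\gamma_{\sigma(L)}(t))=L$ for all $t\in[0,s-c_0]$ --- the bound ``$\ge$'' by running the non-expanding gradient flow of the concave function $-b$ (whose unit-speed integral curves are the rays $\gamma_x$) backwards, pushing the level-$(c_0+t)$ configuration down onto $\sigma$ and using that this flow is distance non-increasing; the bound ``$\le$'' by feeding convexity of $b$ into the comparison inequality for the triangles $(\sigma(0),\sigma(L),\gamma_{\sigma(0)}(t))$ --- and then check that the curve through the points $F(u,t)$ realizes this distance, so it is a geodesic. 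The main obstacle I anticipate is the possible non-uniqueness of the rays $\gamma_x$ and of geodesics; handling it cleanly probably requires invoking the structure of the gradient flow of $b$ on non-negatively curved Alexandrov spaces (in particular that $b$ has no critical points and $|\nabla b|\equiv 1$ off the soul), or else proving the rigidity first for a single Busemann function $b_\gamma$ and then passing to the supremum $b=b_{x_0}$.
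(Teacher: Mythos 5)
Your forward direction matches the paper's (both dismiss it as immediate), and your reduction of the converse is correct: convexity of $b$ (from $2$-uniform smoothness of $CBB(0)$-spaces with $C=1$) turns failure of strict quasi-convexity on $C_s$ into a non-constant geodesic $\sigma\subset C_s$ on which $b$ is constant, and the goal is to promote this to a non-constant geodesic in $\partial C_s$, contradicting strict convexity. That is also where the paper lands: via Corollary~\ref{cor:CG-fcn-via-bdry} it rewrites $b|_{C_s}=s-b_{C_s}$ with $b_{C_s}(\cdot)=d(\cdot,\partial C_s)$, and then simply cites the rigidity theorem of Yamaguchi (\cite[Proposition 2.1]{Yamaguchi2012}): in a non-negatively curved Alexandrov space, if the distance-from-boundary function is constant along a non-constant geodesic, there is a non-constant geodesic in the boundary.

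The gap is that you try to prove this rigidity from scratch, and your sketch does not close. Three points are unjustified. First, you treat the rays $\gamma_x$ from Lemma~\ref{lem:Cheeger-Gromoll-ray} as the (backward) integral curves of the semi-contractive gradient flow of $-b$; in an Alexandrov space this presupposes $|\nabla b|\equiv 1$ off the soul, which is itself a nontrivial rigidity statement you have not established (you flag this yourself). Second, the upper bound $d(\gamma_{\sigma(0)}(t),\gamma_{\sigma(L)}(t))\le L$ is asserted to follow by ``feeding convexity of $b$ into the comparison inequality,'' but no mechanism is given that bounds the third side $d(\sigma(L),\gamma_{\sigma(0)}(t))$ or controls the triangle $(\sigma(0),\sigma(L),\gamma_{\sigma(0)}(t))$; convexity of $b$ alone does not constrain that distance. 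Third, even granting $d(\gamma_{\sigma(0)}(t),\gamma_{\sigma(L)}(t))=L$, concluding that $u\mapsto F(u,t)$ is a geodesic requires showing the curve has length exactly $L$, which you have not argued. These are precisely the points that Yamaguchi's proposition is cited to handle, and without it your converse direction remains a plausible plan rather than a proof.
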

\begin{rem*}
It is well possible that $b$ is not strictly quasi-convex outside
of $C_{s}$. An example is given as follows: glue the half cylinder
$[0,\infty)\times S^{n-1}$ and the lower hemisphere $S_{\frac{1}{2}}^{n}$
along their boundaries which are isometric to $S^{n-1}$. Then $b$
is up to a constant the distance from the south pole and is not strictly
(quasi-)convex outside of the hemisphere.
\end{rem*}
\begin{proof}
Obviously strict quasi-convexity of $b$ on $C_{s}$ implies strict
convexity of $C_{s}$. Assume instead $C_{s}$ is strictly convex.
We will use the rigidity of the distance from the boundary proven
by \cite{Yamaguchi2012}. From Corollary \ref{cor:CG-fcn-via-bdry}
below we have $(b)_{|C_{s}}=s-b_{C_{s}}$ where $b_{C_{s}}(x)=d(x,\partial C_{s})$.
If $b_{C_{s}}$ was not strictly quasi-convex then there is a non-constant
geodesic $\gamma$ in $C_{s}$ such that $b_{C_{s}}$ is constant
along $\gamma$. The rigidity result in \cite[Proposition 2.1]{Yamaguchi2012}
shows that there is a non-constant geodesic $\eta:[0,1]\to\partial C_{s}$
which is impossible by strict convexity of $C_{s}$. 
\end{proof}
The above actually gives a more general characterization: a closed
convex set $C$ in an Alexandrov space of non-negative curvature is
strictly convex iff $b_{C}(\cdot)=d(\cdot,\partial C)$ is strictly
quasi-convex on $C$. 

\subsection{An application of the technique in the smooth section}

In this section we apply the technique above in the smooth setting.
We show that if a Finsler manifold with non-negative flag curvature
has vanishing tangent curvature along a geodesic $\eta$ then any
Busemann function is convex along $\eta$. This can be used to simplify
the proof of orthogonality of certain tangent vectors in \cite{Lakzian2014}
and avoid a complicated Toponogov-like theorem proved in \cite{Kondo2012}.
In order to avoid a lengthy introduction, we refer the reader to \cite{Ohta2008}
for the notation used in this section. The focus will be on the proof
of uniform smoothness of the distance \cite[Theorem 4.2, Corollary 4.4]{Ohta2008}. 

Note that in the Finsler setting a (forward) geodesic refers to an
constant-speed auto-parallel curve $\gamma:[0,1]\to M$ such that
$d_{F}(\gamma_{0},\gamma_{1})=F(\dot{\gamma}_{0})$ where $d_{F}$
is the asymmetric metric induced by the Finsler structure $F$. Assuming
smoothness of $\gamma:[0,1]\to M$, this is equivalent to 
\[
d_{F}(\gamma_{t},\gamma_{s})=(s-t)d_{F}(\gamma_{0},\gamma_{1})
\]
for $1\ge s\ge t\ge0$.
\begin{lem}
Let $(M,F)$ be a connected forward geodesically complete $C^{\infty}$-Finsler
manifold. Assume $(M,F)$ has non-negative flag curvature and for
all $x\in M$ the norms $F_{x}$ are $2$-uniformly smooth for some
constant $S\ge1$. If $\eta:[0,1]\to M$ is a (forward) geodesic with
$\mathcal{T}=0$ on $T_{\eta}M$ then 
\[
d^{2}(x,\eta_{t})\ge(1-t)d^{2}(x,\eta_{0})+td^{2}(x,\eta_{1})-(1-t)tS^{2}d(\eta_{0},\eta_{1}).
\]
\end{lem}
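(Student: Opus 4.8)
The plan is to mimic the argument in Proposition \ref{prop:convex-Busemann} at the level of a single geodesic, replacing the ray $\gamma$ with the geodesic $\eta$ and the Busemann-function limit with the direct midpoint inequality coming from $2$-uniform smoothness of the Finsler distance. First I would recall from \cite[Theorem 4.2, Corollary 4.4]{Ohta2008} that the assumption of non-negative flag curvature together with $F_x$ being $2$-uniformly smooth with constant $S$ yields, for a geodesic $\eta$ along which the tangent curvature $\mathcal T$ vanishes, the infinitesimal convexity estimate for $r_x(\cdot)=d(x,\cdot)$; concretely, the second variation of $\frac12 d^2(x,\eta_t)$ is controlled from below by $|\dot\eta|_F^2$ up to the $S^2$-distortion, exactly as in Ohta's proof of uniform smoothness of the distance. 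So the heart of the matter is to upgrade this to the claimed finite, non-infinitesimal inequality along the full geodesic $\eta:[0,1]\to M$.

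The cleanest route is to avoid reproving Ohta's computation and instead apply the $p=2$ case of the $p$-uniform smoothness midpoint inequality ``moved along $\eta$''. Here is the key step I would carry out: fix $x$, and for $t\in[0,1]$ let $m$ be the midpoint of $\eta_a$ and $\eta_c$ where $a<t<c$; since $\eta$ is a geodesic with $\mathcal T=0$ along it, the midpoint of $\eta_a,\eta_c$ is $\eta_{(a+c)/2}$ (this uses that $\mathcal T=0$ on $T_\eta M$ forces the geodesic to behave, in the relevant second-order sense, like a Riemannian one — this is precisely what Ohta's hypothesis buys). Applying the defining inequality of $2$-uniform smoothness to the triple $(x,\eta_a,\eta_c)$ gives
\[
d^2(x,\eta_{(a+c)/2})\ge \tfrac12 d^2(x,\eta_a)+\tfrac12 d^2(x,\eta_c)-\tfrac{S^2}{4}d(\eta_a,\eta_c).
\]
Since $t\mapsto d^2(x,\eta_t)$ then satisfies the midpoint version of the $(1-t)t$-semiconcavity-from-below inequality with the linear-in-$t$ defect term $d(\eta_a,\eta_c)$ (not squared, because $F$ is only $2$-uniformly smooth, i.e. $p=2$), a standard iteration over dyadic rationals plus continuity of $t\mapsto d^2(x,\eta_t)$ promotes this to the full statement
\[
d^2(x,\eta_t)\ge (1-t)d^2(x,\eta_0)+t\,d^2(x,\eta_1)-(1-t)t\,S^2 d(\eta_0,\eta_1)
\]
for all $t\in[0,1]$. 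The dyadic-iteration lemma (that a continuous $f$ with $f(\tfrac{a+c}{2})\ge\tfrac12 f(a)+\tfrac12 f(c)-K|c-a|$ satisfies $f(t)\ge(1-t)f(0)+tf(1)-Kt(1-t)$ after rescaling $K$ appropriately) is routine and I would only indicate it; the bookkeeping constant works out because $\sum 2^{-k}\cdot(\text{length scale})$ telescopes against the $(1-t)t$ factor.

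The main obstacle I anticipate is the identification of the midpoint: in a general Finsler manifold the midpoint of $\eta_a$ and $\eta_c$ along the \emph{distance} $d_F$ need not be $\eta_{(a+c)/2}$, and indeed asymmetry makes ``midpoint'' delicate. This is exactly where the hypothesis $\mathcal T=0$ on $T_\eta M$ is essential, and I would spend the bulk of the write-up justifying that under this vanishing the geodesic $\eta$ is, to the order needed, minimizing between its own points symmetrically and that the $2$-uniform smoothness inequality can be applied with $m=\eta_{(a+c)/2}$ — appealing to the relevant structure in \cite{Ohta2008} (flag curvature $\ge 0$ controls the Jacobi fields, $\mathcal T=0$ kills the first-order asymmetric correction). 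A secondary technical point is making sure the inequality is applied at scale: one applies $2$-uniform smoothness to $(x,\eta_a,\eta_c)$ only when $\eta|_{[a,c]}$ is globally minimizing, which holds for $c-a$ small; the dyadic refinement only ever uses small sub-intervals, so this causes no trouble. Everything else — the limiting/continuity arguments and the constant chase — is bookkeeping of the same flavour as the proof of Proposition \ref{prop:convex-Busemann}.
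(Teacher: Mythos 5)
The central step of your proposal assumes exactly what has to be proven. You apply ``the defining inequality of $2$-uniform smoothness'' of the metric space to the triple $(x,\eta_a,\eta_c)$ with $m=\eta_{(a+c)/2}$, i.e.\ you take as given that $(M,d_F)$ is $2$-uniformly smooth with constant $S$. But that is precisely the content of \cite[Corollary~4.4]{Ohta2008}, and Ohta's corollary requires the tangent curvature $\mathcal{T}$ to vanish (equivalently, $M$ Berwald) or at least to be bounded below \emph{globally}, with the smoothness constant of $d_F$ degrading as the bound $\delta$ grows. The lemma you are proving assumes only $\mathcal{T}=0$ on $T_\eta M$ --- along a single geodesic --- and claims the inequality with the undegraded constant $S^2$ and only for distances to points of $\eta$. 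You cannot obtain this by quoting a global metric property that is not available under the hypotheses; you have to go into the proof of \cite[Theorem~4.2]{Ohta2008} and notice, as the paper does, that the hypothesis $\mathcal{T}\ge-\delta$ enters in exactly one estimate, namely $g_{T(r)}(D^T_U U,T)=g_{T(r)}(D^U_U U,T)-\mathcal{T}_{T(r)}(v)$ with $T(r)\in T_{\eta_0}M$, and that if $\mathcal{T}$ vanishes on $T_\eta M$ this term is zero so one may take $\delta=0$ there. That re-inspection of Ohta's second-variation computation is the whole proof; the midpoint inequality is its output, not an ingredient you can import.

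Two secondary misconceptions. First, your claim that $\eta_{(a+c)/2}$ being a metric midpoint of $\eta_a$, $\eta_c$ ``is precisely what $\mathcal{T}=0$ on $T_\eta M$ buys'' is wrong: on any minimizing (sub-)segment of a geodesic the geodesic midpoint is automatically a metric midpoint, regardless of the tangent curvature; $\mathcal{T}$ has nothing to do with this. The real obstacle you should have worried about is the one above. Second, once one has rerun Ohta's computation with $\delta=0$, \cite[Theorem~4.2]{Ohta2008} already yields the inequality for every $t\in[0,1]$ directly (the statement there is not a midpoint inequality but a full $t$-dependent one), so your dyadic-iteration paragraph is dead weight: it converts the conclusion into a weaker midpoint form and then recovers it, while silently assuming the conclusion to perform the first conversion.
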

\begin{proof}
We only sketch the argument as the calculations are exactly those
of \cite[Proof of Theorem 4.2]{Ohta2008}. The only time the assumption
$\mathcal{T}\ge-\delta$ is applied is for estimating 
\[
g_{T(r)}(D_{U}^{T}U,T)=g_{T(r)}(D_{U}^{U}U,T)-\mathcal{T}_{T(r)}(v)
\]
where $T(r)\in T_{\eta_{0}}M$ and $v=\frac{\dot{\eta}_{0}}{F(\dot{\eta}_{0})}$.
Since $D_{U}^{U}U(r)=0$ and $\mathcal{T}=0$ on $T_{\eta_{0}}M$
we see that $g_{T(r)}(D_{U}^{T}U,T)=0$. In particular, if it is possible
to choose $\delta=0$. Then following the calculation we obtain the
result via \cite[Corollary 4.4]{Ohta2008}.
\end{proof}
Along the lines of the proof of Proposition \ref{prop:convex-Busemann}
we immediately deduce.
\begin{cor}
Let $(M,F)$ be as above. Then any Busemann function associated to
a ray $\gamma:[0,\infty)\to M$ is convex along $\eta$. 
\end{cor}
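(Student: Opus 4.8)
The plan is to run, essentially verbatim, the limiting argument from the proof of Proposition \ref{prop:convex-Busemann}, with the global $2$-uniform smoothness estimate used there replaced by the inequality of the preceding Lemma. That inequality holds along the distinguished geodesic $\eta$, and also along every subsegment of $\eta$: a subsegment reparametrised on $[0,1]$ is again a geodesic whose tangent bundle is contained in $T_{\eta}M$, so it still satisfies $\mathcal{T}=0$ along it.

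First I would fix a ray $\gamma:[0,\infty)\to M$ and, for large $s$, apply the Lemma with $x=\gamma(s)$, obtaining for every $t\in[0,1]$
\[
d^{2}(\gamma(s),\eta_{t})\ge(1-t)\,d^{2}(\gamma(s),\eta_{0})+t\,d^{2}(\gamma(s),\eta_{1})-(1-t)t\,S^{2}d(\eta_{0},\eta_{1}).
\]
One point deserves care before going further: since $d_{F}$ is asymmetric, one must check that the distance $d(\gamma(s),\eta_{t})$ delivered by the Lemma is exactly the quantity entering $b_{\gamma}(\eta_{t})=\lim_{s\to\infty}s-d(\gamma(s),\eta_{t})$; as the Busemann function of $\gamma$ uses precisely the forward distance measured from $\gamma(s)$, there is no mismatch. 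Dividing through by $s$ and rearranging as in Proposition \ref{prop:convex-Busemann} (the case $p=2$), the inequality becomes
\[
s-\frac{d^{2}(\gamma(s),\eta_{t})}{s}\le(1-t)\Bigl(s-\frac{d^{2}(\gamma(s),\eta_{0})}{s}\Bigr)+t\Bigl(s-\frac{d^{2}(\gamma(s),\eta_{1})}{s}\Bigr)+\frac{(1-t)t\,S^{2}d(\eta_{0},\eta_{1})}{s}.
\]

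Next I would let $s\to\infty$. By Lemma \ref{lem:fraction-busemann}, each term of the form $s-d^{2}(\gamma(s),z)/s$ converges to $b_{\gamma}(z)$ (the computation being the one in the proof of Proposition \ref{prop:convex-Busemann}), while the remainder $(1-t)t\,S^{2}d(\eta_{0},\eta_{1})/s$ tends to $0$. Passing to the limit gives
\[
b_{\gamma}(\eta_{t})\le(1-t)\,b_{\gamma}(\eta_{0})+t\,b_{\gamma}(\eta_{1})
\]
for all $t\in[0,1]$, and applying the same bound to every subsegment of $\eta$ upgrades this from the endpoint inequality to genuine convexity of $t\mapsto b_{\gamma}(\eta_{t})$. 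Finally, since any Cheeger--Gromoll function $b_{x_{0}}$ is a supremum of Busemann functions, it too is convex along $\eta$.

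I do not expect a substantial obstacle: the argument is a direct transcription of Proposition \ref{prop:convex-Busemann}. The two things to watch are the asymmetry bookkeeping noted above and the (routine) remark that subsegments of $\eta$ inherit the hypothesis $\mathcal{T}=0$, which is what turns the one-parameter estimate of the Lemma into convexity of the restricted function rather than merely an endpoint bound.
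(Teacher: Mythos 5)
Your proposal is correct and follows essentially the same route the paper intends: the paper simply says the corollary follows ``along the lines of the proof of Proposition \ref{prop:convex-Busemann}'', which is exactly the limiting argument you carry out, with the Lemma's interpolation inequality replacing global $2$-uniform smoothness and a routine check that subsegments of $\eta$ inherit $\mathcal{T}=0$ (needed, since the endpoint estimate on $[0,1]$ alone does not give convexity). One cosmetic point, inherited from the statement of Proposition \ref{prop:convex-Busemann} itself: the auxiliary limit is in fact $\lim_{s\to\infty}\bigl(s-d^{2}(\gamma(s),z)/s\bigr)=2\,b_{\gamma}(z)$ rather than $b_{\gamma}(z)$, but since the factor $2$ appears on both sides of the inequality it cancels and the conclusion is unaffected.
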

The next result was proved in \cite[Lemma 4.8]{Lakzian2014} for closed
forward geodesics, i.e. a map $\eta:S^{1}\to M$ such that $\eta$
is a locally geodesic. Note that we do not need the reversibility
assumption of the closed geodesic \cite[Theorem 1.2]{Lakzian2014}.
The author wonders whether $\mathcal{T}=0$ on $T_{\eta}M$ would
imply that the reversed $\tilde{\eta}:t\mapsto\eta(1-t)$ is a geodesic
loop as well. This would be the case if $\mathcal{T}=0$ in a neighborhood
$U$ of $\eta$, i.e. $M$ is Berwaldian in $U$.
\begin{cor}
Let $(M,F)$ be as above and assume $\eta:[0,1]\to M$ is a (forward)
geodesic loop, i.e. $\eta$ is locally (forward) geodesic such that
$\eta_{0}=\eta_{1}$. Then for any ray $\gamma:[0,\infty)\to M$ with
$\gamma_{0}=\eta_{0}$ it holds 
\[
g_{\dot{\gamma}_{0}}(\dot{\gamma}_{0},\dot{\eta}_{0})\ge0\quad\mbox{and}\quad g_{\dot{\gamma}_{0}}(\dot{\gamma}_{0},\dot{\eta}_{1})\le0.
\]
In particular, if $\eta$ is a (forward) closed geodesic then $g_{\dot{\gamma}_{0}}(\dot{\gamma}_{0},\dot{\eta}_{0})=0$. 
\end{cor}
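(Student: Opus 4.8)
The plan is to apply the previous corollary twice, once to the forward geodesic $\eta$ and once to the curve traversed from $\eta_1 = \eta_0$ in the direction $\dot{\eta}_1$, and then extract the infinitesimal inequalities from the convexity of $b_\gamma$ along these curves. First I would note that since $\eta$ is a forward geodesic loop, the restriction $\eta|_{[0,t]}$ is an honest (forward) geodesic for each sufficiently small $t > 0$, and similarly a short terminal piece $\eta|_{[1-t,1]}$ is a geodesic; moreover $\mathcal{T} = 0$ on $T_\eta M$ is a condition along the whole loop, so the hypothesis of the preceding corollary is satisfied on each such short piece. Hence $b_\gamma$ is convex along $\eta$ near $t = 0$ and near $t = 1$, where $b_\gamma$ is the Busemann function of the given ray $\gamma$ with $\gamma_0 = \eta_0$.

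Next I would use the standard fact that for a ray $\gamma$ emanating from $p = \gamma_0$, the Busemann function $b_\gamma$ is differentiable along any geodesic leaving $p$ with one-sided derivative at $p$ given by $-g_{\dot\gamma_0}(\dot\gamma_0, v)/F(\dot\gamma_0)$ (up to normalization), or more precisely that $\frac{d}{dt}\big|_{t=0^+} b_\gamma(\sigma(t)) = -g_{\dot\gamma_0}(\dot\gamma_0,\dot\sigma_0)$ when $F(\dot\gamma_0)=1$, because $b_\gamma(x) \le F(\dot\gamma_0)\cdot(\text{something}) $ and near $p$ the function $t - d_F(\gamma_t, \cdot)$ is smooth with the expected gradient. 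Since $b_\gamma(\eta_0) = b_\gamma(\eta_1)$ (the loop returns to its basepoint) and $t \mapsto b_\gamma(\eta_t)$ is convex on $[0,\varepsilon]$ and on $[1-\varepsilon,1]$, convexity forces the right derivative at $t=0$ to be $\le$ the value-difference over the loop divided by length, and symmetrically at $t=1$; combined with the endpoint equality this yields $\frac{d}{dt}\big|_{0^+} b_\gamma(\eta_t) \le 0$ and $\frac{d}{dt}\big|_{1^-} b_\gamma(\eta_t) \ge 0$. Translating through the derivative formula gives $g_{\dot\gamma_0}(\dot\gamma_0,\dot\eta_0) \ge 0$ and $g_{\dot\gamma_0}(\dot\gamma_0,\dot\eta_1) \le 0$.

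Finally, for the last assertion, when $\eta$ is a genuine (forward) closed geodesic the loop closes up smoothly, so $\dot\eta_1 = \dot\eta_0$ as vectors in $T_{\eta_0}M$; the two inequalities then read $g_{\dot\gamma_0}(\dot\gamma_0,\dot\eta_0)\ge 0$ and $g_{\dot\gamma_0}(\dot\gamma_0,\dot\eta_0)\le 0$, hence equality. The main obstacle I anticipate is making the differentiation step fully rigorous in the Finsler setting: one must be careful that $b_\gamma$ really is one-sided differentiable along $\eta$ at the basepoint with the claimed gradient, since $b_\gamma$ is only known to be convex (not smooth) globally and the nonsmoothness of $d_F$ at the cut locus could interfere — but near $p=\gamma_0$, before any cut point of $\gamma$, the functions $t \mapsto t - d_F(\gamma_t, x)$ are smooth in $x$ with gradient converging to $-\nabla b_\gamma$, and the first variation formula supplies exactly $-g_{\dot\gamma_0}(\dot\gamma_0,\cdot)$ along geodesics issuing from $p$. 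Once that identification is in place, the convexity-plus-periodicity argument is immediate, exactly as in the Riemannian case treated in \cite{Lakzian2014}.
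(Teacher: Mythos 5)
Your overall strategy---convexity of $b_\gamma$ along $\eta$, the return $\eta_0=\eta_1$, and the first variation formula at the basepoint---is the same as the paper's, but there is a genuine gap in the convexity step as you have stated it. You argue that $b_\gamma\circ\eta$ is convex on a short interval $[0,\varepsilon]$ and on $[1-\varepsilon,1]$, and then assert that ``convexity forces the right derivative at $t=0$ to be $\le$ the value-difference over the loop divided by length.'' That implication requires convexity of $b_\gamma\circ\eta$ on the \emph{entire} interval $[0,1]$; local convexity at the two ends and agreement of the endpoint values do not constrain $\big(b_\gamma\circ\eta\big)'(0^+)$ at all. (One can easily make a function that is convex near $0$ and near $1$, equal at both ends, with positive right derivative at $0$.) The fix is short: since $\eta$ is a locally geodesic loop, every sufficiently short subarc of $\eta$ is a genuine geodesic with $\mathcal T=0$ on its tangent bundle, so by the preceding corollary $b_\gamma$ is convex along each such subarc, i.e.\ $b_\gamma\circ\eta$ is locally convex on $[0,1]$; a continuous function on an interval that is convex in a neighborhood of every point is globally convex. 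Only then does $b_\gamma(\eta_s)\le b_\gamma(\eta_0)=0$ follow for all $s\in[0,1]$.

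Once that is in place, your remaining step differs from the paper's in a way worth noting. You differentiate the Busemann function itself at the basepoint, which --- as you anticipate --- requires justifying that $b_\gamma$ has the expected one-sided derivative along geodesics issuing from $\gamma_0$. The paper avoids this by keeping $t$ finite: the monotone bound $t-d(\gamma_t,\eta_s)\le b_\gamma(\eta_s)\le 0$ yields $d(\gamma_t,\eta_s)\ge d(\gamma_t,\eta_0)$ for every $s$, so $s=0$ is a minimum of $s\mapsto d(\gamma_t,\eta_s)$, and applying the first variation formula from \cite[Exercise 5.2.4]{BCS2000} to this smooth distance function (unique geodesic from $\gamma_0$ to $\gamma_t$) gives the two sign conditions directly without ever having to differentiate $b_\gamma$. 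That route is the cleaner one and sidesteps the regularity issue you flagged. Also double-check your sign convention for $\frac{d}{dt}\big|_{0^+}b_\gamma(\sigma(t))$ against the Finsler first variation formula used in \cite{BCS2000}: a wrong sign there would silently flip the two inequalities, and it is safer to mirror the paper's derivation through $d(\gamma_t,\cdot)$ where the signs are pinned down by the minimality at $s=0$. The closed-geodesic case is handled exactly as you say.
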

\begin{proof}
This is a direct consequence of the first variation formula and the
convexity of the Busemann function. Indeed, by the first variation
formula (see \cite[Exercise 5.2.4]{BCS2000}) and uniqueness of geodesics
between $\gamma_{0}$ and $\gamma_{t}$ we have 
\[
\lim_{s\to0^{+}}\frac{d(\gamma_{t},\eta_{s})-d(\gamma_{t},\eta_{0})}{d(\eta_{0},\eta_{s})}=g_{\dot{\gamma}_{0}}(\dot{\gamma}_{0},\frac{\dot{\eta}_{0}}{F(\dot{\eta}_{0})})
\]
and 
\[
\lim_{s\to1^{-}}\frac{d(\gamma_{t},\eta_{s})-d(\gamma_{t},\eta_{0})}{d(\eta_{s},\eta_{1})}=g_{\dot{\gamma}_{0}}(\dot{\gamma}_{0},-\frac{\dot{\eta}_{1}}{F(\dot{\eta}_{1})}).
\]

Now convexity of the Busemann function $b_{\gamma}$ associated to
$\gamma$ implies that 
\[
b_{\gamma}(\eta_{s})\le b_{\gamma_{0}}(\eta_{0})=0.
\]
However, we have 
\[
d(\gamma_{t},\eta_{0})-d(\gamma_{t},\eta_{s})=t-d(\gamma_{t},\eta_{s})\le b_{\eta}(\gamma_{s})
\]
which immediately shows that $g_{\dot{\gamma}_{0}}(\dot{\gamma}_{0},\dot{\eta}_{0})\ge0$.
A similar argument shows $g_{\dot{\gamma}_{0}}(\dot{\gamma}_{0},-\dot{\eta}_{1})\ge0$. 

If $\gamma$ is a closed geodesic then also $\dot{\eta}_{0}=\dot{\eta}_{1}$
so that 
\[
0\le g_{\dot{\gamma}_{0}}(\dot{\gamma}_{0},\dot{\eta}_{0})=g_{\dot{\gamma}_{0}}(\dot{\gamma}_{0},\dot{\eta}_{1})\le0.
\]
\end{proof}

\subsection{A strong deformation retract onto a weak soul }

Having a weak soul shows that all geodesic loops starting in $S_{x_{0}}$
must stay in $S_{x_{0}}$. Thus if all loops starting at some $x\in S_{x_{0}}$
are homotopic to a geodesic loop starting at $x$ then the fundamental
group of $X$ and $S_{x_{0}}$ are the same. In general, it is not
clear how to use a similar argument for higher homotopy groups. If
$S_{x_{0}}$ was a strong deformation retract of $X$ then the all
fundamental group would agree. 

Throughout this section we assume that $(X,d)$ is a locally compact
and uniformly smooth. Note that this implies that closed bounded sets
are compact. We want to give a condition which implies that each sublevel
of $b_{x_{0}}$ is a strong deformation retract of $X$. 

The first result is just reformulation of \cite[Proposition 1.3]{Cheeger1972}.
The only ingredient used in their proof is the fact that the sublevels
of $b_{x_{0}}$ are closed and totally geodesic.
\begin{lem}
\label{lem:bdry-dist}The sublevel sets $C_{s}=b_{x_{0}}^{-1}([0,s])$
are compact and the boundaries have the following form: Let $0<s<t$
then 
\[
\partial C_{s}=\{x\in C_{t}\,|\,d(x,\partial C_{t})=t-s\}.
\]
\end{lem}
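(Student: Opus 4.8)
The plan is to adapt the classical Cheeger--Gromoll argument, using only that the sublevel sets $C_s = b_{x_0}^{-1}([0,s])$ are closed, totally geodesic, and nested. First I would establish compactness: since $b_{x_0}$ is $1$-Lipschitz and, by Lemma \ref{lem:Cheeger-Gromoll-ray}, through every point there is a ray along which $b_{x_0}$ increases at unit speed, the function $b_{x_0}$ is an exhaustion, so each $C_s$ is closed and bounded; local compactness of $(X,d)$ (which, as noted in the text, forces closed bounded sets to be compact) gives that $C_s$ is compact. I would also record that $C_s$ is convex — indeed totally geodesic — because $b_{x_0}$ is quasi-convex (it is a supremum of Busemann functions, each quasi-convex by the proposition preceding Lemma \ref{lem:Cheeger-Gromoll-ray}), and its sublevels are therefore totally geodesic by the discussion in the preliminaries.

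For the description of $\partial C_s$, fix $0 < s < t$ and prove the two inclusions. For ``$\subseteq$'': let $x \in \partial C_s$, so $b_{x_0}(x) = s$. Since $b_{x_0}$ is $1$-Lipschitz, any $y \in C_t$ with $d(x,y) < t - s$ would satisfy $b_{x_0}(y) \le b_{x_0}(x) + d(x,y) < t$, which is fine, but more to the point for $y \in \partial C_t$ we get $t = b_{x_0}(y) \le s + d(x,y)$, hence $d(x,\partial C_t) \ge t - s$. For the reverse inequality $d(x,\partial C_t) \le t-s$, apply Lemma \ref{lem:Cheeger-Gromoll-ray} at $x$ to get a ray $\gamma_x$ with $b_{x_0}(\gamma_x(r)) = s + r$; then $\gamma_x(t-s) \in \partial C_t$ and $d(x,\gamma_x(t-s)) = t-s$, so $d(x,\partial C_t) = t-s$. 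For ``$\supseteq$'': let $x \in C_t$ with $d(x,\partial C_t) = t - s$. First, $x \notin \partial C_t$ since $t - s > 0$, so $b_{x_0}(x) < t$; write $b_{x_0}(x) = t - a$ with $a > 0$. By the ray from Lemma \ref{lem:Cheeger-Gromoll-ray} we find a point of $\partial C_t$ at distance exactly $a$ from $x$, so $t - s = d(x,\partial C_t) \le a$, i.e. $b_{x_0}(x) = t - a \le s$. Conversely, if we had $b_{x_0}(x) < s$, the same ray would hit $\partial C_s \subset \interior C_t$ at distance $s - b_{x_0}(x)$ and then continue to $\partial C_t$, producing a point of $\partial C_t$ at distance strictly less than $t - b_{x_0}(x)$; but one must rule out a shortcut to $\partial C_t$ of length less than $t - b_{x_0}(x)$ — here the $1$-Lipschitz bound gives exactly $d(x,\partial C_t) \ge t - b_{x_0}(x) > t - s$, a contradiction. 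Hence $b_{x_0}(x) = s$, i.e. $x \in \partial C_s$.

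The one genuinely delicate point, and the step I expect to be the main obstacle, is ensuring that $\partial C_s$ is really the topological boundary of $C_s$ and not something smaller or empty — i.e. that $C_s = b_{x_0}^{-1}([0,s])$ has $b_{x_0}^{-1}(s)$ as its boundary and $b_{x_0}^{-1}([0,s))$ as its interior. This is where the weak-soul theorem is used: since $S = b_{x_0}^{-1}(\min b_{x_0})$ has empty interior and, by Lemma \ref{lem:Cheeger-Gromoll-ray}, every point lies on a ray along which $b_{x_0}$ strictly increases, no sublevel set $C_s$ with $s > \min b_{x_0}$ can contain an open set on which $b_{x_0} = s$; combined with continuity this identifies $\interior C_s = b_{x_0}^{-1}([0,s))$ and $\partial C_s = b_{x_0}^{-1}(s)$. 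Once this identification is in place, the distance formula above is exactly the content of \cite[Proposition 1.3]{Cheeger1972}, whose proof uses nothing beyond closedness and total geodesy of the sublevels, so I would simply cite it for the remaining bookkeeping.
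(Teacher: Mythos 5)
Your boundary argument is essentially correct and, if anything, more explicit than the paper's, which proves only the inequality $d(x,\partial C_t)\ge t-s$ via the $1$-Lipschitz bound and then defers the rest to \cite[Proposition 1.3]{Cheeger1972}. You are also right to flag the identification $\partial C_s=b_{x_0}^{-1}(s)$ as the delicate point, and your use of Lemma~\ref{lem:Cheeger-Gromoll-ray} to show $\interior C_s=b_{x_0}^{-1}([0,s))$ (any $x\in\interior C_s$ has a short initial segment of its ray inside $\interior C_s$, forcing $b_{x_0}(x)<s$) is a correct way to handle it; there is no circularity since Lemma~\ref{lem:Cheeger-Gromoll-ray} precedes this statement. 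In effect you have reproduced, inside the proof of this lemma, the argument the paper later gives in Corollary~\ref{cor:CG-fcn-via-bdry}.

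The genuine gap is in your compactness argument. You assert that because $b_{x_0}$ is $1$-Lipschitz and through every point there is a ray along which $b_{x_0}$ increases at unit speed, $b_{x_0}$ is an exhaustion, hence each $C_s$ is bounded. That inference is invalid: the linear function $b(x_1,x_2)=x_1$ on $\mathbb{R}^2$ is $1$-Lipschitz and through every point has exactly such a ray (the horizontal one to the right), yet its sublevels are half-planes, not bounded. The existence of one ray through each point along which the function increases says nothing about its growth in other directions. What makes $b_{x_0}$ proper is that its sublevels $C_s$ are \emph{closed and convex} (quasi-convexity of $b_{x_0}$). The paper's argument exploits this: if $C_s$ were unbounded, pick $x_n\to\infty$ in $C_s$; the geodesics $\gamma_n$ from $x_0$ to $x_n$ lie in $C_s$ by convexity, and by properness a subsequence converges to a ray $\gamma$ emanating from $x_0$, which lies in $C_s$ by closedness. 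But $b_{x_0}(\gamma(t))\ge b_\gamma(\gamma(t))=t$, which exceeds $s$ for large $t$, a contradiction. This convexity-plus-limiting-ray argument is the essential step and cannot be replaced by the ``ray through every point'' observation you invoke.
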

\begin{proof}
Note by definition $b_{x_{0}}(\gamma(t))=t$ for any ray starting
at $x_{0}$. In particular, $b_{x_{0}}$ is unbounded when restricted
to such a ray. Now suppose that sublevels of $b_{x_{0}}$ are not
compact. Then there is an $s$ and a sequence $x_{n}\to\infty$ with
$b_{x_{0}}(x_{n})\le s$. Let $\gamma_{n}$ be a geodesic connecting
$x_{0}$ and $x_{n}$. By local compactness there is a subsequence
of $(\gamma_{n})$ converging to a ray $\gamma$ starting at $x_{0}$.
As $C_{s}=b_{x_{0}}^{-1}([0,s])$ is closed convex, $\gamma_{n}$
and $\gamma$ lie entirely in $C_{s}$. This, however, implies that
$b_{x_{0}}(\gamma(t))\le s$ which is a contradiction.

To see that second claim, note $b_{x_{0}}$ is $1$-Lipschitz so that
$x\in\partial C_{s}$ and $y\in C_{t}$ with $d(x,y)<t-s$ implies
$b_{x_{0}}(y)<t$.
\end{proof}
The following shows that $b_{x_{0}}$ can be described in a local
manner. It can be used to show that a certain notion of positive curvature
means that the weak soul $S_{x_{0}}$ consists only of a single point,
see Proposition \ref{prop:pos-curv} below. 
\begin{cor}
\label{cor:CG-fcn-via-bdry}For each $r>m$ where $m=\min b_{x_{0}}$
define the function $b_{C_{r}}:C_{r}\to[0,r-m]$ by 
\[
b_{C_{r}}(x)=d(x,\partial C_{r}).
\]
Then $b_{C_{r}}$ is quasi-concave and it holds 
\[
b_{x_{0}}(x)=r-b_{C_{r}}(x)
\]
for $x\in C_{r}$.
\end{cor}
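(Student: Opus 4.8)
The plan is to derive the identity $b_{x_0}(x)=r-b_{C_r}(x)$ for $x\in C_r$ directly from the definition of $b_{x_0}$ together with the structural description of the sublevels provided by Lemma \ref{lem:bdry-dist} and the existence of the gradient-like rays from Lemma \ref{lem:Cheeger-Gromoll-ray}. The quasi-concavity of $b_{C_r}$ will then be immediate: since $b_{x_0}$ is quasi-convex (it is a supremum of Busemann functions, each quasi-convex by the earlier proposition) and $b_{C_r}=r-b_{x_0}$ on the convex set $C_r$, the function $b_{C_r}$ is quasi-concave, and its sublevels $\{b_{C_r}\le c\}=\{b_{x_0}\ge r-c\}\cap C_r$ are exactly the "annular" sets whose relevant traces are totally geodesic within $C_r$.

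\emph{The two inequalities.} First I would show $b_{x_0}(x)\ge r-b_{C_r}(x)$, equivalently $b_{C_r}(x)\ge r-b_{x_0}(x)$. Let $x\in C_r$ and set $s=b_{x_0}(x)\le r$. For any $y\in\partial C_r$ we have $b_{x_0}(y)=r$ (boundary points of a sublevel of a continuous function attain the level value, using that $C_r$ is closed and, by Lemma \ref{lem:bdry-dist}, compact with nonempty interior containing the soul, so that $b_{x_0}$ is not locally constant near $\partial C_r$ — this is where one invokes that rays through interior points exit every sublevel). Since $b_{x_0}$ is $1$-Lipschitz, $r-s=b_{x_0}(y)-b_{x_0}(x)\le d(x,y)$, and taking the infimum over $y\in\partial C_r$ gives $r-s\le d(x,\partial C_r)=b_{C_r}(x)$, as desired.

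\emph{The reverse inequality.} For $b_{x_0}(x)\le r-b_{C_r}(x)$ I would use Lemma \ref{lem:Cheeger-Gromoll-ray}: there is a ray $\gamma_x$ emanating from $x$ with $b_{x_0}(\gamma_x(t))=b_{x_0}(x)+t$ for all $t\ge0$. Let $t_0=r-b_{x_0}(x)\ge0$; then $b_{x_0}(\gamma_x(t_0))=r$, so $\gamma_x(t_0)\in C_r$, and in fact $\gamma_x([0,t_0])\subset C_r$ since $b_{x_0}$ is nondecreasing along $\gamma_x$. Moreover $\gamma_x(t_0)\in\partial C_r$: for $t$ slightly larger than $t_0$ the point $\gamma_x(t)$ has $b_{x_0}$-value exceeding $r$, hence lies outside $C_r$, so $\gamma_x(t_0)$ is a limit of points in the complement and therefore lies on the boundary. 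Since $\gamma_x$ is a unit-speed ray, $d(x,\gamma_x(t_0))= t_0$, whence $b_{C_r}(x)=d(x,\partial C_r)\le t_0=r-b_{x_0}(x)$, i.e. $b_{x_0}(x)\le r-b_{C_r}(x)$. Combining the two inequalities yields the stated identity, and quasi-concavity of $b_{C_r}$ follows as explained above.

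\emph{Expected obstacle.} The only delicate point is the claim that $b_{x_0}(y)=r$ for every $y\in\partial C_r$ and, symmetrically, that $\gamma_x(t_0)$ actually lands on the boundary rather than in the interior; both rest on the fact that $b_{x_0}$ takes values strictly above $r$ arbitrarily close to any boundary point, which in turn uses the escape-to-infinity behaviour of the gradient rays from Lemma \ref{lem:Cheeger-Gromoll-ray} applied at interior points together with the compactness of $C_r$ from Lemma \ref{lem:bdry-dist}. Once this topological bookkeeping is in place, the rest is a two-line Lipschitz-plus-ray argument.
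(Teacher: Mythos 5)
Your proof is correct and follows essentially the same route as the paper's: the core ingredient in both is the ray from Lemma \ref{lem:Cheeger-Gromoll-ray}, which gives $b_{C_r}(x)\le r-b_{x_0}(x)$, while the reverse inequality comes from the $1$-Lipschitz property of $b_{x_0}$ together with $b_{x_0}\equiv r$ on $\partial C_r$. The paper packages these same two facts by first identifying $b_{x_0}^{-1}(s)=\partial C_s$ and then invoking Lemma \ref{lem:bdry-dist}, but the substance is identical to your two-inequality decomposition.
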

\begin{proof}
If $x\in\partial C_{s}$ for $s\le r$ then $b_{x_{0}}(x)=s$. Thus
the lemma implies 
\[
b_{x_{0}}(x)=r-(r-s)=r-b_{C_{r}}(x).
\]
It remains to show that $b_{x_{0}}^{-1}(s)=\partial C_{s}$. Let $x\in b_{x_{0}}^{-1}(s)$.
By Lemma \ref{lem:Cheeger-Gromoll-ray} there is a ray $\gamma_{x}$
emanating from $x$ such that 
\[
b_{x_{0}}(\gamma_{x}(t))=b_{x_{0}}(x)+t.
\]
Since $\gamma_{x}(r-s)\in\partial C_{r}$ we obtain $d(x,\partial C_{r})\le r-s$.
As $b_{x_{0}}$ is $1$-Lipschitz this must be an equality so that
the previous lemma implies $x\in\partial C_{s}$.
\end{proof}
\begin{cor}
\label{cor:subspace-contraction}Assume $(X,d)$ is non-branching
and let $A_{s}$ be the set of points $x\in\partial C_{s}$ such that
$d(x,S_{x_{0}})=s-\min b_{x_{0}}$. Then $A_{r,s}=\cup_{r\le r'\le s}A_{r'}$
is homeomorphic to $A_{s}\times[0,1]$ for all $r,s>\min b_{x_{0}}$.
Furthermore, there is a continuous map $\Phi_{r}:A_{r}\to S_{x_{0}}$.
\end{cor}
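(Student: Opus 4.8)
The plan is to attach to every point of $A_{r,s}$ the unique geodesic that runs from the soul $S_{x_{0}}$ up through that point to the level $s$, and to use these geodesics both for the homeomorphism and for the map $\Phi_{r}$. Throughout write $m=\min b_{x_{0}}$, so that $A_{s}=\{x\in\partial C_{s}\,|\,d(x,S_{x_{0}})=s-m\}$.

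\textbf{Step 1 (the geodesics $\sigma_{x}$ and their uniqueness).} Fix $x\in A_{s}$. Since $S_{x_{0}}$ is compact (it is a sublevel of $b_{x_{0}}$, cf.\ Lemma \ref{lem:bdry-dist}), the distance $d(x,S_{x_{0}})=s-m$ is attained at some foot point $p\in S_{x_{0}}$; let $\sigma_{x}\colon[0,s-m]\to X$ be a unit-speed geodesic from $p$ to $x$. As $b_{x_{0}}$ is $1$-Lipschitz and changes by exactly $s-m$ along $\sigma_{x}$, it satisfies $b_{x_{0}}(\sigma_{x}(t))=m+t$, hence $d(\sigma_{x}(t),S_{x_{0}})\ge b_{x_{0}}(\sigma_{x}(t))-m=t=d(\sigma_{x}(t),p)$, so $\sigma_{x}(t)\in A_{m+t}$ for all $t$. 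The main point, and the step I expect to be the real obstacle, is that $\sigma_{x}$ and $p$ are unique. To see this I would use Lemma \ref{lem:Cheeger-Gromoll-ray} to pick a ray $\gamma_{x}$ from $x$ with $b_{x_{0}}(\gamma_{x}(\tau))=s+\tau$; a short Lipschitz and triangle-inequality computation shows that the concatenation of $\sigma_{x}$ with $\gamma_{x}$ is an honest ray issuing from $p$. If $\sigma'_{x}$ were a second minimizing geodesic from $S_{x_{0}}$ to $x$ with foot point $p'$, then concatenating each of $\sigma_{x},\sigma'_{x}$ with the same tail $\gamma_{x}$ and reversing both from a common far point $\gamma_{x}(T)$ produces two geodesics starting at $\gamma_{x}(T)$ that coincide at the interior point $x$; non-branching forces them to agree, so $\sigma'_{x}=\sigma_{x}$ and $p'=p$. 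In particular $\sigma_{\sigma_{x}(t)}$ equals $\sigma_{x}$ restricted to $[0,t]$, so the family $\{\sigma_{x}\}_{x\in A_{s}}$ is nested.

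\textbf{Step 2 (continuity and the map $\Phi_{r}$).} If $x_{n}\to x$ in $A_{s}$, the curves $\sigma_{x_{n}}$ are uniformly $1$-Lipschitz and lie in the compact set $C_{s}$, so by Arzel\`a--Ascoli a subsequence converges uniformly to a unit-speed geodesic from $S_{x_{0}}$ to $x$ of length $s-m$; by Step 1 this limit is $\sigma_{x}$, and since every subsequence has a further subsequence with this limit, $\sigma_{x_{n}}\to\sigma_{x}$. Hence $x\mapsto\sigma_{x}$ is continuous, and the same argument with $r$ in place of $s$ shows $y\mapsto\sigma_{y}$ is continuous on $A_{r}$; thus $\Phi_{r}(y):=\sigma_{y}(0)\in S_{x_{0}}$ is a well-defined continuous map.

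\textbf{Step 3 (the homeomorphism).} Define $\Psi\colon A_{s}\times[r,s]\to A_{r,s}$ by $\Psi(x,r')=\sigma_{x}(r'-m)$; it is well defined, since $r'-m\in[0,s-m]$ and $\sigma_{x}(r'-m)\in A_{r'}$, and continuous by Step 2. It is injective: from $\Psi(x_{1},r'_{1})=\Psi(x_{2},r'_{2})=:y$ one reads off $r'_{1}=r'_{2}=:r'$ by applying $b_{x_{0}}$; the two segments $\sigma_{x_{i}}|_{[0,r'-m]}$ both realize $d(y,S_{x_{0}})$ and hence coincide with $\sigma_{y}$ by Step 1, so $\sigma_{x_{1}},\sigma_{x_{2}}$ are geodesics agreeing on a nondegenerate initial segment through the interior point $y$, whence $x_{1}=x_{2}$ by non-branching. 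It is surjective: given $y\in A_{r'}$ with $r\le r'\le s$, extend $\sigma_{y}$ by a ray $\gamma_{y}$ from $y$ (Lemma \ref{lem:Cheeger-Gromoll-ray}) to a ray from the foot point of $y$; the point $x:=\gamma_{y}(s-r')$ then satisfies $b_{x_{0}}(x)=s$ and $d(x,S_{x_{0}})=s-m$, i.e.\ $x\in A_{s}$, and by uniqueness the concatenation is $\sigma_{x}$, so $\Psi(x,r')=\sigma_{x}(r'-m)=y$. Finally $A_{s}$ is closed in the compact space $C_{s}$, because $b_{x_{0}}$ and $d(\cdot,S_{x_{0}})$ are continuous, so $A_{s}\times[r,s]$ is compact and $A_{r,s}$ Hausdorff; a continuous bijection between such spaces is a homeomorphism, and composing with an affine identification $[0,1]\cong[r,s]$ gives $A_{r,s}\cong A_{s}\times[0,1]$. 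The only genuinely delicate point in this outline is the uniqueness in Step 1, on which everything else — well-definedness of $\Phi_{r}$, nestedness, and both injectivity and surjectivity of $\Psi$ — depends.
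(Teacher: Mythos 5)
Your proof is correct and takes essentially the same approach as the paper: extend the ray from Lemma \ref{lem:Cheeger-Gromoll-ray} backwards through the foot point in $S_{x_{0}}$ to form a ray emanating from the soul, invoke non-branching for uniqueness of the foot point and the connecting geodesic, and observe that this geodesic meets each level set $A_{r'}$ in exactly one point. Your write-up merely fills in topological details the paper leaves to the reader (the Arzel\`a--Ascoli argument for continuity and the compactness argument upgrading a continuous bijection to a homeomorphism).
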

\begin{proof}
We only indicate the proof as the construction is similar to the one
above: The ray $\gamma$ given by Lemma \ref{lem:Cheeger-Gromoll-ray}
can be extended backwards to a ray starting in $S_{x_{0}}$. More
precisely, if $\gamma'$ is any unit speed geodesic $\gamma'$ connecting
$y_{x}$ and $x$ where $y_{x}$ is a foot point of $x$ in $S_{x_{0}}$.
Because $d(x,S_{x_{0}})=b_{x_{0}}(x)-\min b_{x_{0}}$, $\gamma\cup\gamma'$
is a ray emanating from $y_{x}$. By non-branching and compactness
of $S_{x_{0}}$ we see that there is exactly one $y\in S_{x_{0}}$
with $d(x,y)=d(x,S_{x_{0}})$. Furthermore, the assignment $x\mapsto y_{x}$
is continuous. 

Furthermore, note that $\gamma'$ intersects each $A_{r}$ with $\min b_{x_{0}}<r\le s$
in exactly one point. This shows that $A_{r,s}$ is homeomorphic to
$A_{s}\times[0,1]$.
\end{proof}
In general it is not true that $A_{s}=\partial C_{s}$. Indeed, assume
$X$ is a space which is obtained by glueing half a disk of diameter
$1$ with a flat half-strip of width $1$. Then it is easy to see
that $\cup_{s\ge\min b_{x_{0}}}A_{s}$ corresponds to the ray starting
at the ``south pole'' of the disk. 

The following result shows that for a subclass with nice gradient
flow behavior there is a contractive deformation retract. Note, however,
it is expected that only Riemannian-like metric space have such property
(see \cite{OS2012}). For a general overview of gradient flows on
metric spaces we refer to \cite{AmbGigSav2008}. In the setting of
Alexandrov spaces the contractive behavior was shown in \cite{Perelman1994,Lytchak2006}.
\begin{prop}
Assume $(X,d)$ is $p$-uniformly smooth and gradient flows of convex
functions exist and are contractive. Then there is a strong deformation
retract $F:X\times[0,1]\to X$ onto $S_{x_{0}}$ such that $F_{t}$
is a contraction onto a sublevel set of $b_{x_{0}}$. In particular,
$F_{1}:X\to S_{x_{0}}$ is a contraction.
\end{prop}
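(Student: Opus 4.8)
The plan is to run the gradient flow of the convex Cheeger--Gromoll function $b:=b_{x_{0}}$ and reparametrise it so that at ``time'' $t$ every point of $X$ has been pushed down to a common level $s(t)$, with $s(t)$ decreasing from $+\infty$ to $\min b$.

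First I would collect the ingredients. By $p$-uniform smoothness and Proposition \ref{prop:convex-Busemann} the function $b$ is convex; it is $1$-Lipschitz (a supremum of the $1$-Lipschitz functions $b_{\gamma}$); by Lemma \ref{lem:bdry-dist} the sublevel sets $C_{r}=b^{-1}((-\infty,r])$ are compact and convex, so $m:=\min b$ is attained and $S_{x_{0}}=b^{-1}(m)=C_{m}$ is a nonempty compact convex set. By hypothesis the gradient flow $\Psi:X\times[0,\infty)\to X$ of $b$ exists, is jointly continuous, has $\Psi_{0}=\mathrm{id}$, and is contractive; I would use the Evolution Variational Inequality that such flows satisfy,
\[
\tfrac{d}{ds}\,\tfrac12\,d(\Psi_{s}(x),z)^{2}\ \le\ b(z)-b(\Psi_{s}(x))\qquad(z\in X),
\]
which yields: $s\mapsto b(\Psi_{s}(x))$ is nonincreasing, each point of $S_{x_{0}}$ is a fixed point, the distance to any point of $S_{x_{0}}$ is nonincreasing along the flow, and — using properness and the energy identity — the limit $\Psi_{\infty}(x):=\lim_{s\to\infty}\Psi_{s}(x)$ exists and lies in $S_{x_{0}}$. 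Since $b$ is convex its slope $|\partial b|$ vanishes precisely on $S_{x_{0}}$, so $s\mapsto b(\Psi_{s}(x))$ is \emph{strictly} decreasing until it reaches the value $m$.

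Now fix a continuous strictly decreasing bijection $s:[0,1]\to[m,+\infty]$ with $s(1)=m$, and for $(x,t)$ set $\tau_{t}(x):=\inf\{s\ge0:b(\Psi_{s}(x))\le s(t)\}\in[0,+\infty]$ (this is $0$ when $b(x)\le s(t)$, is finite whenever $s(t)>m$ and $b(x)>s(t)$, and equals $+\infty$ only when $s(t)=m<b(x)$). Define $F(x,t)=\Psi_{\tau_{t}(x)}(x)$, with the convention $\Psi_{+\infty}:=\Psi_{\infty}$. Then $F_{0}=\mathrm{id}$; $F_{t}$ restricts to the identity on $C_{s(t)}\supseteq S_{x_{0}}$; and for $x\notin C_{s(t)}$ one has $b(F_{t}(x))=s(t)$ (for $t<1$, by continuity of the strictly decreasing map $s\mapsto b(\Psi_{s}(x))$; for $t=1$ because $F_{1}(x)=\Psi_{\infty}(x)\in S_{x_{0}}$). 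Hence each $F_{t}$ is a retraction of $X$ onto the sublevel set $C_{s(t)}$, $F_{1}(X)=S_{x_{0}}$, and $F$ is a strong deformation retraction onto $S_{x_{0}}$. Joint continuity of $F$ on $X\times[0,1)$ comes from joint continuity of $\Psi$ and of $(x,t)\mapsto\tau_{t}(x)$, the latter because $s\mapsto b(\Psi_{s}(x))$ crosses the level $s(t)$ strictly monotonically (the slope being bounded away from $0$ near a point with $b$-value $>m$); continuity at $t=1$ follows since $\tau_{t}(x)\to\infty$ forces $\Psi_{\tau_{t}(x)}(x)\to\Psi_{\infty}(x)$, made uniform in $x$ by contractivity, and the behaviour at $t=0$ (where $s(0)=+\infty$, so $\tau_{0}\equiv0$) is immediate.

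It remains to see that each $F_{t}$ is a \emph{contraction}, i.e.\ $1$-Lipschitz. Fix $t$ and $x,y$ and, after relabelling, assume $\tau:=\tau_{t}(x)\le\tau_{t}(y)=:\tau'$. Contractivity on $[0,\tau]$ gives $d(\Psi_{\tau}(x),\Psi_{\tau}(y))\le d(x,y)$. For $r\in[\tau,\tau']$ I apply the EVI with the \emph{fixed} base point $z=\Psi_{\tau}(x)$: since $b(\Psi_{\tau}(x))=\min\{b(x),s(t)\}\le s(t)\le b(\Psi_{r}(y))$ (the last inequality because $b(\Psi_{\cdot}(y))$ is nonincreasing and only reaches $s(t)$ at $r=\tau'$), we get $\tfrac{d}{dr}\tfrac12 d(\Psi_{\tau}(x),\Psi_{r}(y))^{2}\le0$, so $d(F_{t}(x),F_{t}(y))=d(\Psi_{\tau}(x),\Psi_{\tau'}(y))\le d(\Psi_{\tau}(x),\Psi_{\tau}(y))\le d(x,y)$. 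In particular $F_{1}=\Psi_{\infty}$, a pointwise limit of the $1$-Lipschitz maps $\Psi_{s}$, is a contraction onto $S_{x_{0}}$. I expect the crux to be exactly this last step: arranging the reparametrisation $\tau_{t}$ so that the two cooperating mechanisms — contractivity of the flow while both points are still descending, and EVI-monotonicity once the ``lower'' point has come to rest inside the target sublevel set containing the other's eventual position — fit together to give a genuine $1$-Lipschitz retraction rather than merely a Lipschitz map; the remaining bookkeeping (continuity of $\tau_{t}$, the degenerate endpoints) is routine.
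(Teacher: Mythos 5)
Your construction produces the same family of retractions as the paper's, but via a genuinely different argument: you run the gradient flow $\Psi$ of $b_{x_{0}}$ itself and stop each trajectory at the hitting time $\tau_{t}(x)$ of $C_{s(t)}$, whereas the paper runs, for each level $r$, the gradient flow $\Phi^{r}$ of the \emph{truncated} convex function $b_{x_{0}}^{r}=\max\{b_{x_{0}},r\}$ and takes $F_{r}=\lim_{t\to\infty}\Phi_{t}^{r}$. (The trajectories coincide, since $b_{x_{0}}^{r}=b_{x_{0}}$ outside $C_{r}$ and $\Phi^{r}$ is stationary on $C_{r}$.) The difference matters for the contraction estimate. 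The paper's route needs nothing beyond the stated hypothesis: $b_{x_{0}}^{r}$ is convex, so by assumption its flow $\Phi_{t}^{r}$ is a contraction at every fixed time $t$, and $F_{r}$ is a pointwise limit of contractions, hence a contraction. Your route compares $\Psi_{\tau}(x)$ with $\Psi_{\tau'}(y)$ for \emph{different} stopping times $\tau\le\tau'$; plain contractivity covers only $[0,\tau]$, and to push across $[\tau,\tau']$ you invoke the Evolution Variational Inequality. That is a real extra input: the Proposition assumes only existence and \emph{contractivity} of gradient flows, and EVI is strictly stronger than contractivity in general (contractivity alone does not give Fej\'er monotonicity with respect to non-critical base points such as $\Psi_{\tau}(x)\in\partial C_{s(t)}\setminus S_{x_{0}}$). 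If you intend to rely on EVI you should say so explicitly as a strengthening of the hypothesis; otherwise the truncation trick is the cleaner way to close the gap, since it converts the mismatch in stopping times into a comparison at a single time where the hypothesis applies directly.
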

\begin{rem*}
Note that only convexity of $b_{x_{0}}$ is needed. More generally,
a slight adjustment of proof works if $b_{x_{0}}$ is $p$-convex.
It is unclear whether quasi-convexity of $b_{x_{0}}$ would be sufficient.
\end{rem*}
\begin{proof}
If $b_{x_{0}}$ is convex so is $b_{x_{0}}^{r}=\max\{b_{x_{0}},r\}$.
Denote the gradient flows of $b_{x_{0}}^{r}$ by $\Phi_{t}^{r}:X\to X$.
As the definition is local we see that for $x\in X$ with $b_{x_{0}}(x)>r\ge r'$
there is a $t_{x}^{r}\in(0,\infty)$ such that $\Phi_{t}^{r}(x)=\Phi_{t}^{r'}(x)$
for $t\in[0,t_{x}^{r}]$. More precisely, the equality fails once
$b_{x_{0}}(\Phi_{t}^{r}(x))\le r$. Similarly, one can show that $\Phi_{t}^{r}$
restricted to $C_{r}$ is the identify. Note that any $r\ge\min b_{x_{0}}$
is reached in finite time. Furthermore, $\Phi_{t}^{r}$ is constant
on $C_{r}$. 

For any $x,y\in X$ and $t\ge\max\{t_{x}^{r},t_{y}^{r}\}$ it holds
\begin{eqnarray*}
\Phi_{t}^{r}(x) & = & \Phi_{t_{x}}^{r}(x)\\
\Phi_{t}^{r}(y) & = & \Phi_{t_{y}}^{r}(y).
\end{eqnarray*}
The contraction property implies 
\[
d(\Phi_{t}^{r}(x),\Phi_{t}^{r}(y))\le d(x,y).
\]
Define now 
\[
F_{r}(x)=\lim_{t\to\infty}\Phi_{t}^{r}(x).
\]
Since each $x$ reaches $C_{r}$ in finite time, the map is well-defined.
Furthermore, we see that $F_{r}$ is contractive and maps $X$ onto
$C_{r}$. Also note that $F_{r}$ is the identify on $C_{r}$. 

If $\phi:[0,1)\to[\min b_{x_{0}},\infty)$ then the following map
$F:X\times[0,1]\to X$ satisfies the required properties: 
\[
F(x,s)=\begin{cases}
F_{\phi(1-s)}(x) & s>0\\
x & s=0.
\end{cases}
\]
We only need to show that $(x_{n},s_{n})\to(x,0)$ implies $F(x_{n},s_{n})\to x$.
However, this follows because there is an $r>0$ such that $x_{n},x\in C_{r}$.
Since $t_{n}=\phi(1-s_{n})\to\infty$ we see that for large $n$ it
holds $t_{n}\ge r$ and thus 
\[
F(x_{n},s_{n})=F_{t_{n}}(x_{n})=x_{n}\to x.
\]
\end{proof}
For non-Riemannian-like metric spaces the above construction does
not work. However, if we assume that locally projections onto convex
sets are unique then it is still possible to construction a strong
deformation retract. Such an assumption would imply a local weak upper
curvature bound which is not satisfied on Alexandrov spaces. Indeed,
Petrunin gave an example of an Alexandrov space which contains a convex
set without such a property \cite{Petrunin2013}. As this is not published
anywhere else here a short construction: Let $X$ be the doubling
of the (convex) region $\{(x,y)\in\mathbb{R}^{2}\,|\,y\ge x^{2}\}$.
Now let $A$ be the (doubling) of $F$ intersected with the ball with
center $(0,-1)$ passing through $(1,1)$. This set is convex in $X$.
Every point on the boundary of $F$ outside of $A$ has two projection
points onto $A$. 
\begin{defn}
[strict convexity radius] The (strict) convexity radius $\rho(x)$
of a point $x$ in a metric space $(X,d)$ is the supremum of all
$r\ge0$ such that the closed ball $\bar{B}_{r}(x)$ is (strictly)
convex. 
\end{defn}

\begin{defn}
[injectivity radius] The injectivity radius $i(x)$ of a point $x\in X$
is the supremum of all $r\ge0$ such that for all $y\in B_{r}(x)$
there is only one geodesic connecting $x$ and $y$.
\end{defn}
\begin{lem}
\label{lem:str-convex-str-inj}Assume $(X,d)$ has strict convexity
radius locally bounded away from zero. Then the injectivity radius
is locally bounded away from zero.
\end{lem}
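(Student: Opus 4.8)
The plan is to deduce uniqueness of short geodesics from uniqueness of midpoints, to obtain the latter directly from strict convexity of small closed balls, and then to localize by choosing neighbourhoods small enough that an \emph{entire} geodesic between nearby points stays inside a region where balls of the relevant radius are strictly convex. The main subtlety — and essentially the only obstacle — is interpretational: the argument uses that $\bar{B}_{r}(x)$ is strictly convex for \emph{all} radii $r$ below the strict convexity radius at $x$, i.e.\ that strict convexity of concentric closed balls is monotone in the radius on this scale. I will take this to be part of the meaning of ``the strict convexity radius is (locally) at least $\epsilon$'', which seems to be the intended reading; everything else is routine bookkeeping.

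\emph{Step 1 (unique midpoints).} I would first prove: if $u,v\in X$ satisfy $\rho(u),\rho(v)>\tfrac{1}{2}d(u,v)$, then $u$ and $v$ have exactly one midpoint. The case $u=v$ is trivial, so assume $u\neq v$ and put $r=\tfrac{1}{2}d(u,v)>0$. If $m_{1}\neq m_{2}$ were both midpoints, then $d(u,m_{i})=d(v,m_{i})=r$ for $i=1,2$, so any geodesic $\zeta$ from $m_{1}$ to $m_{2}$ is a non-constant geodesic with both endpoints in $\bar{B}_{r}(u)$. Since $r<\rho(u)$, the ball $\bar{B}_{r}(u)$ is strictly convex, so its midpoint $\mu$ satisfies $\mu\in\interior\bar{B}_{r}(u)$, i.e.\ $d(u,\mu)<r$; by the same argument applied to $v$, $d(v,\mu)<r$. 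Then $d(u,v)\le d(u,\mu)+d(\mu,v)<2r=d(u,v)$, a contradiction.

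\emph{Step 2 (unique geodesics from unique midpoints).} Suppose $\gamma_{1},\gamma_{2}\colon[0,1]\to X$ are constant-speed geodesics with the same endpoints $x=\gamma_{i}(0)$ and $y=\gamma_{i}(1)$, and suppose $\rho(z)>\tfrac{1}{2}d(x,y)$ for every $z$ lying on the image of $\gamma_{1}$ or of $\gamma_{2}$. Then $\gamma_{1}=\gamma_{2}$: by induction on $n$ one shows $\gamma_{1}(k/2^{n})=\gamma_{2}(k/2^{n})$ for all $0\le k\le 2^{n}$, the base case being the hypothesis on the endpoints. For the step, $\gamma_{i}\bigl((2j+1)/2^{n}\bigr)$ is a midpoint of $\gamma_{i}(j/2^{n-1})$ and $\gamma_{i}((j+1)/2^{n-1})$; these two points coincide for $i=1,2$ by the inductive hypothesis, their distance is $2^{-(n-1)}d(x,y)\le d(x,y)$, so Step~1 applies and the midpoints coincide. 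Since the dyadic rationals are dense in $[0,1]$ and the $\gamma_{i}$ are continuous, $\gamma_{1}=\gamma_{2}$.

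\emph{Step 3 (localization).} Fix $p\in X$. By hypothesis there are $\sigma>0$ and $\epsilon>0$ with $\rho(x)\ge\epsilon$ for all $x\in B_{\sigma}(p)$, and after shrinking $\epsilon$ we may assume $\epsilon\le\sigma$. Set $V=B_{\sigma/2}(p)$ and $\delta=\epsilon/4$. If $x\in V$ and $y\in B_{\delta}(x)$, then any geodesic $\gamma$ from $x$ to $y$ satisfies $d(p,\gamma(t))\le d(p,x)+d(x,y)<\sigma/2+\sigma/4<\sigma$, so every point on every such geodesic lies in $B_{\sigma}(p)$ and hence has $\rho\ge\epsilon>\tfrac{1}{2}d(x,y)$. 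By Steps~1--2 the geodesic from $x$ to $y$ is unique, so $i(x)\ge\delta$. As $x\in V$ was arbitrary, the injectivity radius is bounded below by $\delta$ on the neighbourhood $V$ of $p$, which is the assertion. The only place that requires care is, as noted, that the choice of $V$ and $\delta$ as fixed fractions of $\sigma$ and $\epsilon$ is precisely what forces the whole geodesic to remain in the region where the balls of radius $<\epsilon$ are strictly convex; and the interpretive point about monotonicity of strict convexity in the radius must be granted.
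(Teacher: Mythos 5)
Your argument follows essentially the same route as the paper's: derive uniqueness of midpoints from strict convexity of the two balls $\bar{B}_{d}(x)$ and $\bar{B}_{d}(y)$ (the paper phrases the contradiction as the midpoint of $m,m'$ being forced into the empty intersection $B_{d}(x)\cap B_{d}(y)$, which is the same triangle-inequality contradiction you derive directly), then pass from unique midpoints to unique geodesics, and finally localize. The only difference is cosmetic: where the paper simply says ``local compactness immediately gives uniqueness of geodesics,'' you spell out the dyadic-subdivision argument, and your opening caveat about whether $\rho(x)\ge\epsilon$ really entails strict convexity of \emph{all} balls of radius below $\epsilon$ is a fair observation about the supremum-based definition that the paper uses implicitly as well.
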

\begin{rem*}
The lemma shows that if $(X,d)$ was in addition also a weak Busemann
$G$-space then it is actually a Busemann $G$-space (see below). 
\end{rem*}
\begin{proof}
Choose some large ball $B_{R}(x_{0})$ such that $\rho(x)\ge\epsilon$
for all $x\in B_{R}(x_{0})$ and $\epsilon\le\frac{R}{4}$. We claim
that $i(x)\ge2\epsilon$. Indeed, let $y\in B_{2\epsilon}(x)$ for
$x\in B_{\frac{R}{4}}(x_{0})$ and set $2d=d(x,y)<2\epsilon$. Then
$B_{d}(x)\cap B_{d}(y)=\varnothing$, and $\bar{B}_{d}(x)\cap\bar{B}_{d}(y)$
is non-empty and consists entirely of midpoints of $x$ and $y$.
However, if there were two distinct points $m,m'\in\bar{B}_{d}(x)\cap\bar{B}_{d}(y)$
then their midpoint would be in $B_{d}(x)\cap B_{d}(y)$ which is
a contradiction. This implies that midpoints in $B_{R}(x_{0})$ are
unique. Local compactness immediately gives uniqueness of the geodesics.
\end{proof}
\begin{lem}
Assume $(X,d)$ has strict convexity radius locally bounded away from
zero. Let $C$ be a compact convex subset of $X$ then there is an
$\epsilon>0$ such that any point $y$ in the $\epsilon$-neighborhood
$C_{\epsilon}$ of $C$ has a unique point $y_{C}\in C$ such that
$d(y,y_{C})=d(y,C)$.
\end{lem}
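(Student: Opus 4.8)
The plan is to derive the statement from strict convexity of small balls together with convexity of $C$, running essentially the midpoint argument already used in the proof of Lemma~\ref{lem:str-convex-str-inj} — but with the relevant ball centred at the exterior point $y$ rather than at a pair of nearby points.

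First I would fix a uniform strict convexity radius near $C$. Since the strict convexity radius is locally bounded away from zero, each $x\in C$ has an open neighbourhood $V_x$ and a constant $\eta_x>0$ with $\rho\ge\eta_x$ on $V_x$; compactness of $C$ lets me cover $C$ by finitely many of the $V_x$, producing an open set $U\supseteq C$ and a constant $\epsilon_0>0$ with $\rho(x)\ge\epsilon_0$ for every $x\in U$. Using compactness of $C$ and openness of $U$ once more, there is $\delta>0$ with $C_\delta\subseteq U$; I then set $\epsilon=\min\{\delta,\tfrac12\epsilon_0\}$, which depends only on $C$.

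Now fix $y\in C_\epsilon$ and put $r=d(y,C)$, so that $r<\epsilon$. Compactness of $C$ and continuity of $d(y,\cdot)$ show that the infimum $r$ is attained, so a closest point exists. For uniqueness, suppose $m,m'\in C$ both realise $d(y,m)=d(y,m')=r$. If $r=0$ then $m=m'=y$ and there is nothing to prove, so assume $r>0$ and $m\ne m'$, and choose any geodesic $\gamma\colon[0,1]\to X$ from $m$ to $m'$. Convexity of $C$ gives $\gamma(t)\in C$ for all $t\in(0,1)$. On the other hand $m,m'\in\bar B_r(y)$, and $\bar B_r(y)$ is strictly convex: indeed $y\in U$, so $\rho(y)\ge\epsilon_0>\epsilon>r$, and (as in the proof of Lemma~\ref{lem:str-convex-str-inj}) this forces strict convexity of $\bar B_s(y)$ for every $s<\epsilon_0$, in particular for $s=r$. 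Hence the midpoint $p=\gamma(\tfrac12)$ lies in the open ball $B_r(y)$, i.e. $d(y,p)<r$; but $p\in C$ forces $d(y,p)\ge d(y,C)=r$, a contradiction. So the closest point $y_C$ is unique.

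I do not expect a serious obstacle here: the geometric content is exactly the one-line midpoint contradiction, and strict (as opposed to mere) convexity of the ball is precisely what upgrades the weak estimate $d(y,p)\le r$ to the strict one that yields the contradiction — this is why the hypothesis is on the \emph{strict} convexity radius. The only mildly technical point is the first step, namely passing from a pointwise lower bound on the strict convexity radius to a uniform one on a neighbourhood of $C$; this is a routine compactness argument, and it is also what pins down the value of $\epsilon$.
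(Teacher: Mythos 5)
Your proof is correct and uses the same argument as the paper: reduce to a uniform strict-convexity radius $\epsilon_0$ on a neighbourhood of $C$ by compactness, then run the midpoint contradiction — two nearest points $y_C,y_C'\in\partial \bar B_r(y)$ with $r=d(y,C)$ would give a midpoint in $C\cap\interior B_r(y)$ by strict convexity of the ball and convexity of $C$. If anything your write-up is cleaner than the paper's, which carelessly writes $\partial B_\epsilon(y)$ where $\partial B_{d(y,C)}(y)$ is meant.
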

\begin{proof}
For every $x_{0}\in X$ and $R>0$ there is an $\epsilon>0$ such
that $\rho(x),i(x)\ge2\epsilon$ for all $x\in B_{R}(x_{0})$. Assume
$C\subset B_{\frac{R}{2}}(x_{0})$ and $\epsilon<\frac{R}{2}$. We
claim that $C_{\epsilon}$ has the required property. 

Let $y\in C_{\epsilon}$. Since $C$ is compact, there is at least
one such $y_{C}$. Assume there are two $y_{C},y_{C}^{'}\in C$ with
$d(y,y_{C})=d(y,y_{C}^{'})=d(y,C)\le\epsilon$. From the definition
we have $y_{C},y_{C}^{'}\in\partial B_{\epsilon}(y)$ with $\interior B_{\epsilon}(y)\cap C\ne\varnothing$.
Assume $m$ is the midpoint of $y_{C}$ and $y_{C}^{'}$ then by convexity
of $C$, $m\in C$. However, this implies that $m\notin\interior B_{\epsilon}(x)$.
Then strict convexity of $B_{\epsilon}(y)$ shows $y_{C}=y_{C}^{'}$,
i.e. $y$ has exactly one closest point in $C$.
\end{proof}
\begin{cor}
Assume there is a convex set $C'$ with $C\subset C'\subset C_{\epsilon}$.
Then the projection induces a strong deformation retract of $C'$
onto $C$.
\end{cor}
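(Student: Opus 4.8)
The plan is to use the nearest-point projection $\pi\colon C_{\epsilon}\to C$ supplied by the preceding lemma, which sends $y$ to its unique foot point $y_{C}\in C$, together with the (unique) geodesics running from each point back to its projection. First I would check that $\pi$ is continuous: if $y_{n}\to y$ in $C_{\epsilon}$, then the points $\pi(y_{n})$ all lie in the compact set $C$, and any subsequential limit $z$ satisfies $d(y,z)=\lim_{n}d(y_{n},\pi(y_{n}))=\lim_{n}d(y_{n},C)=d(y,C)$; uniqueness of the foot point then forces $z=\pi(y)$, so $\pi(y_{n})\to\pi(y)$. In particular $\pi$ restricts to a continuous retraction $C'\to C$ which is the identity on $C$.

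Next I would invoke the construction behind the preceding lemma: we may assume $C\subset B_{R/2}(x_{0})$, $\epsilon<R/2$, and (by Lemma \ref{lem:str-convex-str-inj}) that both the strict convexity radius and the injectivity radius are $\ge 2\epsilon$ on $B_{R}(x_{0})$, so that $C_{\epsilon}\subset B_{R}(x_{0})$. For $y\in C'$ we have $d(y,\pi(y))=d(y,C)\le\epsilon<2\epsilon$, so there is a unique constant-speed geodesic $\gamma_{y}\colon[0,1]\to X$ with $\gamma_{y}(0)=y$ and $\gamma_{y}(1)=\pi(y)$; since $y,\pi(y)\in C'$ and $C'$ is convex, $\gamma_{y}([0,1])\subset C'$, and since $d(\gamma_{y}(t),\pi(y))\le\epsilon$ the geodesic stays inside $B_{R}(x_{0})$. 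Define $F\colon C'\times[0,1]\to C'$ by $F(y,t)=\gamma_{y}(t)$. Then $F(\cdot,0)=\mathrm{id}_{C'}$, $F(\cdot,1)=\pi$ takes values in $C$, and for $x\in C$ one has $\pi(x)=x$, so $\gamma_{x}$ is the constant geodesic and $F(x,t)=x$ for all $t$. Hence $F$ will be a strong deformation retract of $C'$ onto $C$ once it is shown to be continuous, and the values land in the sublevel-type set $C$ at time $1$ as required.

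The one step needing a genuine (though routine) argument is the continuity of $F$, and I expect this to be the main obstacle. Given $(y_{n},t_{n})\to(y,t)$ in $C'\times[0,1]$, the curves $\gamma_{y_{n}}$ are $\epsilon$-Lipschitz maps into the compact set $\bar{B}_{R}(x_{0})$, hence by Arzel\`a--Ascoli some subsequence converges uniformly to a geodesic from $y$ to $\lim_{n}\pi(y_{n})=\pi(y)$; by uniqueness of geodesics inside $B_{R}(x_{0})$ this limit is $\gamma_{y}$. Since every subsequence of $(\gamma_{y_{n}})$ has a further subsequence converging uniformly to $\gamma_{y}$, the full sequence converges uniformly to $\gamma_{y}$, and therefore $F(y_{n},t_{n})=\gamma_{y_{n}}(t_{n})\to\gamma_{y}(t)=F(y,t)$. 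This establishes continuity of $F$ and completes the proof; the same local-compactness-plus-uniqueness argument is of the kind already used for the rays in Lemma \ref{lem:Cheeger-Gromoll-ray}.
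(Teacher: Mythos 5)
Your proof is correct and follows essentially the same route as the paper: project via the unique foot point, run the unique short geodesic from $y$ to $\pi(y)$ (which stays in $C'$ by convexity), and deduce continuity of the homotopy from local compactness together with local uniqueness of geodesics. The paper states the continuity step without unpacking it; your Arzel\`a--Ascoli argument and the separate verification that $\pi$ itself is continuous simply make explicit what the paper leaves implicit.
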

\begin{proof}
Denote the closest point projection by $p:C'\to C.$ As the geodesic
$\gamma:[0,1]\to X$ connecting $x$ and $p(x)$ is unique we may
define a map $P:C'\times[0,1]\to C'$ by 
\[
P(x,t)=\gamma_{t}.
\]
Note that $P(x,0)=x$, $P(x,1)=p(x)$ and by convexity of $C'$ we
also have $P(x,t)\in C'$ for all $(x,t)\in C'\times[0,1]$. Furthermore,
local compactness together with local uniqueness of the geodesics
shows that $P$ is continuous and thus a strong deformation retract.
\end{proof}
\begin{thm}
Assume $(X,d)$ has strict convexity radius locally bounded away from
zero and there a quasi-convex continuous function $b:X\to[0,\infty)$
with compact sublevels. Then for any $r>0$ the sets $C_{r}=b^{-1}([0,r])$
are strong deformation retracts of $(X,d)$. In particular, $X$ retracts
onto the weak soul $S$.
\end{thm}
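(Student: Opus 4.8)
The plan is to realise the retraction of $X$ onto $C_r$ as an infinite concatenation of the local retractions furnished by the corollary preceding the theorem, one for each term of an exhausting sequence of sublevel sets.

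First I would record the structure of the sublevels. Since $b$ is continuous and quasi-convex, each $C_s=b^{-1}([0,s])$ is closed and totally geodesic, hence convex, and by hypothesis compact; moreover $C_s\subseteq C_{s'}$ for $s\le s'$ and $\bigcup_{s\ge r}C_s=X$. For each $s\ge r$ the lemma on compact convex sets gives $\epsilon(s)>0$ such that every point of the $\epsilon(s)$-neighbourhood $(C_s)_{\epsilon(s)}$ has a unique nearest point in $C_s$. Next I claim there is $\delta(s)>0$ with $C_{s+\delta(s)}\subseteq (C_s)_{\epsilon(s)}$: otherwise for each $n$ one finds $y_n$ with $b(y_n)\to s$, $b(y_n)>s$, and $d(y_n,C_s)\ge\epsilon(s)$; since the $y_n$ lie in the compact set $C_{s+1}$ a subsequence converges to some $y_\infty$, and continuity of $b$ forces $b(y_\infty)=s$, i.e.\ $y_\infty\in C_s$, contradicting $d(y_\infty,C_s)\ge\epsilon(s)$. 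As $C_s\subseteq C_{s+\delta(s)}\subseteq(C_s)_{\epsilon(s)}$ with both sets convex, the corollary yields a strong deformation retract $\rho^s\colon C_{s+\delta(s)}\times[0,1]\to C_{s+\delta(s)}$ with $\rho^s_0=\operatorname{id}$, $\rho^s_1(C_{s+\delta(s)})\subseteq C_s$, and $\rho^s_t$ equal to the identity on $C_s$ for all $t$ (the geodesic from a point of $C_s$ to its nearest point in $C_s$ being constant); hence $\rho^s_t$ fixes $C_{s'}$ for every $s'\le s$.

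Now fix $r=\sigma_0<\sigma_1<\sigma_2<\cdots$ with $\sigma_{i+1}\le\sigma_i+\delta(\sigma_i)$ and $\sigma_i\to\infty$, so $\bigcup_iC_{\sigma_i}=X$, and write $\rho^i$ for the retraction of $C_{\sigma_{i+1}}$ onto $C_{\sigma_i}$. (That one can force $\sigma_i\to\infty$ rests on a compactness argument: if $\sigma_i\nearrow\sigma^*<\infty$ one may use a single $\epsilon^*=\epsilon(\sigma^*)$ for all $C_s$ with $s\le\sigma^*$, since they all lie in one ball, and again pass to a limit of witnesses to reach a contradiction; for $b=b_{x_0}$ this is transparent, as Lemma~\ref{lem:bdry-dist} gives $C_t\subseteq(C_s)_{t-s}$ for $r\le s<t$, so $\delta(s)$ may be taken $\ge\epsilon^*$.) The heart of the proof is to glue the countably many $\rho^i$ into one continuous $F\colon X\times[0,1]\to X$. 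I would let $\rho^i$ act during the time interval $[2^{-i-1},2^{-i}]$, with $\rho^0$ acting last and the intervals accumulating at $0$: put $F(\cdot,0)=\operatorname{id}$ and, for $t\in[2^{-j-1},2^{-j}]$ with $\tau=2^{j+1}t-1$,
\[
 F(x,t)=\rho^j_{\tau}\bigl(\rho^{j+1}_1(\rho^{j+2}_1(\cdots))(x)\bigr).
\]
The inner composition is well defined since it is locally finite: any $x$ lies in some $C_{\sigma_{k+1}}$, on which $\rho^m_1$ is the identity for $m\ge k+1$, so it collapses to $\rho^{j+1}_1\circ\cdots\circ\rho^{k}_1$ applied to $x$ (the identity when $j\ge k$), with value in $C_{\sigma_{j+1}}$, the domain of $\rho^j$. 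One then verifies: $F(\cdot,0)=\operatorname{id}$ and $F(x,1)=\rho^0_1(\cdots)\in C_{\sigma_0}=C_r$; $F(\cdot,t)$ is the identity on $C_r$ for every $t$; the two formulas match at each junction $t=2^{-j}$ because $\rho^j_1\circ(\rho^{j+1}_1\circ\cdots)=\rho^{j-1}_0\circ(\rho^{j}_1\circ\cdots)$; and $F$ is continuous, on $X\times(0,1]$ because locally only finitely many $\rho^i$ are active (continuity of $b$ keeping a neighbourhood inside a fixed $C_{\sigma_{k+1}}$), and along $X\times\{0\}$ because on a neighbourhood $U\times[0,\delta)$ with $U\subseteq C_{\sigma_{k+1}}$ and $\delta<2^{-k-1}$ no retraction has acted and $F$ coincides with the projection to $X$. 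Thus $F$ is a strong deformation retract of $X$ onto $C_r$, each $F(\cdot,t)$ mapping $X$ into a sublevel set.

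For the final assertion I would apply the result — whose proof never uses $r>0$ — when $(X,d)$ is in addition locally compact and uniformly smooth, so that the Cheeger--Gromoll function $b_{x_0}$ is defined, quasi-convex (by the proposition above that Busemann functions, hence Cheeger--Gromoll functions, of a uniformly smooth space are quasi-convex), continuous since $1$-Lipschitz, and has compact sublevels by Lemma~\ref{lem:bdry-dist}; taking the end value to be $\min b_{x_0}$ yields $b_{x_0}^{-1}(\min b_{x_0})=S$. The step I expect to be the main obstacle is the gluing: distributing the infinitely many deformation retracts over time-intervals that pile up at $0$ and checking that $F$ is jointly continuous there, together with the closely related point that the chosen chain of sublevels is cofinal in $X$.
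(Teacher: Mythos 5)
Your proposal is correct and follows essentially the same route as the paper: a cofinal chain of sublevel sets $C_{\sigma_0}\subset C_{\sigma_1}\subset\cdots$ with consecutive inclusions fitting inside the unique-projection neighbourhood radius from the preceding lemma, the local retractions from the preceding corollary, and a gluing of countably many such retractions over a half-open time interval accumulating at the identity end (the paper uses an increasing homeomorphism $\phi:[0,1)\to[r,\infty)$ where you use dyadic intervals $[2^{-j-1},2^{-j}]$, a cosmetic reparameterization). If anything you are a bit more explicit than the paper about why $C_{\sigma_i+\delta(\sigma_i)}\subseteq(C_{\sigma_i})_{\epsilon}$ can be arranged and why the resulting $\sigma_i$ can be forced to tend to infinity, where the paper simply sets $s_{n+1}=s_n+\tfrac12\epsilon(b,s_n)$ and appeals to $\epsilon(b,\cdot)$ being bounded below on compacta.
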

\begin{proof}
The construction is similar to \cite[Section 2]{Cheeger1972}. By
the above lemma any point $x\in C_{s}$ admits a unique projection
point $f_{s,r}^{t}(x)\in C_{(1-t)s+tr}$ if $s\ge r$ and $|s-r|\le\epsilon(b,s)$
where $\epsilon$ is given as above by choosing $R$ such that $C_{s}\subset B_{\frac{R}{2}}(x_{0})$.
Note that $s\mapsto\epsilon(b,s)$ is uniformly bounded away from
zero on $[0,M]$ for all $M>0$.

Now define $s_{0}=r$ and $s_{n+1}=s_{n}+\frac{1}{2}\epsilon(b,s_{n})$.
We claim that $s_{n}\to\infty$ as $n\to\infty$. Indeed, if $s_{n}\to s<\infty$
then $\epsilon(b,s_{n})\to0$ which is a contradiction because $\epsilon(b,s_{n})\ge\epsilon(b,M)$
whenever $s_{n}\le M$. 

Let $t\in[r,\infty)$. We define a family of functions $(f_{n}^{t}:C_{s_{n}}\to C_{s_{n}})_{n\in\mathbb{N}}$
as follows: Set $f_{0}^{t}=\operatorname{id}$ and 
\[
f_{n+1}^{t}=\begin{cases}
f_{n}^{t}\circ f_{s_{n},s_{n-1}} & t\le s_{n-1}\\
f_{s_{n},s_{n-1}}^{\lambda} & t\in[s_{n-1},s_{n}]\:\mbox{ with }\lambda=\frac{t-s_{n-1}}{s_{n}-s_{n-1}}\\
\operatorname{id} & t\ge s_{n}.
\end{cases}
\]
One can verify that $f_{n+1}^{t}$ is continuous. Furthermore, let
$m\le n$ then 
\[
f_{n}^{t}=f_{m}^{t}\quad\mbox{on }C_{s_{m}},t\ge s_{m}
\]
and 
\[
f_{n}^{t}(x)=x\qquad\mbox{if }(x,t)\in C_{s_{m}}\times[s_{m},\infty).
\]

Now let $\phi:[0,1)\to[r,\infty)$ be an increasing homeomorphism.
Then define 
\[
F(x,t)=\begin{cases}
\lim_{n\to\infty}f_{n}^{\phi(t)}(x) & t<1\\
x & t=1.
\end{cases}
\]
Note that $F$ is continuous on $X\times[0,1)$. So let $(x_{n},t_{n})\to(x,1)$
then $x_{n},x\in C_{s_{m}}$ for some large $m$. As $t_{n}\to1$
we have $s_{m}\le\phi(t_{n})$ for $n\ge n_{0}$ and thus 
\[
F(x_{n},t_{n})=f_{m}^{\phi(t_{n})}(x_{n})=x_{n}\to x=f_{m}^{\phi(1)}(x)=F(x,1).
\]
 This shows that $F$ is a strong deformation retract because $f_{n}^{0}(C_{s_{n}})=C_{r}.$
\end{proof}

\subsection{Towards a soul on uniformly smooth spaces}

To complete the soul theorem alone the lines of Cheeger-Gromoll one
needs a proper notion of intrinsic boundary. We say that geodesics
in $(X,d)$ can be \emph{locally extended} if for all geodesics $\gamma:[0,1]\to X$
there is a local geodesic $\tilde{\gamma}:[-\epsilon,1]\to X$ agreeing
on $[0,1]$ with $\gamma$. In the smooth setting, $x\in\partial M$
would imply that there is a geodesic starting at $x$ such that $-\dot{\gamma}_{x}\ne T_{x}M$.
In particular, $\gamma$ cannot be extended beyond $x$. 
\begin{defn}
[geodesic boundary] The \emph{geodesic boundary} $\partial_{g}C$
of subset $C\subset X$ is the set of all $x\in C$ such that there
is a geodesic in $C$ that cannot be locally extended in $C$.
\end{defn}
\begin{rem*}
The notion differs from the boundary of Alexandrov spaces. Indeed,
if the singular set is dense then the geodesic boundary is dense as
well. 
\end{rem*}
The following is a replacement for the notion of closed manifold. 
\begin{defn}
[weak Busemann $G$-space] A geodesic metric space is a weak Busemann
$G$-space if its geodesics can be locally extended in a unique way.
In particular, $\partial_{g}X=\varnothing.$ 
\end{defn}
Unique extendability implies non-branching. The Heisenberg group equipped
with a left-invariant Carnot-Caratheodory metric is a weak Busemann
$G$-space which is not a (strong) Busemann $G$-space. See \cite{Busemann1955}
for more on Busemann $G$-spaces. From Lemma \ref{lem:str-convex-str-inj}
we immediately see that.
\begin{lem}
\label{lem:weak-Busemann-to-strong}A weak Busemann $G$-space with
strict convexity radius locally bounded away from zero is a Busemann
$G$-space.
\end{lem}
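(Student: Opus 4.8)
The plan is to check directly that $(X,d)$ satisfies Busemann's axioms for a $G$-space \cite{Busemann1955}: finite compactness, Menger convexity (betweenness), local prolongability of geodesics, and uniqueness of prolongation. Three of these are immediate from the standing hypotheses together with the definition of a weak Busemann $G$-space: $(X,d)$ is proper, hence finitely compact; it is geodesic, so for $x\neq z$ the midpoint $m$ of a geodesic from $x$ to $z$ satisfies $d(x,m)+d(m,z)=d(x,z)$ and $m\notin\{x,z\}$, which gives Menger convexity; and geodesics can be locally extended, which gives local prolongability. So the whole content of the lemma is the uniqueness of prolongation, and this is where the hypothesis on the strict convexity radius enters.

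First I would invoke Lemma~\ref{lem:str-convex-str-inj}: since the strict convexity radius is locally bounded away from zero, so is the injectivity radius, so every $p$ admits a radius $\rho=\rho(p)>0$ such that any two points of $B_\rho(p)$ are joined by a unique geodesic. Now suppose $x,y,z_1,z_2$ satisfy $d(x,z_i)=d(x,y)+d(y,z_i)$ and $d(y,z_1)=d(y,z_2)$, with $x,y$ in such an injectivity neighbourhood. For $i=1,2$ I would produce a geodesic $\alpha_i$ from $x$ to $z_i$ passing through $y$, obtained by gluing the geodesics $[x,y]$ and $[y,z_i]$, which is minimizing because the lengths add. By local uniqueness the initial pieces $\alpha_1|_{[x,y]}$ and $\alpha_2|_{[x,y]}$ coincide. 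Parametrizing $\alpha_1,\alpha_2$ by arclength on a common interval, let $T$ be the set of parameters at which $\alpha_1$ and $\alpha_2$ agree on the whole initial sub-interval. Then $T$ is nonempty and closed by continuity, and it is open because at its right endpoint $q$ the two curves are local geodesics both extending the common sub-segment of $\alpha_1$ ending at $q$, hence they agree past $q$ by unique local extendability. Thus $T$ is the whole interval and $z_1=z_2$. The case in which $x,y$ do not lie in a single injectivity ball reduces to this one by subdividing along a geodesic and using finite compactness, in the spirit of Busemann's treatment.

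The step I expect to be the real obstacle — indeed the only place where anything is at stake — is this uniqueness of prolongation, and more precisely the observation that unique local extendability of geodesics is by itself \emph{not} sufficient: two prolonging geodesics $\alpha_1,\alpha_2$ need not a priori share their initial portion, so there is no well-defined germ at $y$ to which uniqueness of extension could be applied. It is precisely Lemma~\ref{lem:str-convex-str-inj}, i.e. the local uniqueness of geodesics coming from the convexity-radius hypothesis, that removes this ambiguity; once it is available, the rest is a routine continuation argument using that the agreement set is open and closed. The remaining points, namely that the radii $\rho(p)$ and the lengths of admissible local extensions can be chosen locally uniform, follow from local compactness exactly as in the proof of Lemma~\ref{lem:str-convex-str-inj} and are purely bookkeeping.
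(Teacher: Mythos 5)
Your proposal is correct and matches the paper's (tacit) argument: the paper simply states the lemma as an immediate consequence of Lemma~\ref{lem:str-convex-str-inj}, and you have correctly unpacked this by noting that finite compactness, Menger convexity, and local prolongability are already built into the hypotheses, so the only content is uniqueness of prolongation, which follows once the injectivity radius bound removes the ambiguity in the initial germ at $y$ and unique local extendability plus a standard open-closed continuation argument do the rest. You also rightly flag the danger point — that unique local extendability alone would not suffice without the convexity-radius hypothesis — which is precisely what the Heisenberg example in the paper is meant to illustrate.
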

By \cite[Theorem 1.6]{Cheeger1972} the geodesic boundary above agrees
with the boundary of open convex subsets if $(X,d)$ is a Riemannian
manifold. Their proof also works for Finsler manifolds. In the current
setting this is almost true. 
\begin{lem}
If $(X,d)$ is a weak Busemann $G$-space and $C=\closure(\interior C)$
is convex then 
\[
\partial C=\operatorname{cl}\partial_{g}C.
\]
\end{lem}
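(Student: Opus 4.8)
The plan is to prove the two inclusions $\closure\partial_{g}C\subseteq\partial C$ and $\partial C\subseteq\closure\partial_{g}C$ separately. Both use the unique local extendability of geodesics supplied by the weak Busemann $G$-space hypothesis; the second inclusion additionally uses $C=\closure(\interior C)$ to produce interior points arbitrarily close to a given boundary point. Throughout, $C$ is closed (being a closure), so $\partial C=C\setminus\interior C$, and I will freely reverse geodesics, so that the phrase ``$x$ is an endpoint of a geodesic that does not extend past $x$ inside $C$'' is symmetric in the two endpoints; this is the way I read the definition of $\partial_{g}C$.

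For $\closure\partial_{g}C\subseteq\partial C$: since $\partial C$ is closed it suffices to show $\partial_{g}C\subseteq\partial C$. Let $x\in\partial_{g}C$ and pick a geodesic $\beta\colon[0,1]\to C$ with $\beta(0)=x$ admitting no local-geodesic extension to $[-\epsilon,1]$ inside $C$. By the weak $G$-space property $\beta$ does extend to a local geodesic $\tilde{\beta}\colon[-\epsilon,1]\to X$. If $x$ were in $\interior C$, then continuity of $\tilde{\beta}$ together with openness of $\interior C$ would give $\tilde{\beta}([-\epsilon',0])\subseteq\interior C\subseteq C$ for some small $\epsilon'>0$, so $\tilde{\beta}|_{[-\epsilon',1]}$ would be a local-geodesic extension of $\beta$ inside $C$, contradicting the choice of $\beta$. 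Hence $x\in C\setminus\interior C=\partial C$.

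For $\partial C\subseteq\closure\partial_{g}C$: fix $x\in\partial C$ and $\epsilon>0$. Using $C=\closure(\interior C)$ choose $z\in\interior C$ with $d(x,z)<\epsilon$, and using $x\in\partial C=C\cap\closure(X\setminus C)$ choose $w\notin C$ with $d(x,w)<\epsilon$. Let $\gamma\colon[0,1]\to X$ be a geodesic from $z$ to $w$. Since $C$ is closed and convex, $\gamma^{-1}(C)$ is a closed subinterval of $[0,1]$: if $a<b$ lie in it, then $\gamma|_{[a,b]}$ is a geodesic with endpoints in $C$, hence lies in $C$. It contains a neighborhood of $0$ because $\gamma(0)=z\in\interior C$ and $\interior C$ is open, and it omits $1$ because $\gamma(1)=w\notin C$; therefore it equals $[0,t_{0}]$ with $0<t_{0}<1$. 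Put $y=\gamma(t_{0})$, so $\beta:=\gamma|_{[0,t_{0}]}$ is a nondegenerate geodesic in $C$ ending at $y$, while $\gamma(t)\notin C$ for all $t\in(t_{0},1]$. If $\beta$ extended to a local geodesic past $y$ inside $C$, then uniqueness of local extension in $X$ would force this extension to agree with $\gamma$ on a right neighborhood of $t_{0}$, which is impossible since $\gamma$ leaves $C$ immediately after $t_{0}$. Hence $y\in\partial_{g}C$, and $d(x,y)\le d(x,z)+d(z,y)\le d(x,z)+d(z,w)\le 2d(x,z)+d(x,w)<3\epsilon$. Letting $\epsilon\to0$ yields $x\in\closure\partial_{g}C$.

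The only genuinely load-bearing step is the uniqueness of local geodesic extension: it is exactly what converts ``the (necessarily existing) $X$-extension of $\beta$ leaves $C$'' into ``$\beta$ has no extension inside $C$'', and thereby places $y$ in $\partial_{g}C$. The remaining ingredients — that $\gamma^{-1}(C)$ is a closed subinterval for closed convex $C$, that $t_{0}>0$ because $\interior C$ is open, and the triangle-inequality estimate — are routine. The one delicate point is fixing the precise reading of the definition of $\partial_{g}C$ (the point $x$ is an endpoint of the offending geodesic), which is handled by the reversal of geodesics noted above.
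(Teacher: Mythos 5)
Your proof is correct and follows essentially the same route as the paper's: for the inclusion $\partial C\subseteq\closure\partial_{g}C$ you, like the paper, run a geodesic from an interior point to an exterior point near $x$, use convexity and closedness to identify the exit time $t_{0}$, and place $\gamma(t_{0})$ in $\partial_{g}C$. You merely spell out the step the paper leaves implicit (that uniqueness of local extension in the weak Busemann $G$-space is what prevents $\gamma|_{[0,t_{0}]}$ from extending inside $C$), and you give a complete argument for the easier inclusion, which the paper only asserts.
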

\begin{proof}
The inclusion $\partial_{g}C\subset\partial C$ follows from local
extendability and the fact that $C=\closure(\interior C)$. 

Let $x\in\partial C$ then there are $y_{n}\in\interior C$ and $z_{n}\in X\backslash C$
such that $y_{n},z_{n}\to x$. Furthermore, there is a geodesic $\gamma_{n}:[0,1]\to X$
connecting $y_{n}$ and $z_{n}$. Observe that for any $\epsilon>0$
and sufficiently large $n$ it holds $\gamma_{n}(t)\in B_{\epsilon}(x)$. 

By convexity of $C$ there is a $t_{0}>0$ such that $\gamma_{n}([0,t_{0}])\subset C$
and $\gamma_{n}((t_{0},1])\subset(X\backslash C)$. But this means
$\gamma_{n}(t_{0})\in\partial_{g}C$ proving the claim.
\end{proof}
\begin{defn}
[Gromov non-negative curvature] A weak Busemann $G$-space is said
to be \emph{non-negatively curved in the sense of Gromov} if for all
closed convex sets $C$ the functions 
\[
b_{C}:x\mapsto d(x,\partial_{g}C)
\]
are quasi-concave on $C$, i.e. the superlevels of $b_{C}:C\to[0,\infty)$
are convex. If $b_{C}$ is strictly quasi-concave at $x_{0}$, then
we say the space has \emph{positive curvature in the sense of Gromov
at $x_{0}$}. If it holds for all $x_{0}\in X$ then we just say $(X,d)$
has positive curvature in the sense of Gromov. Furthermore, we say
it has \emph{strong non-negative (positive) curvature} in the sense
of Gromov if $b_{C}$ is convex (strictly convex).
\end{defn}
\begin{rem*}
(1) In Gromov's terminology \cite[p.44]{Gromov1991} this means that
the inward equidistant sets $(\partial C)_{-\epsilon}$ of the convex
hypersurface $\partial C$ remain convex.

(2) If $C=\closure(\interior C)$ then $b_{C}(x)=d(x,\partial C)$
by the lemma above.

(3) The opposite curvature bound is usually called Pedersen convex,
resp. Pedersen non-positive/negative curvature \cite[(36.15)]{Pedersen1952,Busemann1955}.
This property says that the $r$-neighborhood of convex sets remain
convex. It is sometimes called ``has convex capsules''. Note that
this characterization was rediscovered by Gromov \cite[p.44]{Gromov1991}
as ``the outward equidistant sets $(\partial C)_{\epsilon}$ of a
convex convex hypersurface $\partial C$ remain convex''.
\end{rem*}
Note that Gromov non-negative curvature is rather weak. It is trivially
satisfied on weak Busemann $G$-spaces whose only closed convex subset
with non-trivial boundary are geodesic. Indeed, an example is given
by Heisenberg group, see \cite{Monti2005}.

The author wonders if it is possible to define non-negative curvature
in the sense of Gromov only in terms of (local/global) properties
of the metric not relying on sets.

Assume in the following that $(X,d)$ is proper, uniformly smooth
and is non-negatively curved in the sense of Gromov. Furthermore,
we assume $x_{0}\in X$ is fixed and $C_{r}=b_{x_{0}}^{-1}((-\infty,r])$
are the sublevels of the Cheeger-Gromoll function (see above). 
\begin{prop}
\label{prop:pos-curv}If $(X,d)$ has positive curvature in the sense
of Gromov then $S_{x_{0}}$ is a point.
\end{prop}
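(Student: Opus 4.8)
The plan is to derive a contradiction from the existence of two distinct points in $S_{x_{0}}$ by exhibiting one closed convex set $C$ and a nonconstant geodesic inside $C$ along which $x\mapsto d(x,\partial_{g}C)$ is \emph{constant}, in conflict with the strict quasi-concavity forced by positive Gromov curvature. The set to use is $C_{r}:=b_{x_{0}}^{-1}((-\infty,r])$ for an arbitrary fixed $r>m:=\min b_{x_{0}}$. Recall that $b_{x_{0}}$ is $1$-Lipschitz, continuous and quasi-convex (uniform smoothness makes Cheeger-Gromoll functions quasi-convex), has compact sublevels (Lemma~\ref{lem:bdry-dist}) and attains its minimum; hence $S_{x_{0}}=b_{x_{0}}^{-1}(m)$ is a nonempty compact subset of the convex set $C_{r}$, and $C_{r}\neq X$ since $b_{x_{0}}$ grows at unit rate along any ray issuing from $x_{0}$ and is therefore unbounded.

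First I would record two facts about the boundary of $C_{r}$. Since $b_{x_{0}}$ is continuous, every $p\in\partial C_{r}$ has $b_{x_{0}}(p)=r$; and since $(X,d)$ is a weak Busemann $G$-space, a geodesic ending at an interior point of $C_{r}$ can be extended locally inside $\interior C_{r}$, so $\partial_{g}C_{r}\subseteq\partial C_{r}$ and in particular $b_{x_{0}}\equiv r$ on $\partial_{g}C_{r}$. Consequently, for every $z\in S_{x_{0}}$ and every $p\in\partial_{g}C_{r}$ we have $d(z,p)\ge|b_{x_{0}}(z)-b_{x_{0}}(p)|=r-m$, i.e.\ $d(z,\partial_{g}C_{r})\ge r-m$.

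Next I would produce, for each $z\in S_{x_{0}}$, a point of $\partial_{g}C_{r}$ at distance exactly $r-m$. By Lemma~\ref{lem:Cheeger-Gromoll-ray} there is a ray $\gamma_{z}$ emanating from $z$ with $b_{x_{0}}(\gamma_{z}(t))=m+t$; then $\gamma_{z}|_{[0,r-m]}$ is a geodesic contained in $C_{r}$ with endpoint $\gamma_{z}(r-m)$. In a weak Busemann $G$-space this geodesic has a unique local continuation past its endpoint, namely $\gamma_{z}$ itself, and $b_{x_{0}}(\gamma_{z}(r-m+\varepsilon))=r+\varepsilon>r$, so that continuation leaves $C_{r}$; hence $\gamma_{z}|_{[0,r-m]}$ cannot be extended within $C_{r}$ and $\gamma_{z}(r-m)\in\partial_{g}C_{r}$. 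Therefore $d(z,\partial_{g}C_{r})\le r-m$, and combining with the previous step, $d(\cdot,\partial_{g}C_{r})\equiv r-m$ on $S_{x_{0}}$.

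To finish, suppose $x\neq y$ both lie in $S_{x_{0}}$ and let $\sigma:[0,1]\to X$ be a geodesic from $x$ to $y$. Quasi-convexity of $b_{x_{0}}$ together with $b_{x_{0}}\ge m$ and $b_{x_{0}}(x)=b_{x_{0}}(y)=m$ gives $b_{x_{0}}(\sigma_{t})=m$ for all $t$, so $\sigma$ lies in $S_{x_{0}}\subseteq C_{r}$ and, by the previous paragraph, $d(\sigma_{t},\partial_{g}C_{r})=r-m$ for all $t$. But positive curvature in the sense of Gromov makes $d(\cdot,\partial_{g}C_{r})$ strictly quasi-concave on the closed convex set $C_{r}$, which forces $d(\sigma_{t},\partial_{g}C_{r})>\min\{d(x,\partial_{g}C_{r}),d(y,\partial_{g}C_{r})\}=r-m$ for $t\in(0,1)$ --- a contradiction. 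Hence $S_{x_{0}}$ is a single point. I expect the only step needing real care to be the claim $\gamma_{z}(r-m)\in\partial_{g}C_{r}$: one is tempted instead to first establish that $\partial_{g}C_{r}$ is dense in $\partial C_{r}$ (which would require $C_{r}=\closure(\interior C_{r})$ together with the earlier lemma on geodesic boundaries), but it is cleaner, and enough, to locate the needed boundary points explicitly along the Cheeger-Gromoll rays, being careful that ``cannot be extended in $C_{r}$'' is read with respect to the unique local continuation guaranteed by the weak Busemann $G$-space property.
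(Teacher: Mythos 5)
Your proof is correct and follows essentially the same route as the paper: the paper invokes Corollary~\ref{cor:CG-fcn-via-bdry} to write $b_{x_0}=s-b_{C_s}$ on $C_s$, deduces strict quasi-convexity of $b_{x_0}$ from positive Gromov curvature, and concludes via Corollary~\ref{cor:point-soul}, whereas you unroll exactly this reasoning at the level of a single nonconstant geodesic inside $S_{x_0}$. Your version is a bit more careful on one point the paper glosses over: positive Gromov curvature concerns $d(\cdot,\partial_g C_r)$ while Corollary~\ref{cor:CG-fcn-via-bdry} is stated with $d(\cdot,\partial C_r)$, and the paper silently identifies the two (which requires $C_r=\closure(\interior C_r)$ and the lemma $\partial C=\closure\partial_g C$); you sidestep this by producing explicit points of $\partial_g C_r$ via the Cheeger--Gromoll rays and the unique local extendability of a weak Busemann $G$-space, which is both sufficient and cleaner.
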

\begin{proof}
Let $s>\min b_{x_{0}}$. Then Corollary \ref{cor:CG-fcn-via-bdry}
shows that $b_{x_{0}}$ and $s-b_{C_{s}}$ agree on $C_{s}$. Positive
curvature shows that $b_{C_{s}}$ is strictly quasi-concave so by
Corollary \ref{cor:point-soul} the set $S_{x_{0}}$ must be a point.
\end{proof}
Now we want to show that one can successively reduce $S_{x_{0}}$.
For such a reduction we need to assume that the geodesic boundary
does not behave too badly. For this recall that a set $A$ is nowhere
dense in a closed subset $B$ if its closure in $B$ has empty interior
w.r.t. $B$.
\begin{defn}
[non-trivial boundary] A metric space is said to have non-trivial
boundary property if for all non-trivial closed convex set $C$ the
geodesic boundary $\partial_{g}C$ is nowhere dense in $C$.
\end{defn}
By \cite[Theorem 1.6]{Cheeger1972} any Riemannian manifold satisfies
the non-trivial boundary property. 
\begin{lem}
Assume $(X,d)$ has non-trivial boundary property and non-negative
curvature in the sense of Gromov. Then any closed convex set $C$
the set $S=b_{C}^{-1}(\max b_{C})$ is a closed convex subset without
interior w.r.t. $C$. We call $S$ the weak soul of $C$.
\end{lem}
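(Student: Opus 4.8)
The plan is to verify the three assertions about $S=b_{C}^{-1}(\max b_{C})$ — closedness, convexity, and emptiness of the interior relative to $C$ — in turn, observing that the Gromov curvature bound is used only for convexity and the non-trivial boundary property only for the last claim. Throughout, $C$ is assumed non-trivial, so in particular $\partial_{g}C\ne\varnothing$ and, by hypothesis, $\partial_{g}C$ is nowhere dense in $C$. Set $M=\max b_{C}$; if this maximum is not attained (which cannot happen when $C$ is compact, e.g. for the sublevels $C_{r}$ of $b_{x_{0}}$ arising later) then $S=\varnothing$ and there is nothing to prove, so assume it is attained.

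Since $b_{C}=d(\cdot,\partial_{g}C)=d(\cdot,\closure\partial_{g}C)$ is $1$-Lipschitz and hence continuous on $C$, and $C$ is closed in $X$, the set $S=b_{C}^{-1}(\{M\})$ is closed. Because $M$ is the maximum of $b_{C}$, we have $S=\{x\in C\,|\,b_{C}(x)\ge M\}$, i.e. $S$ is the top superlevel set of $b_{C}$; by the definition of non-negative curvature in the sense of Gromov the superlevels of $b_{C}$ are convex, so $S$ is convex (if $p,q\in S$ then the geodesic between them lies in $C$ by convexity of $C$, and along it $b_{C}\ge\min\{M,M\}=M$, hence $\equiv M$).

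The key preliminary step for the interior claim is that $M>0$. If $M=0$ then $b_{C}\equiv0$ on $C$, i.e. every point of $C$ lies in $\closure\partial_{g}C$; since also $\partial_{g}C\subset C$ and $C$ is closed we get $C=\closure\partial_{g}C$, so $\partial_{g}C$ is dense in $C$, contradicting that it is nowhere dense in $C$. Thus $M>0$. Now suppose for contradiction that $x_{0}\in\interior_{C}S$ and fix $\rho>0$ with $B_{\rho}(x_{0})\cap C\subseteq S$. As $X$ is proper and $\closure\partial_{g}C$ is a nonempty closed set, there is $y\in\closure\partial_{g}C$ with $d(x_{0},y)=b_{C}(x_{0})=M$; note $y\ne x_{0}$. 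Since $C$ is closed and convex and $x_{0},y\in C$, a geodesic $\gamma:[0,1]\to X$ from $x_{0}$ to $y$ lies in $C$, and $d(\gamma(t),x_{0})=tM$. For every sufficiently small $t>0$ the point $\gamma(t)$ lies in $B_{\rho}(x_{0})\cap C\subseteq S$, so $b_{C}(\gamma(t))=M$; on the other hand $b_{C}(\gamma(t))=d(\gamma(t),\closure\partial_{g}C)\le d(\gamma(t),y)=(1-t)M<M$, a contradiction. Hence $\interior_{C}S=\varnothing$.

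The main obstacle is exactly the step $M>0$: this is the only place the non-trivial boundary property enters, and it also explains why the statement must be restricted to non-trivial $C$ — when $\partial_{g}C=\varnothing$ (for instance $C=X$ in a weak Busemann $G$-space) one has $b_{C}\equiv+\infty$ and $S=C$, so the conclusion would fail. A minor point requiring care is the attainment of $\max b_{C}$, handled above by the convention that $S=\varnothing$ otherwise; this issue is absent in the applications, where $C$ is one of the compact sublevels $C_{r}$. Note that, pleasantly, convexity of $S$ and emptiness of its interior use disjoint hypotheses, and the interior argument is just the standard fact that a distance function strictly decreases along a minimizing geodesic toward the set.
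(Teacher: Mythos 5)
The paper states this lemma without supplying a proof, so there is no proof of record to compare against. Your argument is correct and cleanly organized: closedness follows from continuity of $b_{C}=d(\cdot,\closure\partial_{g}C)$ and closedness of $C$; convexity is immediate since $S$ is the top superlevel set of $b_{C}$ and the definition of Gromov non-negative curvature makes superlevels convex; and the interior claim is the standard strict-decrease argument along a minimizing geodesic to a nearest point of $\closure\partial_{g}C$, where properness (assumed in this section of the paper) gives a closest point and the non-trivial boundary hypothesis enters precisely to ensure $\max b_{C}>0$, without which the decrease argument has no room. You are also right to flag that the lemma must be read for non-trivial $C$ (i.e.\ with $\partial_{g}C\ne\varnothing$), matching the restriction already built into the definition of the non-trivial boundary property; as you note, otherwise $b_{C}$ is not finite-valued and $S=C$. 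The observation that $S=\varnothing$ whenever $\max b_{C}$ is unattained (and that the statement is then vacuous) is a sensible way to handle non-compact $C$; in the paper's intended application $C$ is one of the compact sublevels $C_{r}$, where this does not arise.
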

If the Cheeger-Gromoll function $b_{x_{0}}$ is seen as minus of the
renormalized distance from the boundary at infinity then uniform smoothness
implies Gromov non-negative curvature in the (very) large. Corollary
\ref{cor:CG-fcn-via-bdry} shows that the weak soul of $S_{x_{0}}$
actually agrees with the weak soul of the sublevel set thus justifying
the terminology.

Given a convex exhaustion function $b$, one can now start with $X$
and obtain a weak soul $C_{0}=S_{x_{0}}$. This set admits a weak
soul $C_{1}$ as well. Repeatedly applying the lemma shows that there
is a flag of closed convex set $C_{0}\supset C_{1}\supset\cdots$.
In order to show that this procedure eventually ends we need the following.
\begin{defn}
[topological dimension] A metric space $(X,d)$ is said to have topological
dimension $n$, denoted by $\dim_{\operatorname{top}}X=n$, if every
open cover $(U_{\alpha})_{\alpha\in I}$ of $X$ admits a cover $(V_{\beta})_{\beta\in J}$
such that for all $\beta\in J$ there is an $\alpha\in I$ with	 $V_{\beta}\subset U_{\alpha}$
and each $x\in X$ is contained in at most $n$ sets $V_{\beta}$.
\end{defn}
\begin{lem}
Let $C$ be convex and $S$ its weak soul. Then
\[
1+\dim_{\operatorname{top}}S\le\dim_{\operatorname{top}}C.
\]
If $S$ contains a geodesic then $\dim_{\operatorname{top}}S>0$.
\end{lem}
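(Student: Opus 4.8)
The plan is to treat the two assertions separately; the second is immediate and the first reduces to constructing a collar of the weak soul.

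For the second assertion, suppose $S$ contains a non-constant geodesic $\gamma\colon[0,L]\to S$. Its constant speed is $d(\gamma(0),\gamma(L))>0$, so $\gamma$ is injective and a homeomorphism onto its image; thus $S$ has a subspace homeomorphic to $[0,1]$. Since $X$ is proper, $S$ is separable metrizable, covering dimension is monotone under subspaces, and $\dim_{\operatorname{top}}[0,1]=1$, so $\dim_{\operatorname{top}}S>0$.

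For the first assertion I would produce a subspace of $C$ homeomorphic to $S\times[0,\delta]$ for some $\delta>0$ and then quote the classical facts that $\dim_{\operatorname{top}}(S\times[0,\delta])=1+\dim_{\operatorname{top}}S$ for a nonempty separable metric space $S$ and that $\dim_{\operatorname{top}}$ is monotone under subspaces (if $\dim_{\operatorname{top}}S=\infty$ there is nothing to prove). Throughout this section $(X,d)$ is proper, uniformly smooth and non-negatively curved in the sense of Gromov, hence non-branching; moreover $C$ is compact (as it occurs in the flag of weak souls inside the compact sublevels of the Cheeger--Gromoll function), and the non-trivial boundary property guarantees $m:=\max b_C>0$, where $b_C=d(\cdot,\partial_{g}C)$ is $1$-Lipschitz and $S=b_C^{-1}(m)$. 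For each $x\in S$ pick, by properness, a foot point $p_x\in\operatorname{cl}\partial_{g}C$ with $d(x,p_x)=b_C(x)=m$ and a geodesic $\alpha_x\colon[0,m]\to C$ from $p_x$ to $x$ (lying in $C$ by convexity); since $b_C(p_x)=0$, $b_C(x)=m$ and $d(p_x,x)=m$, the triangle inequality for the $1$-Lipschitz function $b_C$ forces $b_C(\alpha_x(t))=t$ for all $t\in[0,m]$. Fixing $\delta\in(0,m)$ and setting $\sigma_x(u)=\alpha_x(m-u)$ for $u\in[0,\delta]$, one gets a geodesic emanating from $x$ with $b_C(\sigma_x(u))=m-u$, so $\sigma_x(u)\notin S$ for $u>0$, and I put $\Psi(x,u)=\sigma_x(u)$.

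It remains to verify that $\Psi\colon S\times[0,\delta]\to C$ is a continuous injection; since its domain is compact and $C$ is Hausdorff it is then a homeomorphism onto its image, and the proof is complete. For injectivity, if $\Psi(x,u)=\Psi(x',u')=:q$ then comparing $b_C$-values gives $u=u'$, and $q$ lies at distance $u$ from $S$ with a unique nearest point in $S$ --- established exactly as the corresponding uniqueness statement in Corollary \ref{cor:subspace-contraction}, from non-branching and compactness of $S$ --- so that $x$ and $x'$ both equal this nearest point. Continuity of $\Psi$ reduces to a continuous selection of the outward geodesics $\sigma_x$, obtained from local compactness as in the same corollary. The main obstacle is precisely this verification: establishing uniqueness of the nearest point of $S$ for points just outside $S$, and the continuous dependence of the collar geodesics on the base point --- i.e. the analogue for a general convex set $C$ of the collar construction behind Corollary \ref{cor:subspace-contraction}, which I would carry out by the same argument.
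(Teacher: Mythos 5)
Your treatment of the second assertion matches the paper: a non-constant geodesic in $S$ gives a subspace homeomorphic to $[0,1]$, hence $\dim_{\operatorname{top}}S\ge1$. The first assertion, however, takes a route that does not work as stated, and the difficulty is exactly the one you flag at the end but then dismiss by appeal to Corollary \ref{cor:subspace-contraction}.

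You want a continuous injection $\Psi\colon S\times[0,\delta]\to C$ built from ``outward'' geodesics $\sigma_x$ emanating from each $x\in S$ into $C$. But such a geodesic is obtained by choosing a foot point $p_x\in\operatorname{cl}\partial_{g}C$, and this choice is typically non-unique and cannot be made continuously in $x$: already for a flat rectangle $C=[0,3]\times[0,1]$ the weak soul is the inner segment $S=[1/2,5/2]\times\{1/2\}$, and every $x$ in the interior of $S$ has two antipodal foot points on $\partial C$, so $\sigma_x$ is genuinely two-valued; worse examples (a ball, a solid cylinder) give a whole sphere of foot points at each $x$. There is no canonical collar of $S$ pointing outward. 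Corollary \ref{cor:subspace-contraction} does not supply one either: its uniqueness and continuity statements run \emph{inward}, from a level set towards $S$ --- for $z\in A_r$ the foot point $y_z\in S$ is unique because the ray from $z$ and the geodesic $z\to y_z$ glue to a single geodesic that non-branching rigidifies; this argument has no outward analogue, since the geodesics leaving $x\in S$ towards $\partial_{g}C$ need not agree on any initial interval.

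The paper's proof sidesteps this by never attempting to embed $S\times[0,\delta]$. It instead takes the subset $A_r$ of the level set $\partial C_r$ consisting of points realizing the maximal distance to $S$, embeds $A_r\times[0,1]$ into $C$ along the well-defined inward geodesics, and uses the continuous but generally non-injective nearest-point map $A_r\to S$ (which is \emph{onto}, since every $y\in S$ lies on a length-$m$ geodesic to $\partial_g C$ whose point at level $r$ belongs to $A_r$). Then $1+\dim_{\operatorname{top}}S\le1+\dim_{\operatorname{top}}A_r=\dim_{\operatorname{top}}(A_r\times[0,1])\le\dim_{\operatorname{top}}C$. Monotonicity of topological dimension under continuous surjections between compact metric spaces is what is being used in place of the product embedding you attempted; this is the idea your proposal is missing.
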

\begin{proof}
This follows along the lines of Corollary \ref{cor:subspace-contraction}.
Indeed, look at the sets $A_{r}\subset\partial C_{r}$ such that 
\[
d(x,S)=r\quad\mbox{ for all }x\in A_{r}
\]
where $C_{r}=b_{C}^{-1}([r,\infty))$.

Then as in Corollary \ref{cor:subspace-contraction} one can show
that sets $A_{r}\times[0,1]$ is homeomorphic to a subset of $C$.
In particular, it holds 
\begin{eqnarray*}
1+\dim_{\operatorname{top}}A_{r} & = & \dim_{\operatorname{top}}A_{r}\times[0,1]\\
 & \le & \dim_{\operatorname{top}}C.
\end{eqnarray*}
Also observe that there is a continuous map of $A_{r}$ onto $S$
so that $\dim_{\operatorname{top}}A_{r}\ge\dim_{\operatorname{top}}S$.

The last statement follows as an embedded line has topological dimension
$1$.
\end{proof}
\begin{cor}
Any finite dimensional, uniformly smooth metric space $(X,d)$ of
non-negative curvature in the sense of Gromov and non-trivial boundary
property admits closed convex set $S$ such that $\partial_{g}S=\varnothing$.
We call $S$ a soul of $X$.
\end{cor}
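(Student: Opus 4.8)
The plan is to construct the soul by iterating the weak–soul construction, using the topological dimension bound as the termination mechanism.

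First I would produce the starting convex set. Since $(X,d)$ is proper it is locally compact, so the theorem on uniformly smooth locally compact spaces yields a quasi-convex exhaustion function $b=b_{x_{0}}$ with compact sublevels whose minimum set $S_{x_{0}}=b^{-1}(\min b)$ has empty interior. Being $1$-Lipschitz, $b$ is continuous, so $S_{x_{0}}$ is closed; being a sublevel set of a quasi-convex function it is totally geodesic, hence convex. Put $C_{0}=S_{x_{0}}$: this is a nonempty compact convex subset of $X$, and as a (closed) subspace of the finite-dimensional space $X$ it satisfies $\dim_{\operatorname{top}}C_{0}\le\dim_{\operatorname{top}}X=n<\infty$.

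Next I would run the induction. Assume $C_{i}$ is a nonempty compact convex subset of $X$ with $\dim_{\operatorname{top}}C_{i}\le n$, and equip it with the induced metric. Because $C_{i}$ is convex, its induced metric is a length metric and its geodesics, midpoints, convex subsets, geodesic boundaries, and the associated functions $b_{C'}$ for $C'\subset C_{i}$ all coincide with the corresponding objects computed in $X$. Consequently $C_{i}$ is again uniformly smooth, has non-negative curvature in the sense of Gromov, and has the non-trivial boundary property. If $\partial_{g}C_{i}=\varnothing$ I stop and set $S=C_{i}$. Otherwise the non-trivial boundary property forces $b_{C_{i}}=d(\cdot,\partial_{g}C_{i})$ to be non-constant (it vanishes on $\partial_{g}C_{i}$ but $\partial_{g}C_{i}$ is nowhere dense in $C_{i}$), so the weak–soul lemma applied inside $C_{i}$ gives a nonempty, compact, closed convex proper subset $C_{i+1}:=b_{C_{i}}^{-1}(\max b_{C_{i}})$ with empty interior relative to $C_{i}$, and the topological dimension lemma (again applied inside $C_{i}$, whose geodesic boundary is nonempty) yields $1+\dim_{\operatorname{top}}C_{i+1}\le\dim_{\operatorname{top}}C_{i}$, i.e. $\dim_{\operatorname{top}}C_{i+1}\le\dim_{\operatorname{top}}C_{i}-1$. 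Then I repeat the step with $C_{i+1}$ in place of $C_{i}$.

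Finally I would argue that the process stops. The integers $\dim_{\operatorname{top}}C_{0}\ge\dim_{\operatorname{top}}C_{1}\ge\cdots$ strictly decrease at every step at which $\partial_{g}C_{i}\neq\varnothing$, while each $C_{i}$ is nonempty and hence has $\dim_{\operatorname{top}}C_{i}\ge 0$; so after at most $n+1$ steps one reaches a $C_{k}$ with $\partial_{g}C_{k}=\varnothing$, and $S:=C_{k}$ is a closed convex subset of $X$ with empty geodesic boundary, i.e. a soul. The main obstacle is the bookkeeping needed to justify that uniform smoothness, Gromov non-negative curvature and the non-trivial boundary property all descend to convex subsets with the induced metric — and, relatedly, that the weak–soul lemma and the dimension lemma are being applied \emph{intrinsically} in $C_{i}$, so that "interior" and "boundary" there mean relative to $C_{i}$, not to $X$; this is also where the non-trivial boundary property does the real work, ensuring $b_{C_{i}}$ is non-constant so that the equidistant sets $A_{r}$ used in the proof of the dimension lemma genuinely exist over a range of $r$ and force a strict drop in dimension.
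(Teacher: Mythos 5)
Your proof is correct and follows exactly the argument the paper sketches in the surrounding text: start from $C_{0}=S_{x_{0}}$, iterate the weak–soul construction, and use the lemma $1+\dim_{\operatorname{top}}S\le\dim_{\operatorname{top}}C$ to force termination after finitely many steps. You also correctly spell out the two points the paper leaves implicit — that the hypotheses descend intrinsically to each convex $C_{i}$, and that the non-trivial boundary property is what makes $b_{C_{i}}$ non-constant so the dimension actually drops.
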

Note that the construction of the deformation retracts also works
inside of convex sets when replacing the Cheeger-Gromoll exhaustion
$b_{x_{0}}$ by $b_{C}$. Thus we may summarize the results above
as follows. 
\begin{thm}
Assume $(X,d)$ is uniformly smooth, strong non-negatively curved
in the sense of Gromov and has strict convexity radius locally bounded
away from zero. Then there is a strong deformation retract onto a
closed convex set $S$ with $\partial_{g}S=\varnothing$.
\end{thm}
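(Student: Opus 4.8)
The plan is to stack three constructions already in place: the Cheeger-Gromoll exhaustion produced by uniform smoothness, the deformation-retract machinery fed by the bound on the strict convexity radius, and the iterated weak-soul reduction driven by (strong) non-negative curvature in the sense of Gromov.

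First fix $x_{0}\in X$. As $(X,d)$ is proper it is locally compact, so the theorem on uniformly smooth locally compact spaces yields a continuous quasi-convex exhaustion $b:=b_{x_{0}}$ (normalised to be nonnegative) with compact sublevels $C_{r}=b^{-1}([0,r])$ and with $S_{x_{0}}=b^{-1}(\min b)$ of empty interior. Since the strict convexity radius is locally bounded away from zero, the theorem of the section ``A strong deformation retract onto a weak soul'' applies to $b$, so each $C_{r}$ is a strong deformation retract of $X$ and in particular $X$ strong-deformation-retracts onto the compact set $S_{x_{0}}$. Strong non-negative curvature in the sense of Gromov says that $b_{C}=d(\cdot,\partial_{g}C)$ is convex on every closed convex $C$; applied through Corollary \ref{cor:CG-fcn-via-bdry}, which identifies $b$ with $r-b_{C_{r}}$ on $C_{r}$, this upgrades $b$ to a convex function, so all $C_{r}$ and $S_{x_{0}}$ are genuinely convex.

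It remains to replace $S_{x_{0}}$ by a set with empty geodesic boundary. Put $C_{0}=S_{x_{0}}$ and inductively let $C_{i+1}$ be the weak soul $b_{C_{i}}^{-1}(\max b_{C_{i}})$ of $C_{i}$; by the weak-soul lemma (applied to the closed convex subsets $C_{i}\subset X$) each $C_{i+1}$ is closed, convex, and of empty interior in $C_{i}$. Each $C_{i}$ is compact and, with the induced metric, inherits uniform smoothness and the bound on the strict convexity radius from $X$, because midpoints and balls relative to a convex subset are exactly the traces of those in $X$. Since $b_{C_{i}}$ is convex, $\max b_{C_{i}}-b_{C_{i}}$ is a nonnegative continuous quasi-convex function on $C_{i}$ with compact sublevels; as noted after the soul corollary the deformation-retract construction goes through verbatim inside any convex set carrying such a function, so $C_{i}$ strong-deformation-retracts onto $C_{i+1}$. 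Concatenating the resulting homotopies (finitely many, once the iteration is seen to terminate, and with the reparametrisation used above to splice strong deformation retracts) produces a strong deformation retract of $X$ onto the terminal set $S:=C_{N}$, which by construction is closed, convex, and satisfies $\partial_{g}S=\varnothing$.

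The main obstacle is termination of the iteration. This is where a finiteness input is needed: the dimension lemma gives $1+\dim_{\operatorname{top}}C_{i+1}\le\dim_{\operatorname{top}}C_{i}$ as long as $C_{i+1}\subsetneq C_{i}$ still has non-trivial geodesic boundary, so a strictly descending flag $C_{0}\supsetneq C_{1}\supsetneq\cdots$ is finite once $\dim_{\operatorname{top}}X<\infty$, and the process halts at the first $C_{N}$ with $\partial_{g}C_{N}=\varnothing$. A second point to secure at each stage is the ``non-trivial boundary property'' required by the weak-soul lemma: I would derive it from the hypotheses by noting that convexity and nonnegativity of $b_{C_{i}}$ force its zero set $\partial_{g}C_{i}$ to be a convex subset of $C_{i}$, properly contained in $C_{i}$ whenever $C_{i}$ is nondegenerate, and then use the bound on the strict convexity radius to exclude such a proper convex set from having interior in $C_{i}$; alternatively one carries this as a standing hypothesis, as in the preceding corollary. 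The remaining bookkeeping, namely that the spliced homotopies assemble into one continuous map $F:X\times[0,1]\to X$ stationary on $S$, is routine and formally identical to the deformation-retract proofs above.
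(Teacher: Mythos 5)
Your overall strategy is exactly the route the paper takes: Cheeger--Gromoll exhaustion from uniform smoothness, deformation retract onto a sublevel via the strict convexity radius bound, and a terminating iteration of weak souls via the dimension lemma. However, you leave the decisive finiteness step as an unresolved input. You write that the descending flag $C_{0}\supsetneq C_{1}\supsetneq\cdots$ terminates ``once $\dim_{\operatorname{top}}X<\infty$,'' but you never derive finite topological dimension from the stated hypotheses — and without it the argument does not close. The paper supplies this in the sentence immediately after the theorem: the Gromov curvature hypothesis is formulated for weak Busemann $G$-spaces, the strict convexity radius bound upgrades a weak Busemann $G$-space to an honest Busemann $G$-space (Lemma \ref{lem:weak-Busemann-to-strong}), and then Berestovskii's theorem forces finite dimensionality. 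That chain is the missing link in your proposal and should be made explicit.

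Two smaller points. First, your ``upgrade $b$ to a convex function'' step is internally inconsistent: if $b_{C_{r}}$ is convex then $b=r-b_{C_{r}}$ is concave, not convex. (The relevant sign is that $b_{C}$ should be concave for non-negative curvature, so that $r-b_{C_{r}}$ becomes convex; this is only a repetition of the paper's typo, and fortunately the step is not actually used, since quasi-convexity of $b_{x_{0}}$ already gives convex sublevels.) Second, your sketched derivation of the non-trivial boundary property (convexity of the zero set of $b_{C_i}$ plus the strict convexity radius bound excluding interior) does not hold up as written — there is no obvious reason the strict convexity radius bound forbids a proper convex subset from having interior. The paper is itself silent about why this property holds under the theorem's hypotheses, so this is a point you could legitimately flag as a standing assumption, but it should not be presented as something you have established.
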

Note that by Lemma \ref{lem:weak-Busemann-to-strong} and \cite{Berestovskii1977}
the assumptions imply that $(X,d)$ is finite dimensional.

\bibliographystyle{amsalpha}
\bibliography{bib}

\end{document}